\def\be{\begin{equation}}
\def\ee{\end{equation}}
\def\ba{\begin{align}}
\def\ea{\end{align}}
\def\bsplit{\begin{split}}
\def\esplit{\end{split}}
\def\bm{\begin{multline}}
\def\eem{\end{multline}}
\def\bfig{\begin{figure}[htb]}
\def\efig{\end{figure}}
  \def\tagform@#1{\maketag@@@{\footnotesize{(#1)}\@@italiccorr}}
\renewcommand{\eqref}[1]{(\ref{#1})}
\numberwithin{equation}{section}
\newtheorem{theorem}{Theorem}[section]
\newtheorem{proposition}[theorem]{Proposition}
\newtheorem{lemma}[theorem]{Lemma}
\newtheorem{assumption}{Assumption}
\theoremstyle{remark}
\newtheorem*{remark}{Remark}
\renewcommand{\leq}{\;\leqslant\;}
\renewcommand{\geq}{\;\geqslant\;}
\newcommand{\dd}{{\rm d}}
\newcommand{\e}[1]{\,{\rm e}^{#1}\,}
\newcommand{\ii}{{\rm i}}
\newcommand{\sumtwo}[2]{\sum_{\substack{#1 \\ #2}}}
\newcommand{\limtwo}[2]{\lim_{\substack{#1 \\ #2}}}
\newcommand{\limsuptwo}[2]{\limsup_{\substack{#1 \\ #2}}}
\newcommand{\todi}{\; \substack{{\rm d} \\ \longrightarrow \\ \phantom{d}} \;}
\def\eps{\varepsilon}
\newcommand{\caH}{{\mathcal H}}\newcommand{\caN}{{\mathcal N}}\newcommand{\caS}{{\mathcal S}}
\newcommand{\bbE}{{\mathbb E}}\newcommand{\bbN}{{\mathbb N}}\newcommand{\bbP}{{\mathbb P}}\newcommand{\bbR}{{\mathbb R}}\newcommand{\bbZ}{{\mathbb Z}}
\newcommand{\bsa}{{\boldsymbol a}}\newcommand{\bsr}{{\boldsymbol r}}
\newcommand{\bseta}{{\boldsymbol\eta}}
\newcommand{\bslambda}{{\boldsymbol\lambda}}
\newcommand{\bstheta}{{\boldsymbol\theta}}
\begin{document}

{\hfill\small Electron. J. Probab. 19 (2014), no. 82, 1--37. DOI: 10.1214/EJP.v19-3244} \vspace{5mm}

\title[Random partitions in statistical mechanics]{Random partitions in statistical mechanics}

\author[Ercolani, Jansen, Ueltschi]{Nicholas M.\ Ercolani, Sabine Jansen, Daniel Ueltschi}

\address{Nicholas M.\ Ercolani \hfill\newline
\indent Department of Mathematics, The University of Arizona \hfill\newline
\indent 617 N.\ Santa Rita Ave., P.O.\ Box 210089 \hfill\newline
\indent Tucson, AZ 85721--0089, USA \hfill\newline
{\small\rm\indent http://math.arizona.edu/$\sim$ercolani/}
}
\email{ercolani@math.arizona.edu}

\address{Sabine Jansen \hfill\newline
\indent Ruhr-Universit{\"a}t Bochum \hfill\newline
\indent Universit\"atsstr.\ 150, 44780 Bochum, Germany 
}
\email{sabine.jansen@ruhr-uni-bochum.de}

\address{Daniel Ueltschi \hfill\newline
\indent Department of Mathematics, University of Warwick \hfill\newline
\indent Coventry, CV4 7AL, United Kingdom \hfill\newline
{\small\rm\indent http://www.ueltschi.org}
}
\email{daniel@ueltschi.org}

\begin{abstract}
We consider a family of distributions on spatial random partitions that provide a coupling between different models of interest: the ideal Bose gas; the zero-range process; particle clustering; and spatial permutations. These distributions are invariant for a ``chain of Chinese restaurants'' stochastic  process. We obtain results for the distribution of the size of the largest component.
\end{abstract}

\maketitle

{\footnotesize {\it Keywords:} Spatial random partitions, Bose--Einstein condensation, (inhomogeneous) zero-range process, chain of Chinese restaurants, sums of independent random variables, heavy-tailed variables, infinitely divisible laws.}

{\footnotesize 2010 Math.\ Subj.\ Class.: 60F05, 60K35, 82B05.}


\tableofcontents

\section{Introduction}

We study systems of random integer partitions that are independent except for a global constraint on their total mass. Such a setting appears, directly or indirectly, in diverse systems of statistical mechanics: the ideal quantum Bose gas, the zero-range process, particle clustering, certain coagulation-fragmentations processes, and some models of spatial permutations. A common feature is the possibility of a Bose--Einstein condensation; namely, under some conditions, a phase transition takes place that is accompanied with the formation of large objects. In the language of probability, a single random variable realizes the large deviation required to satisfy the constraint on its sum; this behavior is a well-known hallmark for sums of heavy-tailed random variables, and in fact many of our results can be read as abstract results for (conditioned) sums of independent random variables.

We introduce the setting in Section \ref{sec spart}. The random objects are ``spatial partitions'', that is, collections of integer partitions indexed by locations. The distribution has a product structure subject to a global constraint. Two marginals play an important r\^ole. The first marginal deals with the site occupation numbers; the resulting distribution is that of the ideal Bose gas or of the zero-range process. The second marginal deals with integer partitions; the resulting distribution is that of the particle clustering and of spatial permutations. The present study originated in an attempt at unifying the latter two systems, and the links with the former systems were rather unexpected. It is useful to establish connections since many results and properties of one system can be transferred to the others.

The special cases are described in Section \ref{sec:examples}. The ideal Bose gas can be found in Section \ref{sec ideal Bose}, the zero-range process in Section \ref{sec zero-range}, particle clustering in Section \ref{sec:droplets}, coagulation-fragmentations processes in Section \ref{sec:becker-doering}, and spatial permutations in Section \ref{sec spatial permutations}.

The measures considered in this article are invariant measures for interesting Markov processes. One process represents customers in a ``chain of Chinese restaurants'' which combines the usual Chinese restaurant process with the zero-range process. It is described in Section \ref{sec Chinese}. This actually holds only when the parameters satisfy certain consistency properties. Another process is a coagulation-fragmentation process which is a variant of the Becker--D\"oring model, see Section \ref{sec:coag-frag}.

The possible occurrence of Bose--Einstein condensation and its consequences are addressed in Section \ref{sec rv}. The relevant asymptotic is the thermodynamic limit of statistical mechanics, and the critical density is given by an explicit formula. We study more specific settings in the last two sections, namely, the case of the trap potential in Section \ref{sec:traps} and the case of the square potential in Section \ref{sec:square}.

\section{Random spatial partitions}
\label{sec spart}

\subsection{Setting}

An \emph{integer partition}\footnote{When there is no risk of confusion with \emph{set partitions}, we will drop the word ``integer'' in front of  ``partition''.} $\lambda$ of the integer $n\in \bbN$ is a finite decreasing sequence $\lambda_1\geq \lambda_2\geq \cdots \geq \lambda_k \geq 1$, of varying length $k$, whose elements add up to $n$: $\sum_{j=1}^k \lambda_j =n$; one often writes ``$\lambda \vdash n$''.  We refer to the length $k$ of the sequence as the \emph{number of components} of the partition $\lambda$. Every partition is uniquely determined by the numbers $r_j(\lambda) = \#\{ i =1,\ldots,k\mid \lambda_i = j\}$, $j\in \bbN$, which count how many times a given integer $j$ appears in the partition $\lambda$; they are often called \emph{occupation numbers} of the partition. We also define the partition of $n=0$ as the empty sequence (length $0$, all occupation numbers equal to $0$).  

We are interested in random integer partitions that have additional structure.  A \emph{spatial partition} of the integer $n$ is a collection $\bslambda = (\lambda_x)_{x\in \bbZ^d}$ of  integer partitions at each site $x \in \bbZ^{d}$. That is, each $\lambda_{x}$ is a $k$-tuple $(\lambda_{x1}, \lambda_{x2}, \dots, \lambda_{xk})$, where $k$ varies and where the integers $\lambda_{xj}$ satisfy
\be
           \lambda_{x1} \geq \lambda_{x2} \geq \dots \geq \lambda_{xk} \geq 1
\ee
(and we allow for the ``empty'' sequence with $k=0$). The spatial partition $\boldsymbol{\lambda}$ is uniquely determined by the 
numbers $r_{xj}$ that count how many times a given integer $j$ appears in the partition at site $x$, 
\be
     	r_{xj} = \#\{ i = 1,2,\dots : \lambda_{xi} = j \}.
\ee
We can form two different types of marginals, summing over integers $j$ or sites $x$; this gives rise to two different types of occupation numbers. We let $\bsr = (r_{j})_{j\geq1}$ denote the sums over all the sites, i.e., $r_{j} = \sum_{x\in\bbZ^{d}} r_{xj}$. The site occupation numbers $\bseta = (\eta_{x})_{x\in\bbZ^{d}}$ are given by
\be
           \eta_{x} = \sum_{i\geq1} \lambda_{xi} = \sum_{j\geq1} j r_{xj}.
\ee
Thus for every $x$, $\lambda_x$ is a partition of the integer $\eta_x$. Sometimes this aspect is stressed and one calls $\bslambda$ a \emph{vector partition} of the vector $\bseta$, written $\bslambda\vdash \bseta$ 
(see e.g.~\cite{vershik96}). 

\begin{figure}[htb]
\begin{picture}(0,0)%
\includegraphics{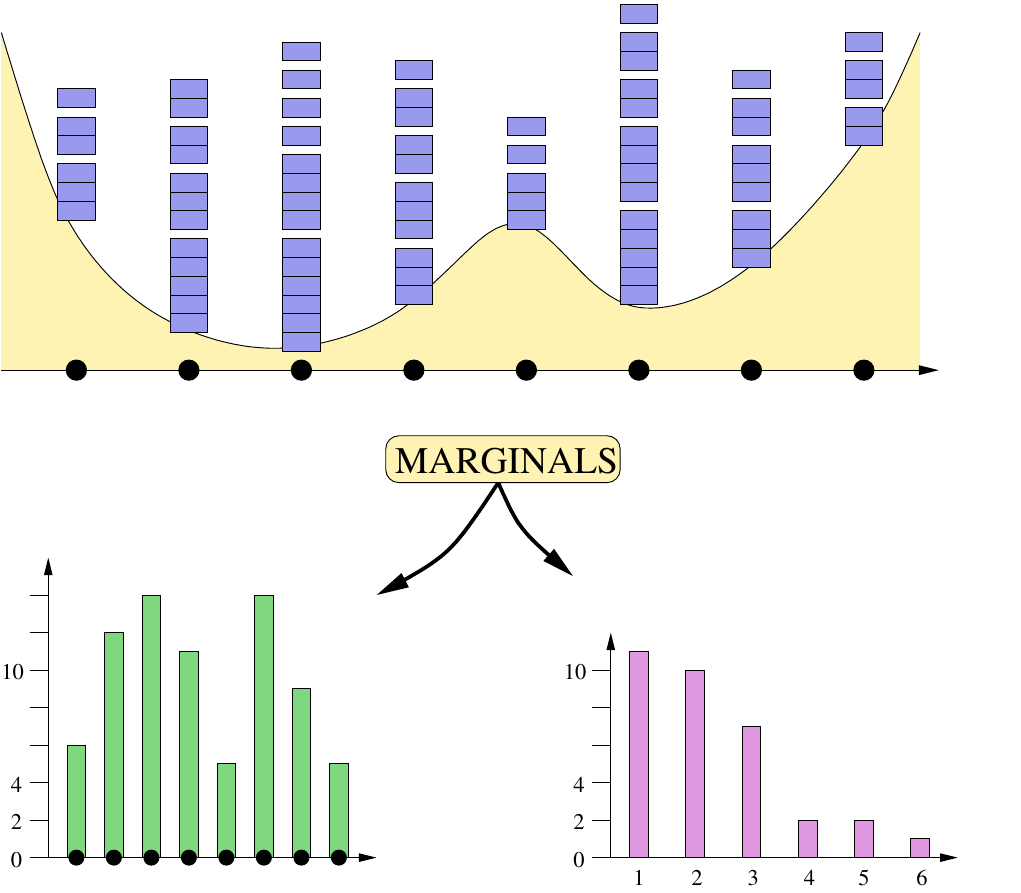}
\end{picture}%
\setlength{\unitlength}{2368sp}%
\begingroup\makeatletter\ifx\SetFigFont\undefined%
\gdef\SetFigFont#1#2#3#4#5{%
  \reset@font\fontsize{#1}{#2pt}%
  \fontfamily{#3}\fontseries{#4}\fontshape{#5}%
  \selectfont}%
\fi\endgroup%
\begin{picture}(8144,7116)(1189,-6640)
\put(4276,-6436){\makebox(0,0)[lb]{\smash{{\SetFigFont{8}{9.6}{\rmdefault}{\mddefault}{\updefault}{\color[rgb]{0,0,0}$\bbZ^d$}%
}}}}
\put(8926,-6436){\makebox(0,0)[lb]{\smash{{\SetFigFont{8}{9.6}{\rmdefault}{\mddefault}{\updefault}{\color[rgb]{0,0,0}$j$}%
}}}}
\put(5926,-4411){\makebox(0,0)[lb]{\smash{{\SetFigFont{8}{9.6}{\rmdefault}{\mddefault}{\updefault}{\color[rgb]{0,0,0}$r_j$}%
}}}}
\put(2926,-1786){\makebox(0,0)[lb]{\smash{{\SetFigFont{7}{8.4}{\rmdefault}{\mddefault}{\updefault}{\color[rgb]{0,0,0}$\lambda_{x1}$}%
}}}}
\put(2926,-1186){\makebox(0,0)[lb]{\smash{{\SetFigFont{7}{8.4}{\rmdefault}{\mddefault}{\updefault}{\color[rgb]{0,0,0}$\lambda_{x2}$}%
}}}}
\put(2926,-736){\makebox(0,0)[lb]{\smash{{\SetFigFont{7}{8.4}{\rmdefault}{\mddefault}{\updefault}{\color[rgb]{0,0,0}$\lambda_{x3}$}%
}}}}
\put(2926,-361){\makebox(0,0)[lb]{\smash{{\SetFigFont{7}{8.4}{\rmdefault}{\mddefault}{\updefault}{\color[rgb]{0,0,0}$\lambda_{x4}$}%
}}}}
\put(8626,164){\makebox(0,0)[lb]{\smash{{\SetFigFont{10}{12.0}{\rmdefault}{\mddefault}{\updefault}{\color[rgb]{0,0,0}$V(x)$}%
}}}}
\put(8776,-2536){\makebox(0,0)[lb]{\smash{{\SetFigFont{10}{12.0}{\rmdefault}{\mddefault}{\updefault}{\color[rgb]{0,0,0}$\bbZ^d$}%
}}}}
\put(4426,-3811){\makebox(0,0)[lb]{\smash{{\SetFigFont{10}{12.0}{\rmdefault}{\mddefault}{\updefault}{\color[rgb]{0,0,0}$\pi_1$}%
}}}}
\put(5551,-3736){\makebox(0,0)[lb]{\smash{{\SetFigFont{10}{12.0}{\rmdefault}{\mddefault}{\updefault}{\color[rgb]{0,0,0}$\pi_2$}%
}}}}
\put(1726,-4036){\makebox(0,0)[lb]{\smash{{\SetFigFont{8}{9.6}{\rmdefault}{\mddefault}{\updefault}{\color[rgb]{0,0,0}$\eta_x$}%
}}}}
\put(2626,-2761){\makebox(0,0)[lb]{\smash{{\SetFigFont{10}{12.0}{\rmdefault}{\mddefault}{\updefault}{\color[rgb]{0,0,0}$x$}%
}}}}
\put(2026,-6586){\makebox(0,0)[lb]{\smash{{\SetFigFont{8}{9.6}{\rmdefault}{\mddefault}{\updefault}{\color[rgb]{0,0,0}$x$}%
}}}}
\end{picture}%
\caption{\small A schematic illustration of spatial partitions and the two relevant marginals.}
\label{fig spatial part}
\end{figure}

These definitions are illustrated in Figure~\ref{fig spatial part}. The intuition is as follows: we think of $x\in \bbZ^d$ as sites in space (hence the name \emph{spatial} partitions). ``Space'' and ``site" should be taken loosely --- $x$ can be a particle position,  a moment vector $x=k$ in a Fourier transformed picture, or a label for an energy level, see the examples and Table~\ref{table:corr}  in Section~\ref{sec:examples}. Let  $\Lambda_{n}$ denote the set of spatial partitions such that 
\be 
	\sum_{x\in\bbZ^{d}, i\geq1} \lambda_{xi} = \sum_{x\in \bbZ^d,\, j\geq1} j r_{xj} = \sum_{x\in \bbZ^d} \eta_x = \sum_{j\geq 1}  j r_j =n. 
\ee
In the language of statistical mechanics, $\Lambda_n$ is a canonical configuration space with total particle number $n$. Notice that $\Lambda_{n}$ is a countable set. For later purpose we also define $\mathcal{N}_n \subset \bbN_0^{\bbZ}$ as the set of $\bseta$'s with $\sum_{x\in \bbZ^d} \eta_x =n$, and $\mathcal{R}_n \subset \bbN_0^{\bbN}$ as the set of $\bsr$'s with $\sum_{j \in \bbN} r_j = n$. Let $\pi_1: \Lambda_n \to \mathcal{N}_n$, $\bslambda \mapsto \bsr(\bslambda)$ and $\pi_2: \Lambda_n\to \mathcal{R}_n$, $\bslambda \mapsto \bseta(\bslambda)$ be the natural projections. 

Apart from the space dimension $d$, the relevant parameters for our probability distribution on $\Lambda_{n}$ are the following:
\begin{itemize}
	\item[(i)] A potential function $V : \bbR^{d} \to (-\infty, \infty]$.
	\item[(ii)] A parameter $\rho \in [0,\infty)$ which represents the density of the system.
		We set $L^{d} = n/\rho$. 
	\item[(iii)] A sequence of non-negative parameters $\bstheta = (\theta_{j})_{j\geq1}$.
\end{itemize}
The probability distribution on $\Lambda_{n}$ is defined as
\be
  	\label{def main prob}
	\bbP_{L,n}(\bslambda) = \frac1{Z_{L,n}} \prod_{x\in\bbZ^{d}} \prod_{j\geq1} \frac1{r_{xj}(\bslambda)!} \biggl( \frac{\theta_{j}}j \e{-j V(x/L)} \biggr)^{r_{xj}(\bslambda)}.
\ee

The number $Z_{L,n}$ is the normalization; it actually depends on $V$ and $\bstheta$, but we alleviate the notation by neglecting to make it explicit. We assume that $Z_{L,n}<\infty$ The relevant asymptotic is the thermodynamic limit where both $n$ and $L$ tend to infinity, with $L$ such that $n = \rho L^{d}$. We will propose different interpretations of the measure $\bbP_{L,n}$ later. One such interpretation is that $\bbP_{L,n}$ is a canonical Gibbs measure for particles moving in the trap potential $V(x/L)$, forming groups at each site. Particles from different groups do not interact; 
 the parameter $\theta_j$ is a Boltzmann weight for intra-group interactions.\footnote{Our interpretation is consistent with the examples given in Section~\ref{sec:examples} but different from that of Vershik~\cite{vershik96} and Pitman~\cite[Chapter 1.5]{pitman}, who consider $Z_{L,n}$ as a microcanonical rather than a canonical partition function.} See Section~\ref{sec:examples} for details.

\subsection{Marginals and conditional probabilities}

An advantage of the probability measure~\eqref{def main prob} is that it allows us to switch between random partitions and sums of independent, {\it infinitely divisible} random variables. The latter play an important r\^ole as stationary measures of the zero-range process. These measures arise as marginals of $\bbP_{L,n}$.

To be sure, sums of independent random variables, corresponding to the occupation numbers $r_j$, have played a significant r\^ole in many recent studies of decomposable random structures (see for instance~\cite{arratia-barbour-tavare});
but, to the best of our knowledge, this connection to the infinitely divisible random variables, corresponding to the site occupation numbers $\eta_x$, has not been noticed before. It allows us to deduce limit laws for random partitions from limit laws for sums of independent variables.

Let  $\bbP_{L,n}\circ \pi_1^{-1}$ and $\bbP_{L,n}\circ \pi_2^{-1}$  be the push-forwards of $\bbP_{L,n}$ under the projections onto $\mathcal{N}_n$ and $\mathcal{R}_n$, respectively. Set 
\be \label{eq:hm} 
            h_{m} = \sum_{\lambda\vdash m} \prod_{j\geq1} \frac1{r_{j}(\lambda)!} \Bigl( \frac{\theta_{j}}j \Bigr)^{r_{j}(\lambda)},
\ee
where the first sum is over the partitions $\lambda$ of the integer $m$. We can think of $h_m$ as the analogue of the normalization $Z_{L,n}$ for a single site (no product over $x$, no background potential $V(x/L)$). Later we will discuss the properties of the map $(\theta_j) \mapsto (h_m)$. 

\begin{proposition}\label{prop:marginals}\hfill
	\begin{itemize} 
	\item[(a)]
	 The measure $\bbP_{L,n}\circ\pi_1^{-1}$ has the product form 	
	\be \label{eq:marginal1} 
		\bbP_{L,n}\bigl( \pi_1^{-1}(\{\bseta\})\bigr) = \bbP_{L,n}\bigl( \{\bslambda:\ \bseta(\bslambda) =\bseta \} \bigr) = \frac{1}{Z_{L,n}} \prod_{x\in \bbZ^d} \bigl( h_{\eta_x} 
		\e{- \eta_x V(x/L) }\bigr). 
	\ee
	\item[(b)] The measure $\bbP_{L,n}\circ \pi_2^{-1}$ is of the Gibbs partition form 
	\begin{multline} \label{eq:marginal-partition} 
		 \bbP_{L,n}\bigl( \pi_2^{-1}(\{\bsr\})\bigr) = \bbP_{L,n}\bigl( \{\bslambda:\ \bsr(\bslambda)  =\bsr \} \bigr) 
		= \frac{1}{Z_{L,n}} \prod_{j \geq 1}\frac{1}{r_j!} \Bigl( \frac{\theta_j}{j} \sum_{x\in \bbZ^d} \e{- j V(x/L)} \Bigr)^{r_j}.  
	\end{multline}
	\end{itemize} 
\end{proposition}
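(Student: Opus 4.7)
Both parts are direct computations that amount to summing the weight in \eqref{def main prob} over the fibres of the projections $\pi_1$ and $\pi_2$. The key observation is that the weight factorises as a product over pairs $(x,j)$, so the sum over a fibre will factorise according to whichever index the fibre constraint decouples. The plan is to carry out the two computations separately, exploiting the two different factorisations.

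For part (a) I would fix $\bseta \in \caN_n$ and observe that the fibre $\pi_1^{-1}(\{\bseta\})$ consists exactly of those $\bslambda$ for which, at each site $x$, the partition $\lambda_x$ is an arbitrary partition of $\eta_x$. Thus the per-site sums are independent, and
\[
	\bbP_{L,n}\bigl(\pi_1^{-1}(\{\bseta\})\bigr) = \frac{1}{Z_{L,n}} \prod_{x \in \bbZ^d} \sum_{\lambda_x \vdash \eta_x} \prod_{j\geq 1} \frac{1}{r_{xj}!}\Bigl(\frac{\theta_j}{j}\Bigr)^{r_{xj}} \e{-j r_{xj} V(x/L)}.
\]
Inside the site-$x$ sum, $\sum_{j\geq 1} j r_{xj} = \eta_x$ is constant, so the exponential factor becomes $\e{-\eta_x V(x/L)}$ and can be pulled out; what remains is precisely $h_{\eta_x}$ as defined in \eqref{eq:hm}. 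This yields \eqref{eq:marginal1}.

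For part (b) the same strategy applies but the factorisation runs along $j$ instead of $x$. Fixing $\bsr \in \caR_n$, the fibre $\pi_2^{-1}(\{\bsr\})$ is the set of $\bslambda$ such that, for each $j$, the numbers $(r_{xj})_{x\in\bbZ^d}$ are non-negative integers summing to $r_j$; these distributions are independent across $j$. Hence the sum decouples as a product of per-$j$ sums of the form $\sum_{(r_{xj})_x:\,\sum_x r_{xj} = r_j} \prod_x a_x^{r_{xj}}/r_{xj}!$ with $a_x = (\theta_j/j)\e{-jV(x/L)}$. The multinomial identity turns this into $(1/r_j!)\bigl(\sum_x a_x\bigr)^{r_j}$, which is exactly the factor appearing in \eqref{eq:marginal-partition}.

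I do not foresee any serious obstacle: both steps are combinatorial identities, and the only point requiring care is verifying that the fibre constraints really do decouple the per-site (resp.\ per-$j$) sums, i.e.\ that the total-mass constraint $\sum_{x,j} j r_{xj}=n$ is automatic once $\bseta \in \caN_n$ (resp.\ $\bsr \in \caR_n$) is fixed, so that no residual coupling remains.
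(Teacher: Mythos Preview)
Your proof is correct, but the paper takes a different route. Rather than summing the weight directly over the fibres, the paper first introduces a grand-canonical measure $\bbP_L^z$ under which the $(R_{xj})$ are independent Poisson variables with parameters $\frac{\theta_j}{j} z^j \e{-jV(x/L)}$, and observes that $\bbP_{L,n}$ is recovered by conditioning on $\{N=n\}$. Under $\bbP_L^z$ the site totals $H_x$ are automatically independent (with law computed via \eqref{eq:hetap}) and the $j$-counts $R_j$ are independent Poisson; the marginals then drop out from $\bbP_{L,n}(\pi_1^{-1}(\{\bseta\})) = \prod_x \bbP_L^z(H_x=\eta_x)/\bbP_L^z(N=n)$ and the analogous formula for $\bsr$.

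Your argument is the more elementary one: it stays entirely within the canonical ensemble and reduces to the definition of $h_m$ and the multinomial theorem, with no auxiliary probability space. The paper's detour through the grand-canonical picture is not needed for this proposition per se, but it is set up here because the Poisson/conditioning framework is reused heavily later (Lemmas~\ref{lem:zln}--\ref{lem:deviations-bound}, the fluctuation theorems in Section~\ref{sec:traps}). So your approach buys simplicity for this one statement, while the paper's approach buys a unified machinery for the rest of the analysis.
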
 

Proposition~\ref{prop:marginals} has natural probabilistic proof and interpretation, which we defer to Section \ref{sec:poisson}. For now, suffice it to say that the first marginal~\eqref{eq:marginal1} arises as the stationary measure of the zero-range process. 

In order to recover the full measure from the marginals, we give below the conditional measures $\bbP_{L,n}(\bslambda|\bseta(\bslambda) = \bseta)$ and $\bbP_{L,n}(\bslambda|\bsr(\bslambda) =\bsr)$. 
Let $\nu_{m}$ be the measure on integer partitions of $m$ given by 
\be \label{eq:num}
   	\nu_{m}(\lambda) = \frac1{h_{m}} \prod_{j\geq1} \frac1{r_{j}(\lambda)!} \Bigl(   	  \frac{\theta_{j}}j \Bigr)^{r_{j}(\lambda)}.
\ee
The normalization $h_{m}$ is defined in \eqref{eq:hm}.
The measure  $\nu_m$ is the analogue of the measure~\eqref{def main prob} for a single site $x$. It is an example of a \emph{Gibbs random partition} which we describe in more details in Section \ref{sec Gibbs}. The next proposition contains expressions of the conditional probabilities.

\begin{proposition}\label{prop:conditional} 
 	The conditional measures are given by
	\begin{itemize}
		\item[(a)] $\displaystyle \bbP_{L,n}(\bslambda|\bseta(\bslambda) = \bseta) = \prod_{x\in\bbZ^{d}} \nu_{\eta_{x}}(\lambda_{x})$.
		\item[(b)] $\displaystyle \bbP_{L,n}(\bslambda|\bsr(\bslambda) = \bsr) =  \prod_{j\geq1} \biggl( \Bigl( \begin{matrix} r_{j} \\ \{ r_{xj} \}_{x\in\bbZ^{d}} \end{matrix} \Bigr) \prod_{x\in \bbZ^d} p_{xj}^{r_{xj}} \biggr)$, with $\displaystyle p_{xj} = \frac{ \exp( - j V(x/L))}{\sum_{y\in \bbZ^d}  \exp( - j V(y/L))}$.
	\end{itemize}
\end{proposition}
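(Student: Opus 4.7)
The plan is to apply Bayes' rule directly: both conditional measures are obtained as the ratio of the joint probability \eqref{def main prob} to the corresponding marginal already computed in Proposition \ref{prop:marginals}. The only work is a regrouping of products, exploiting the constraints $\sum_j j\, r_{xj}=\eta_x$ for part (a) and $\sum_x r_{xj}=r_j$ for part (b). I do not expect any real obstacle; the formulas are essentially forced once one organises the factorisation correctly.

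For part (a), I would start by observing that on the event $\{\bseta(\bslambda)=\bseta\}$ the Boltzmann weights simplify, since for each site
\[
\prod_{j\geq 1} \e{-jV(x/L)\, r_{xj}} \;=\; \e{-V(x/L)\sum_{j\geq 1} j\, r_{xj}} \;=\; \e{-\eta_x V(x/L)}.
\]
Hence on that event,
\[
\bbP_{L,n}(\bslambda)\;=\;\frac{1}{Z_{L,n}}\,\prod_{x\in\bbZ^d}\Bigl(\e{-\eta_x V(x/L)}\prod_{j\geq 1}\frac{1}{r_{xj}!}\Bigl(\frac{\theta_j}{j}\Bigr)^{r_{xj}}\Bigr).
\]
Dividing by the marginal \eqref{eq:marginal1}, the exponential factors and the $Z_{L,n}$ cancel and one is left with $\prod_x \nu_{\eta_x}(\lambda_x)$ by the definition \eqref{eq:num} of $\nu_m$.

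For part (b), I would instead interchange the order of the double product in \eqref{def main prob}: for each fixed $j$, $\prod_{x}(\theta_j/j)^{r_{xj}}=(\theta_j/j)^{r_j}$ by the constraint $\sum_x r_{xj}=r_j$, so that
\[
\bbP_{L,n}(\bslambda)\;=\;\frac{1}{Z_{L,n}}\,\prod_{j\geq 1}\Bigl(\frac{\theta_j}{j}\Bigr)^{r_j}\prod_{x\in\bbZ^d}\frac{1}{r_{xj}!}\bigl(\e{-jV(x/L)}\bigr)^{r_{xj}}.
\]
Dividing by the marginal \eqref{eq:marginal-partition} kills $(\theta_j/j)^{r_j}$ and introduces $r_j!\,/\,(\sum_y\e{-jV(y/L)})^{r_j}$, which recombines with $\prod_x 1/r_{xj}!$ and $\prod_x (\e{-jV(x/L)})^{r_{xj}}$ exactly into the multinomial distribution $\binom{r_j}{\{r_{xj}\}_x}\prod_x p_{xj}^{r_{xj}}$ claimed.

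Probabilistically, this is the statement that conditionally on $\bseta$ the partitions at distinct sites decouple and each follows the single-site Gibbs law $\nu_{\eta_x}$, while conditionally on $\bsr$ the spatial placement of the $r_j$ parts of size $j$ is independent across $j$ and multinomial with weights $p_{xj}\propto \e{-jV(x/L)}$. Since both of these interpretations are suggested by inspection of \eqref{def main prob}, I expect the whole argument to fit in a few lines once the two marginal formulas are invoked.
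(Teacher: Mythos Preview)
Your proposal is correct and is precisely the elementary computation the paper has in mind: the authors write ``We leave the elementary proof to the reader,'' and what you have outlined---dividing the joint law \eqref{def main prob} by the marginals of Proposition~\ref{prop:marginals} and regrouping via $\sum_j j r_{xj}=\eta_x$ for (a) and $\sum_x r_{xj}=r_j$ for (b)---is exactly that omitted argument.
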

We leave the elementary proof to the reader. Notice that $\bbP_{L,n}(\bslambda|\bseta)$ does not depend on $V$, and $\bbP_{L,n}(\bslambda|\bsr)$ does not depend on $\bstheta$. Part (a) says that, conditioned on the site occupation numbers, the partitions $\lambda_x$ become independent Gibbs partitions. Part (b) says that given $r_j$, the $r_{xj}$'s are multinomial: each of the $r_j$ components of size $j$ chooses the site $x$ with probability $p_{xj}$.

\subsection{Gibbs random partitions}
\label{sec Gibbs}

The measure $\nu_{m}$ defined in \eqref{eq:num} can be viewed as describing Gibbs random partitions, which have been studied in detail before, see \cite{arratia-barbour-tavare, erlihson-granovsky,buv,ercolani-ueltschi,NZ,Zhao,MNZ,NSZ,DM,BM} and Chapters 1.5 and 2.5 in~\cite{pitman}. The main questions deal with the number of elements and their typical size, for given weights $(\theta_{j})$. The probability that the typical size is equal to $\ell$ is defined by
\be \label{eq:typel}
\bbP_{n}(X=\ell) = \bbE_{n}\Bigl(\frac{\ell R_{\ell}}{n}\Bigr) = \frac{\theta_{\ell} \, h_{n-\ell}}{n \, h_{n}}.
\ee
See Section \ref{sec rv} for more discussion about the random variable for the typical size of elements, where the random element is picked with probability that is proportional to its size.

We now study the relation between the $\theta_j$s and the $h_n$s. This is obviously useful in view of the relation above. But this relation is also conceptually interesting since the $\theta_{j}$s are related to the L\'evy measure of a process on $\bbN_{0}$ given by the $h_{n}$s.

Recall that a measure $\mu$ is \emph{infinitely divisible} if for all $n\in \bbN$, there is a measure $\tilde \mu_n$ such that $\mu =\tilde \mu_n*\cdots *\tilde \mu_n$ is the $n$-fold convolution of $\tilde\mu_n$. Similarly we  say that a sequence $(h_m)_{m\geq 0}$ of non-negative numbers is infinitely divisible if for all $n\in \bbN$ there is a sequence of \emph{non-negative} numbers $(h_m^{(n)})$ such that $(h_m)$ is the $n$-fold convolution of $(h_m^{(n)})_{m\geq 0}$, i.e., $h = h^{(n)} *\cdots *h^{(n)}$.
There is a rich theory for infinitely divisible measures~\cite{gnedenko-kolmogorov}, closely related to the topic of L{\'e}vy processes.

\begin{proposition} \label{prop:tjhm}
	Let $(h_m)_{m\geq 0}$ be a sequence of non-negative numbers such that $h_0=1$ and $\sum_m h_mz^m$ has nonzero radius of convergence. 
 Then there is a unique sequence $(\theta_j)_{j\geq 1}$ of real numbers such that Eq.~\eqref{eq:hm} holds for all $m\in \bbN$.  Moreover, we have $\theta_j \geq 0$ for all $j\geq 1$ if and only if  $(h_m)_{m\geq 0}$ is infinitely divisible. 
\end{proposition}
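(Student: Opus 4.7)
The strategy is to translate equation \eqref{eq:hm} into a generating function statement. Expanding the exponential and collecting terms according to $m = \sum_j j r_j$, one has
\begin{equation*}
\exp\biggl(\sum_{j\geq 1} \frac{\theta_j}{j} z^j\biggr) = \prod_{j\geq 1} \sum_{r\geq 0} \frac{1}{r!}\Bigl(\frac{\theta_j}{j}\Bigr)^r z^{jr} = \sum_{m\geq 0} \biggl(\sum_{\lambda\vdash m} \prod_{j\geq 1} \frac{1}{r_j(\lambda)!}\Bigl(\frac{\theta_j}{j}\Bigr)^{r_j(\lambda)}\biggr) z^m,
\end{equation*}
so \eqref{eq:hm} holds for every $m\in\bbN$ if and only if $H(z):=\sum_m h_m z^m = \exp(\sum_j (\theta_j/j) z^j)$, viewed either as formal power series or, thanks to the assumed positive radius of convergence, as analytic functions near $0$. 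This reduces the entire proposition to elementary manipulations on power series.

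Existence and uniqueness of $(\theta_j)$ then follow at once: since $h_0 = 1$, $\log H(z)$ is a well-defined formal series with vanishing constant term and real coefficients, so writing $\log H(z) = \sum_{j\geq 1} a_j z^j$ and setting $\theta_j := j a_j$ produces the unique real sequence satisfying the identity.

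For the equivalence with infinite divisibility, the forward direction is easy. Assuming $\theta_j\geq 0$, I would define $(h_m^{(n)})$ by \eqref{eq:hm} with $\theta_j$ replaced by $\theta_j/n\geq 0$; each summand is a product of non-negative numbers, so $h_m^{(n)}\geq 0$, and the generating function identity yields $\bigl(\sum_m h_m^{(n)} z^m\bigr)^n = H(z)$, i.e.\ $(h_m) = (h_m^{(n)})^{*n}$.

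For the converse, suppose $(h_m) = (h_m^{(n)})^{*n}$ with $h_m^{(n)}\geq 0$. The equation $(h_0^{(n)})^n = h_0 = 1$ forces $h_0^{(n)} = 1$, so $H^{(n)}(z):=\sum_m h_m^{(n)} z^m$ lies in $1 + z\bbR[[z]]$. Since $n$-th roots in that ring are unique (checked recursively coefficient-by-coefficient), $H^{(n)}(z) = \exp(\frac{1}{n} \log H(z))$, and extracting the coefficient of $z^j$ gives
\begin{equation*}
h_j^{(n)} = \frac{1}{n}\cdot\frac{\theta_j}{j} + O(n^{-2}) \qquad (n\to\infty),
\end{equation*}
where the remainder is finite because $(\log H)^k$ has no terms of degree $<k$. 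Multiplying by $n$ and letting $n\to\infty$ converts the non-negativity of $h_j^{(n)}$ into $\theta_j\geq 0$. The main delicate point is precisely this asymptotic extraction of $\sgn(\theta_j)$ from the positivity of all convolution roots; the surrounding power-series manipulations, once $h_0^{(n)}=1$ is in hand, are routine.
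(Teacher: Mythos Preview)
Your proof is correct, and the generating-function identity you open with matches the paper's starting point exactly. Where you diverge is in the treatment of the equivalence with infinite divisibility. The paper normalizes: it fixes $z\in(0,R)$, passes to the probability measure $p_m^{(z)}\propto h_m z^m$ on $\bbN_0$, computes its cumulant generating function, recognizes the L{\'e}vy--Khinchin form with L{\'e}vy weights $\frac{\theta_j}{j}z^j$, and then cites the classical characterization of infinitely divisible laws on $\bbN_0$ (Gnedenko--Kolmogorov) to conclude. Your argument stays entirely within formal power series: you build the $n$-th convolution root explicitly from $\theta_j/n$ for the forward direction, and for the converse you use uniqueness of $n$-th roots in $1+z\bbR[[z]]$ together with the asymptotic $h_j^{(n)}=\frac{\theta_j}{jn}+O(n^{-2})$ to read off the sign of $\theta_j$. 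Your route is more elementary and fully self-contained---no probability input is needed---while the paper's route has the advantage of situating the result within the L{\'e}vy--Khinchin framework that it exploits elsewhere (Theorems~\ref{thm EH} and~\ref{thm buv}, the grand-canonical coupling in Section~\ref{sec:poisson}).
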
 

\begin{proof}
We note the  power series identity 
\be  \label{eq:levy}
	\sum_{m\geq 0} h_m z^m = \exp\biggl( \sum_{j \geq 1} \frac{\theta_j}{j}z^j\biggr)
\ee 
(see e.g.~\cite[Section 3.3]{Aigner}). 
This identity shows that for a given $(h_m)$ there is a unique sequence of real numbers $(\theta_j)$ such that Eq.~\eqref{eq:hm} holds.

Let $R$ denote the radius of convergence of the series $\sum h_{m} z^{m}$. Fix $z\in(0,R)$, and let $C^{(z)}=\sum_{m\geq 0} h_m z^m$ and
$p_m^{(z)}= C^{-1} h_m z^m$. One can check that $(h_m)$ is infinitely divisible if and only if $(p_m)$ is. Furthermore, $(p_m^{(z)})$ defines a probability measure on $\bbN_0$ with cumulant generating function 
\be
	 \log\Bigl( \sum_{m\geq 0} p_m^{(z)} \e{tm}\Bigr) = 
		\sum_{j\geq 1} \frac{\theta_j}{j} z^j \bigl( \e{tj} - 1\bigr). 
\ee
Here, $t$ satisfies $z\e{t}<R$.
We recognize the L{\'e}vy-Khinchin representation for an  infinitely divisible measure in the special case of a measure on $\bbN_0$; it follows from classical results (see e.g.\ Chapter~18 in \cite{gnedenko-kolmogorov}) that $(p_m^{(z)})$ is infinitely divisible if and only if $\theta_j \geq 0$ for all $j\geq 1$. If this is the case, the weights $\alpha_{j}^{(z)} = \frac{\theta_j}{j}z^j$ define a measure on $\bbN$,  the \emph{L{\'e}vy measure} of $(p_m^{(z)})$. 
\end{proof}

Another question of interest is the relation between the asymptotic behaviors of the sequences $(\theta_m)$ and $(h_m)$ as $m\to \infty$. This has been investigated in the references cited at the beginning of this subsection. Relevant results can also be found in the probability literature on the relation between the tails of an infinitely divisible measure and its L{\'e}vy measure, e.g. in  \cite{embrechts-hawkes,embrechts-klueppelberg-mikosch}. 
Here we quote a result of Embrechts and Hawkes~\cite{embrechts-hawkes} on the tail equivalence of an infinitely divisible measure and its L{\'e}vy measure; equivalently, on the relation between the tails of $(h_m)$ and $(\theta_j/j)$. Recall that the convolution between two sequences is defined by $(a*b)_{n} = \sum_{j=0}^{n} a_{j} b_{n-j}$.

\begin{theorem} (Embrechts and Hawkes \cite{embrechts-hawkes})
\label{thm EH}
	Let $(p_n)_{n\geq 0}$ define an infinitely divisible law on $\bbN_0$ with L{\'e}vy measure $(\alpha_j)_{j\geq 1}$. Suppose that $\alpha_j >0$ for all $j\geq 1$. Let $\overline{\alpha}_j = \alpha_j / (\sum_{k\geq 1} \alpha_k)$. The following are equivalent as $n\to \infty$: 
	\begin{itemize}
		\item[(i)] $(p*p)_{n} = 2(1+o(1)) p_n$ and $p_{n+1} /p_{n} \to 1$.
		\item[(ii)] $(\overline{\alpha} * \overline{\alpha})_{n} = 2 (1+o(1)) \overline{\alpha}_n$ and $\overline{\alpha}_{n+1} /\overline{\alpha}_{n} \to 1$.
		\item[(iii)] $p_n = (1+o(1)) \alpha_n$ and $\alpha_{n+1}/\alpha_n \to 1$.
	\end{itemize} 
\end{theorem}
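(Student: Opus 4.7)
The plan is to interpret the theorem via the compound Poisson representation
\be \label{eq:compoisson}
p_n = \e{-\alpha} \sum_{k=0}^\infty \frac{\alpha^k}{k!}\, \bar\alpha^{*k}_n,
\ee
valid when $\alpha := \sum_j \alpha_j < \infty$ (with $\bar\alpha_j := \alpha_j/\alpha$); the case $\alpha=\infty$ reduces to the finite one by truncating the L\'evy measure to $\{1,\ldots,N\}$ and passing to the limit $N\to\infty$. Each of the three conditions is a form of subexponentiality: (i) for $p$, (ii) for the normalized L\'evy measure $\bar\alpha$, and (iii) is the tail equivalence $p_n \sim \alpha_n$.

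The forward directions (ii) $\Rightarrow$ (iii) and (ii) $\Rightarrow$ (i) rest on the Chistyakov--Kesten lemma: if a probability sequence $(q_n)$ is long-tailed ($q_{n+1}/q_n\to 1$) and satisfies $q^{*2}_n \sim 2q_n$, then $q^{*k}_n \sim k\, q_n$ for every $k$, and for every $\eps>0$ there exists $K(\eps)$ with $q^{*k}_n \leq K(\eps)(1+\eps)^k q_n$ uniformly in $n,k$. Applied to $q=\bar\alpha$ under (ii) and plugged into \eqref{eq:compoisson}, dominated convergence in the Poisson sum yields
\be
\frac{p_n}{\bar\alpha_n} \;\longrightarrow\; \e{-\alpha}\sum_{k\geq 0} \frac{\alpha^k\, k}{k!} \;=\; \alpha,
\ee
which is (iii). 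Applying the same reasoning to $p*p$, whose compound Poisson intensity doubles to $2\alpha$, gives $(p*p)_n \sim 2\alpha\,\bar\alpha_n \sim 2 p_n$, and the ratio condition transfers from $\bar\alpha$ to $p$ via (iii), so (i) follows as well.

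For the converse implications I would work at the level of generating functions, using the recursion obtained by differentiating $P(z) = \exp A(z)$ with $A(z) = \sum_j \alpha_j(z^j-1)$, namely
\be
n\, p_n \;=\; \sum_{j=1}^n j\, \alpha_j\, p_{n-j}.
\ee
This identity inverts the compound Poisson map and expresses the $\alpha_j$ as a nonlinear functional of $(p_n)$. Assuming (i), the aim is to show that the sum on the right is dominated by the terms with $j$ near $n$, so that $\alpha_n$ inherits the asymptotic behaviour of $p_n$; the long-tailedness $p_{n+1}/p_n\to 1$ and subexponentiality of $p$ are exactly what one needs to bound the interior convolution sum $\sum_{j<n} j\alpha_j p_{n-j}$ against $np_n$. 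The main obstacle is precisely this converse step: one cannot simply reverse the dominated-convergence argument of the forward direction, and a careful bootstrapping on $n$ is required, with the ratio condition used to prevent pathological oscillations of $\alpha_n/p_n$. Once (i) $\Rightarrow$ (iii) is in hand, (iii) $\Rightarrow$ (ii) follows by tail equivalence together with an application of the forward machinery read in reverse on the doubled-intensity representation of $p*p$.
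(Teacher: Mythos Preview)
The paper does not prove this theorem; it is quoted from Embrechts and Hawkes and used as a black box (note also the remark immediately following the statement that the L\'evy measure of an $\bbN_0$-valued infinitely divisible law always has finite total mass, so your truncation for the case $\alpha=\infty$ is never needed). There is therefore no in-paper argument to compare against, and what you have written is an attempt to reconstruct the original proof.

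Your forward implications (ii) $\Rightarrow$ (iii) and (ii) $\Rightarrow$ (i) via the compound Poisson representation together with Kesten's bound $\bar\alpha^{*k}_n \leq K(\eps)(1+\eps)^k \bar\alpha_n$ and dominated convergence are correct and are indeed the standard route.

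The converse directions are where the substance lies, and here your sketch has genuine gaps. For (i) $\Rightarrow$ (iii) you write down the right recursion $np_n=\sum_{j=1}^n j\alpha_j p_{n-j}$ and the right intuition, but ``a careful bootstrapping on $n$ is required'' is not an argument; this is precisely the step where Embrechts and Hawkes do real work, and nothing you have written indicates how to control the interior of the convolution sum. More seriously, your route (iii) $\Rightarrow$ (ii) is circular as stated: you appeal to ``the forward machinery read in reverse on the doubled-intensity representation of $p*p$'', but that machinery (Kesten's bound inside the Poisson sum) \emph{presupposes} subexponentiality of $\bar\alpha$, which is exactly (ii). Tail equivalence $p_n\sim\alpha_n$ transfers subexponentiality from one distribution to another only once one of them is already known to be subexponential, and (iii) by itself asserts neither $(p*p)_n\sim 2p_n$ nor $(\bar\alpha*\bar\alpha)_n\sim 2\bar\alpha_n$. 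A non-circular argument for the converse needs an additional idea---for example, exploiting the termwise lower bounds $p_n\geq e^{-\alpha}\tfrac{\alpha^k}{k!}\bar\alpha^{*k}_n$ to get a priori control of the ratios $\bar\alpha^{*k}_n/\bar\alpha_n$ and then upgrading this to the sharp constant, as in the original paper.
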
 
We note that the L{\'e}vy measure of an integer-valued random variable has always finite mass, hence $\sum_{j\geq 1} \alpha_j <\infty$. 

A probability measure on $\bbN$ satisfying $(p*p)_{n}  \sim 2p_n$ is called \emph{discrete subexponential}. This property suggests that, if two independent variables are conditioned so their sum takes some big value, one variable will take a small value and the other variable will take the appropriate big value. Discrete subexponential variables are necessarily heavy-tailed, that is, $\sum_n p_n z^n$ has radius of convergence $1$. We can apply this theorem if $\sum_j \theta_j /j <\infty$ and $\theta_{j+1}/\theta_{j} \to 1$. Let  $p_m = h_m \exp( - \sum_{j\geq 1} \theta_j/j)$. The condition (ii) becomes
	\be
		\sum_{j=1}^{n-1} \frac{\theta_j}{j} \frac{\theta_{n-j}}{n-j} = 2 \sum_{j=1}^{n/2} \frac{\theta_j}{j} \frac{\theta_{n-j}}{n-j} = 2 \bigl(1+o(1)\bigr) \frac{\theta_n}{n} \sum_{j\geq 1} \frac{\theta_j}{j}. 
	\ee
The first equality is valid if $n$ is even, there is an unimportant correction in the case of $n$ odd.
An immediate consequence of Theorem \ref{thm EH} and of the dominated convergence theorem is the following.

\begin{theorem}
\label{thm buv}
Assume that $\theta_{n+1}/\theta_{n} \to 1$ and that $\theta_{n-j}/\theta_{n} \leq c_{j}$ for $1 \leq j \leq \frac n2$ and all $n$, with $c_{j}$ satisfying $\sum \theta_{j} c_{j} / j < \infty$. Then, as $n\to\infty$,
\[
h_n = \bigl( 1+o(1) \bigr)  \frac{\theta_n}{n} \exp\Bigl(\sum_{j\geq 1} \frac{\theta_j}{j}\Bigr).
\]
\end{theorem}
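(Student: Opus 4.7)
The plan is to recognize this as a direct consequence of Theorem \ref{thm EH} applied to the normalized sequence $p_m := h_m \e{-S}$ where $S := \sum_{j\geq 1} \theta_j/j$. First I would observe that $S < \infty$ under the hypotheses: since $\theta_{n+1}/\theta_n \to 1$ implies $\theta_{n-j}/\theta_n \to 1$ as $n\to\infty$ for each fixed $j$, the bound $\theta_{n-j}/\theta_n \leq c_j$ forces $c_j \geq 1$, hence $\sum \theta_j/j \leq \sum \theta_j c_j /j < \infty$. The identity \eqref{eq:levy} then gives convergence of $\sum h_m z^m$ at $z=1$, and Proposition \ref{prop:tjhm} shows that $(p_m)$ is an infinitely divisible probability measure on $\bbN_0$ with L\'evy measure $\alpha_j = \theta_j/j$. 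It is therefore enough to establish condition (ii) of Theorem \ref{thm EH}; from (iii) we will then read off $p_n \sim \theta_n/n$, i.e., $h_n \sim (\theta_n/n)\e{S}$.

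Setting $\bar\alpha_j = S^{-1}\theta_j/j$, the ratio condition $\bar\alpha_{n+1}/\bar\alpha_n \to 1$ is immediate from the hypothesis. The bulk of the work is to verify the convolution asymptotic $(\bar\alpha * \bar\alpha)_n \sim 2\bar\alpha_n$, which after clearing the normalization amounts to showing
\[
\sum_{j=1}^{n-1}\frac{\theta_j}{j}\,\frac{\theta_{n-j}}{n-j} \;\sim\; \frac{2\theta_n}{n}\sum_{j\geq 1}\frac{\theta_j}{j}.
\]
Using the symmetry $j\leftrightarrow n-j$ I would reduce to studying $2\sum_{j=1}^{\lfloor n/2\rfloor} (\theta_j/j)(\theta_{n-j}/(n-j))$, up to a negligible boundary term when $n$ is even, then divide through by $\theta_n/n$ and pass to the limit term-by-term under the sum.

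The main obstacle, and the reason for the precise form of the hypotheses, lies in justifying this passage to the limit. Pointwise, each summand $(\theta_j/j)(n/(n-j))(\theta_{n-j}/\theta_n)$ converges to $\theta_j/j$ as $n\to\infty$, since $n/(n-j)\to 1$ and $\theta_{n-j}/\theta_n\to 1$ for fixed $j$; but this alone does not control the sum. The restriction to $j\leq n/2$ gives the crude bound $n/(n-j)\leq 2$, and the hypothesis $\theta_{n-j}/\theta_n \leq c_j$ then provides a summable dominant $2\theta_j c_j/j$. Dominated convergence now delivers the desired limit $S$, establishing condition (ii), and Theorem \ref{thm EH} yields the asymptotics for $p_n$; multiplying by $\e{S}$ finishes the proof. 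Everything else is bookkeeping that converts the L\'evy-tail statement $(\alpha_j)$ into the generating-function statement $(h_m)$ via Proposition \ref{prop:tjhm}.
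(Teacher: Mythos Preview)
Your proof is correct and follows exactly the approach taken in the paper: the paper likewise presents the theorem as an immediate consequence of Theorem~\ref{thm EH} together with dominated convergence, noting that $c_j\geq 1$ forces $\sum_j \theta_j/j<\infty$, defining $p_m = h_m\exp(-\sum_j\theta_j/j)$, and verifying condition~(ii) via the symmetrized sum with the same dominating function $2(\theta_j/j)c_j$. You have spelled out a few details the paper leaves implicit (e.g.\ why $c_j\geq 1$, and the role of Proposition~\ref{prop:tjhm} in identifying the L\'evy measure), but the argument is the same.
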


Notice that $c_{j}$ is necessarily greater or equal to 1, which requires $\sum_j \theta_j /j <\infty$. The theorem was actually proposed in \cite{buv} but the connection with infinitely divisible laws and with Theorem \ref{thm EH} had not been noticed. It applies in particular to stretched exponential weights, $\theta_j/j  = \exp( - j^\gamma)$ with $0<\gamma<1$. 

\subsection{Gibbs partitions and Poisson random variables} \label{sec:poisson}

We conclude this section by explaining in more details the  probabilistic picture behind Proposition \ref{prop:marginals}. To this aim we  generalize a well-known relationship between Gibbs partitions and Poisson variables, see for example  Eq.~(1.53) in~\cite{pitman} or the \emph{conditioning relation} (3.1) in \cite{arratia-barbour-tavare}. 

We assume that there exists $z>0$ such that for all $L>0$,
\be\label{eq:zfinite} 
	\sum_{x\in \bbZ^d} \sum_{j\geq 1} \frac{\theta_j}{j}\, z^j \e{- j V(x/L)} <\infty.
\ee
Let $(\Omega,\mathcal{F},\mathbb{P}_L^z)$ be a probability space and let $(R_{xj})_{x\in \bbZ^d, j \in \bbN}$ be a family of independent Poisson random variables,
\be
	 R_{xj} \sim \text{Poiss}\Bigl( \frac{\theta_j}{j}z^j e^{- j V(x/L) } \Bigr). 
\ee
$\bbP_{L}^{z}$ is the {\it grand-canonical measure}.
The occupation numbers are the random variables
\be
	H_x := \sum_{j\in \bbN} j R_{xj}, \qquad R_j := \sum_{x \in \bbZ^d} R_{xj}. 
\ee
We also define the total number of particles by
\be
N = \sum_{x \in \bbZ^d} \sum_{j\in \bbN} j R_{xj} = \sum_{x \in \bbZ^d} H_{x} = \sum_{j\in \bbN} j R_{j}.
\ee
A moment of thought shows that the law $\bbP_{L,n}$ is recovered by conditioning on the event $\sum_{x,j} j R_{xj} = n$, 
\be \label{eq:pln-cond}
	\bbP_{L,n}(\boldsymbol{\lambda}) = \bbP_L^z \Bigl( \forall x \in \bbZ^d\ \forall j \in \bbN:\ R_{xj} = r_{xj}(\boldsymbol{\lambda})\, \Big|\,   N = n \Bigr),  
\ee
and the normalization satisfies 
\be \label{eq:zln-cond}
	z^n Z_{L,n} = \bbP_L^z \Bigl(  N = n \Bigr)  
		\times \exp \Bigl( \sum_{x\in \bbZ^d} \sum_{j\in \bbN} \frac{\theta_j}{j}\, z^j \e{-j V(x/L)} \Bigr). 
\ee
Note that in Eq.~\eqref{eq:pln-cond} the right-hand side is independent of $z$; this is related to the invariance of the measure $\bbP_{L,n}$ under rescalings $\theta_j \to \theta_j z^j$.  
Under the measure $\bbP_{L}^z$, the random variables $R_j$ are independent Poisson variables 
\be
	R_j \sim \text{Poiss} \Bigl( \frac{\theta_j}{j}\, z^j \sum_{x\in \bbZ^d} \e{- j V(x/L)}\Bigr),  
\ee
and the $H_x$ are independent variables with cumulant generating function 
\be  \label{eq:rjgc} 
	\log \bbE_L^z \bigl[e^{ t H_x}\bigr] = \sum_{j\in \bbN} \frac{\theta_j}{j} z^j \e{- j V(x/L)} \bigl( e^{jt}- 1) \quad (t \in \bbR). 
\ee
Put differently, $H_x$ is an integer-valued, infinitely divisible random variable  with L{\'e}vy measure $\nu(j) = (\theta_j /j) z^j \exp( - j V(x/L))$, compare with the proof of Proposition~\ref{prop:tjhm}.  Furthermore,
\be \label{eq:hetap}
	 \bbP_L^z\Bigl( H_x = m\Bigr)=  h_m z^m \e{- m V(\frac{x}{L})} \e{- \sum_{j\geq 1} 
			\frac{\theta_j}{j} z^j \exp( - j V(x/L) )},
\ee
with $h_m$ as defined in Eq.~\eqref{eq:hm}.  Indeed, 
\be
\begin{aligned} 
	 \bbP_L^z\Bigl( H_x = m \Bigr) &= \sum_{\lambda \vdash m} 
		\prod_{j\geq 1} \bbP_L^z\Bigl( R_{xj} = r_j(\lambda) \Bigr) \\
		& = \sum_{\lambda \vdash m} 
		\prod_{j\geq 1} \left \lbrace \frac{1}{r_j(\lambda)!}\Bigl( \frac{\theta_j}{j}\, z^j \e{-jV(\frac{x}{L}) } \Bigr)^{r_j(\lambda)} \e{- \frac{\theta_j}{j} z^j \exp( - j V(\frac{x}{L}))} \right \rbrace. 
\end{aligned}
\ee

\begin{proof}[Proof of Proposition~\ref{prop:marginals}]
	For (a), we note that
	\be
	\begin{aligned}
		 \bbP_{L,n}\bigl( \pi_1^{-1}(\{\bseta\})\bigr)  & = 
			\bbP_{L}^z \Bigl( \forall x\in \bbZ^d:\ H_x = \eta_x\, \Big|\, 
				N = n \Bigr) 	\\
			& = \frac{1}{\bbP_L^z( \sum_x H_x =n)} \prod_{x\in \bbZ^d} \bbP_L^z \Bigl( H_x = \eta_x \Bigr)
	\end{aligned}
	\ee
	and we conclude with Eqs.~\eqref{eq:hetap} and~\eqref{eq:zln-cond}.  For (b),  we observe that
	\be
	\begin{aligned}
  		\bbP_{L,n}\bigl( \pi_2^{-1}(\{\bsr\})\bigr)
			& = \bbP_{L}^z \Bigl( \forall j \in \bbN:\ R_j = r_j\, \Big|\, 
				N = n \Bigr)\\
			& = \frac{1}{\bbP_L^z(\sum_j j R_j =n)} \prod_{j\in \bbN} \bbP_L^z\bigl(R_j =r_j\bigr) 			
	\end{aligned}
	\ee
	and conclude with Eqs.~\eqref{eq:rjgc} and~\eqref{eq:zln-cond}.
\end{proof}

\section{Relationship with existing models} \label{sec:examples}

Our setting is closely related to several models of interest, namely the ideal Bose gas, the zero-range process, particle clustering, coagulation-fragmentation, Becker--D\"oring, spatial permutations, and population genetics. The relations are explained in this section. Each situation comes with its own language; the keywords and their correspondence are summarized in Table \ref{table:corr}.
\begin{table}[h]
\begin{tabular} {l | lll} 
\hline \hline
Zero-range & particle & site & - \\
\hline
Chinese restaurant & customer & restaurant  & table  \\ 
\hline 
ideal Bose gas,  & particle &  Fourier mode, & cycle \\
spatial permutation & & energy level& \\
\hline
particle clustering,  & particle &  site in space & cluster (droplet) \\
nucleation & & & \\
\hline 
population genetics & individual & colony & same-allele group\\
	&&& within colony \\
\hline \hline
\end{tabular} 
\vspace{.2cm}
\caption{\label{table:corr}
{\footnotesize
Language of the different models and their correspondence.
}}  
\end{table}

\subsection{Ideal Bose gas}
\label{sec ideal Bose}

Although it was not fully appreciated at the time, Bose--Einstein condensation is the first description of a phase transition in statistical mechanics. The ideal Bose gas is a quantum system whose description involves a complex Hilbert space and the Schr\"odinger equation; but its equilibrium state is a probability distribution of occupation numbers of Fourier modes. It fits our setting, by choosing $V(x) = \beta \|x\|^{2}$ and $\theta_{j} \equiv 1$; $\beta$ is the inverse temperature. With the change of variables $k = \frac{2\pi}L x$, writing
$\lambda_{kj}$ instead of $\lambda_{xj}$, $\bbP_{L,n}$ becomes a probability measure on spatial partitions $(\lambda_{kj})$. Summing over $j$ and writing $\eta_{k}$ instead of $\eta_{x} = \eta_{kL}$, we get the familiar probability measure on occupation numbers
\be \label{eq:ex-bec}
	\bbP_{L,n}(\pi_1^{-1}(\{\bseta\})) = \frac1{Z_{L,n}} \prod_{k \in \frac{2\pi}L \bbZ^{d}} \e{-\beta \|k\|^{2} \eta_{k}}.
\ee
Summing over $k$ we get that the marginal $\bbP_{L,n}\circ \pi_2^{-1}$ is the probability distribution of cycle lengths associated with the ideal Bose gas \cite{Suto2,BCMP}. 
Thus $\bbP_{L,n}$ provides a coupling of the distribution of momenta and cycle lengths for the ideal Bose gas. This generalizes to independent bosons in a trap, when the weight $\exp( - \beta \sum_k \|k\|^2 \eta_k)$ is replaced with $\exp( - \beta \sum_{r\in \mathcal{I}} E_r \eta_r)$, where $\mathcal{I}$ is a countable index set replacing $\bbZ^d$ and $E_r$ ($r\in \mathcal{I}$) are the eigenvalues of the Schr{\"o}dinger operator in the trap. Results in this case were recently obtained in \cite{CD} (see also \cite{BP}).

The interacting Bose gas is much more complicated and it does not fit the present setting. But a partial mean-field approach for the dilute regime suggests that interactions can be approximated by cycle weights $\theta_{j}$ \cite{BU-prl}.

\subsection{Zero-range process}
\label{sec zero-range}

This describes a system of classical particles with stochastic dynamics. There are $n$ particles moving on the sites $\{1,\dots,L\}$ and we let $\bseta \in \caH_{n}$ denote the occupations of the sites. The dynamics is as follows. A particle exits the site $x$ at rate $g(\eta_{x})$, where $g$ is a given function $\bbN \to (0,\infty)$, and it chooses a new site uniformly at random among the neighbors. As it turns out, the spatial dimension does not appear in the stationary measure, so the model is usually studied in $d=1$. The invariant measure is
\be
\bbP_{L,n}(\bseta) = \frac1{Z_{L,n}} \prod_{x=1}^{L} h_{\eta_{x}},
\ee
where the function $h_{k}$ is related to the rates $g$ by
\be
h_{k} = \prod_{i=1}^{k} \frac1{g(i)}.
\ee
It fits the setting studied in this article by choosing the potential $V$ such that $\e{-V(s)} = \chi_{[0,1]}(s)$; see Eq.\ \eqref{eq:marginal1}.

It was noticed by Evans \cite{evans96} that for certain rates $g$, the system possesses a critical density where a sort of Bose--Einstein condensation takes place. See also \cite{GSS,AL,AGL,CG} for further studies.
Variants of the model allow for motion on graphs other than $\bbZ^d$ and hopping mechanisms different from the simple random walk, see~\cite{evans96, waclaw-etal,godreche-luck} and the references therein. When $\e{-V}$ differs from the characteristic function, the marginal~\eqref{eq:marginal1} is the stationary measure of an \emph{inhomogeneous} zero-range process, compare with Eq.~(2.4) of~\cite{godreche-luck}. 

\subsection{Particle clustering}
\label{sec:droplets}
The measure $\bbP_{L,n}$ describes approximately the droplet size distributions for a system of interacting particles in the canonical Gibbs ensemble, see \cite{sator,jk} and the references therein. Let $v$  be a pair potential with a finite range $R>0$. Thus particles at mutual distance larger than $R$ do not interact. 
 With each configuration $(x_1,\ldots,x_n)\in [0,L]^{dn}$ we associate a graph by drawing a line between $x_i,x_j$ whenever $|x_i-x_j|\leq R$, and we call $N_k(\boldsymbol{x})$ the number of connected components having $k$ particles. We have $\sum_{k=1}^N k N_k(\boldsymbol{x}) = n$. We put on $[0,L]^{dn}$ the canonical Gibbs measure at
 inverse temperature $\beta>0$.  If we neglect the constraint that particles belonging to different connected components have mutual distance larger than $R$, the probability of seeing a given droplet size distribution $(N_k)$ becomes approximately
\be \label{eq:ideal-mixture}
	 \prod_{k=1}^n \frac{1}{N_k!} \Bigl( L^d Z_k(\beta) \Bigr)^{N_k},
\ee
where $Z_j(\beta)$ is a partition function over droplet-internal degrees of freedom~\cite{jk}.
Eq.~\eqref{eq:ideal-mixture} corresponds to a measure of the form~\eqref{eq:marginal-partition}  with $\sum_{x} \exp( - V(x/L))$ replaced by $L^d$, and $\theta_j / j= Z_j(\beta)$.

\subsection{Coagulation-fragmentation processes and Becker--D{\"o}ring equations}  \label{sec:becker-doering}
The Becker--D{\"o}ring system of coupled ordinary differential equations~\cite{becker-doering}  is a popular model for the dynamics of nucleation, with interesting long-time behavior~\cite{ball-carr-penrose}. A natural stochastic variant of the model is the following.
%
Let $(a_j)_{j\geq 1}$ and $(b_j)_{j\geq 1}$ be positive numbers such that  for all $j$, $\frac{\theta_{1} \theta_{j} a_{j}}j = \frac{\theta_{j+1} b_{j+1}}{j+1}$. 
We define a  continuous-time Markov chain with state space $\{\lambda:\, \lambda \vdash n\}$  and two types of transition: 
\begin{itemize}
	\item \emph{Coagulation}: a monomer decides to join a $j$-cluster. The transition 
	$(r_1, r_j, r_{j+1}) \to (r_1-1,r_{j-1}-1, r_{j+1} +1)$ happens at rate $a_j r_1 r_j /L^d$ 
	if $j \neq 1$, and $a_1 r_1(r_1-1) /L^2$ for $j =1$. 
	\item \emph{Fragmentation}: a monomer decides to depart from a $j$-cluster, resulting in the transition $(r_1, r_{j-1},r_j)\to (r_1+1,r_{j-1}+1,r_j -1)$. This happens at rate $b_{j} r_{j}$. 
	\end{itemize} 
One can check through detailed balance that the marginal~\eqref{eq:marginal-partition} with the square potential $\exp( - V(s) ) = \mathbf{1}_{[0,1]}(s)$ is a stationary measure of this process. The dynamics is a stochastic version of the Becker--D{\"o}ring equations much in the same way as the Marcus--Lushnikov coalescent is a stochastic version of the Smoluchowski coagulation equations~\cite{aldous99review}. The model can be easily generalized to allow for joining and departure of groups of size larger than one, and falls into the class of  well-studied coagulation-fragmentation processes, see e.g.\ \cite{durrett-granovsky-gueron, Bertoin}. In Section~\ref{sec:coag-frag} we propose a ``spatial''  version of the process for which our measure $\bbP_{L,n}$ is stationary.

\subsection{Spatial permutations}
\label{sec spatial permutations}

Models of spatial permutations are motivated by Feynman's approach to the Bose gas and by S\"ut\H o's work on cycles \cite{Suto2}. A more general framework was proposed in \cite{BU1}, which was studied further in \cite{BU2,BZ}. Spatial permutations involve a distribution jointly over points in $\bbR^{d}$ and over permutations of these points, with penalties that discourage long jumps. More precisely, the probability space is $\Lambda^{n} \times \caS_{n}$, where $\Lambda$ is the box $[0,L]^{d}$ (with periodic boundary conditions), $n$ is the number of points, and $\caS_{n}$ is the group of permutations of $n$ elements. Let $\xi : \bbR^{d} \to [0,\infty]$ and define
\be
Z_{L,n} = \sum_{\sigma \in \caS_{n}} \int_{\Lambda^{n}} \dd x_{1} \dots \dd x_{n} \prod_{j=1}^{n} \e{-\xi(x_{j}-x_{\sigma(j)})}.
\ee
The probability element for having points $x_{1},\dots,x_{n}$ and permutation $\sigma$ is then
\be
\frac1{Z_{L,n}} \Bigl( \prod_{j=1}^{n} \e{-\xi(x_{j}-x_{\sigma(j)})} \Bigr) \dd x_{1} \dots \dd x_{n}.
\ee
The marginal obtained after summing over permutations is a permanental point process, which we do not discuss here despite interest of its own. We rather focus on properties of permutation cycles. We make the extra assumption that $\e{-\xi}$ has non-negative Fourier transform, which we write $\e{-V}$, with $V$ a real function on $\bbR^{d}$. Then the marginal over the cycle lengths, after integrating over positions and summing over compatible permutations, is precisely the marginal \eqref{eq:marginal-partition}. This follows from the Poisson summation formula, see  \cite{BU1} for details. Notice that the special case $\xi(x) = \beta \|x\|^{2}$ corresponds to the homogeneous ideal Bose gas described in Section \ref{sec ideal Bose}.

\subsection{Population genetics} 
Consider $n$ individuals that carry different alleles of a given gene, and live in different locations or \emph{colonies} $x$. Inside each colony, we group individuals that carry the same allele. This gives rise to a spatial partition. In the Ewens case $\theta_j\equiv\theta$, one might imagine that our measure is stationary for a population model that includes migration (with some colonies possibly more attractive than others) and mutation. Remember that the Ewens sampling formula appears
naturally in the infinite alleles model with mutation rate $\theta$; see~\cite{durrett} and references therein for more background.

\section{Stochastic processes}
\label{sec processes}

Here we propose two continuous-time Markov processes with state space $\Lambda_n$ that have $\bbP_{L,n}$ as a reversible (hence stationary) measure. 
The first process combines the zero-range process with the Chinese restaurant process; it has the nice structural property that the vector of site occupation numbers $\bseta(\bslambda(t))$ is Markovian (regardless of the starting point $\bslambda(0)$) and evolves as the zero-range process. However  we are able to define the Chinese restaurant part only when the weights $\theta_j$ are $j$-independent. For non-constant $\theta_j$'s, we replace the Chinese restaurant step by instant reshuffling.  The second process is a coagulation-fragmentation process that is very natural from the point of view of the Becker--D{\"o}ring model of nucleation explained in Section~\ref{sec:becker-doering}.

\subsection{Chain of Chinese restaurants}
\label{sec Chinese}

Recall that, in the Chinese restaurant process, customers enter the restaurant one by one. The $(n+1)$th customer sits next to an existing customer with probability $\frac1{n+\theta}$, and starts a new table with probability $\frac\theta{n+\theta}$. The table occupation is a random partition and the distribution after $n$ customers is given by the Ewens measure. That is, take $\theta_{j} \equiv \theta$ in Eq.\ \eqref{eq:num}. We refer to Chapter 3 in \cite{pitman} for background and details.

We adapt the dynamics to spatial partitions as follows. To each site $x \in \bbZ^{d}$ is associated a restaurant. The total number of customers is $n$ and it is conserved. Let $\eta_{x}$ denote the number of customers at site $x$, and $\lambda_{x} \vdash \eta_{x}$ denote the table occupation. We consider a continuous-time Markov dynamics where
\begin{itemize}
\item A customer exits restaurant $x$ at rate $g(\eta_{x})$; he is chosen uniformly among the $\eta_{x}$ customers at $x$.
\item The new restaurant $y$ is chosen with probability $t(x,y)$. It may depend on $L$. We assume that $\sum_{y} t(x,y) =1$, and that
\be 
	 \label{eq:pbalance} 
			\e{- V(x/L)} t(x,y) = \e{- V(y/L)} t(y,x).
\ee
\item In the restaurant $y$, the new customer sits next to another customer with probability $\frac1{\eta_{y}+\theta}$, and at an empty table with probability $\frac\theta{\eta_{y}+\theta}$.
\end{itemize}
With $h_{n}$ defined in \eqref{eq:hm}, we take the rate to be $g(n) = h_{n-1} / h_{n}$ (with $g(0)=0$). One can check that $h_{n} = \theta (\theta+1) \dots (\theta+n-1) / n!$ and that $g(n) = \frac n{\theta+n-1}$. A possible choice for $t(x,y)$ is $\e{-V(y/L)} / \sum_{z} \e{-V(z/L)}$ (with $y=x$ being allowed).

It is clear that this defines a Markov process on spatial partitions. Let $q(\bslambda,\bslambda')$ denote the rate of the transition $\bslambda \mapsto \bslambda'$. In order to write it explicitly, observe that it is zero unless there exist $x,y\in\bbZ^{d}$ and $j,k\in\bbN$ (with $k\neq0$) such that
\be   \label{eq:switch} 
	\begin{aligned}
		r_{xj}(\bslambda') & = r_{xj}(\bslambda) - 1,& \quad r_{y,k+1}(\bslambda') & = r_{yk}(\bslambda) - 1,\\
		r_{x,j-1}(\bslambda') & = r_{x,j-1}(\bslambda) +1,&\quad r_{y,k}(\bslambda') & = r_{yk}(\bslambda) - 1 			
	\end{aligned}
\ee
and all other occupation numbers are $r_{zi}$ unchanged. In this case, the rate is
\be
\label{eq:rate}
q(\bslambda,\bslambda') = g(\eta_{x}(\bslambda)) \; t(x,y) \; p_{-}(j,\lambda_{x}) \; p_{+}(k,\lambda_{y}).
\ee
Here, the probability that the customer leaving restaurant $x$ leaves a $j$-table is 
\be \label{eq:pminus} 
	p_-(j;\lambda_x) = \frac{j r_{xj}}{\eta_x}
\ee
and the probability that the customer joins a $k$-table ($k\geq 1$) or opens a new table ($k=0$) is
\be \label{eq:pplus}
	p_+(k;\lambda_y) = \frac{ k r_{yk}}{\eta_y +\theta},\quad p_+(0;\lambda_y) = \frac{\theta}{\eta_x+\theta}. 
\ee

The stationary measure of this process is precisely the measure defined in Eq.\ \eqref{def main prob}. Indeed, it satisfies the detailed balance condition.

\begin{lemma}  \label{lem:detailed-balance} 
	The rate of~Eq.~\eqref{eq:rate} satisfies the detailed balance condition 
	\[
		\bbP_{L,n}(\bslambda) q(\bslambda,\bslambda') = \bbP_{L,n}(\bslambda') q(\bslambda',\bslambda). 
	\]
\end{lemma}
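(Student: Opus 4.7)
The plan is a direct pointwise verification of the detailed balance identity. The rate $q(\bslambda,\bslambda')$ is nonzero only for an elementary move of the form~\eqref{eq:switch}, parametrised by a source site $x$, a target site $y$, the size $j\geq 1$ of the table from which the departing customer leaves $x$, and the size $k\geq 0$ of the table he joins at $y$ (with $k=0$ meaning a new table is opened). It therefore suffices to check the identity for a single such quadruple $(x,y,j,k)$.

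First I would compute $\bbP_{L,n}(\bslambda')/\bbP_{L,n}(\bslambda)$ from~\eqref{def main prob}. Generically four occupation numbers change between $\bslambda$ and $\bslambda'$, so the ratio telescopes into a product of four elementary factorial-and-Boltzmann factors. Using $\theta_j\equiv\theta$, as required for the Chinese restaurant interpretation, the four $\theta$'s cancel pairwise and one finds
\[
\frac{\bbP_{L,n}(\bslambda')}{\bbP_{L,n}(\bslambda)} = \frac{j\, r_{xj}(\bslambda)}{(j-1)\bigl(r_{x,j-1}(\bslambda)+1\bigr)} \cdot \frac{k\, r_{yk}(\bslambda)}{(k+1)\bigl(r_{y,k+1}(\bslambda)+1\bigr)} \cdot \e{V(x/L)-V(y/L)}.
\]

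Next I would compute $q(\bslambda,\bslambda')/q(\bslambda',\bslambda)$ from~\eqref{eq:rate}--\eqref{eq:pplus}. Since $h_n=\theta(\theta+1)\cdots(\theta+n-1)/n!$, the exit rate is $g(n)=n/(\theta+n-1)$; the $\eta_x$ in the denominator of $p_-(j,\lambda_x)$ cancels the numerator of $g(\eta_x)$ in the forward rate, and the analogous cancellation on the reverse side makes the factors $(\theta+\eta_x-1)$ and $(\theta+\eta_y)$ line up across the two rates. After these simplifications, $q(\bslambda,\bslambda')/q(\bslambda',\bslambda)$ equals $t(x,y)/t(y,x)$ times exactly the combinatorial factor displayed above. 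Detailed balance therefore reduces to $t(x,y)\e{-V(x/L)}=t(y,x)\e{-V(y/L)}$, which is the hypothesis~\eqref{eq:pbalance}.

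The main obstacle is a case analysis at the boundaries $j=1$ and $k=0$. When $j=1$ there is no $(x,j-1)=(x,0)$ factor in $\bbP_{L,n}$, and on the reverse side $p_+(j-1,\lambda_x')$ must be replaced by $p_+(0,\lambda_x')=\theta/(\theta+\eta_x-1)$; one checks that the extra $\theta$ supplied by $p_+(0,\cdot)$ exactly compensates the missing $\theta$-contribution from the absent $\bbP$-factor, so the matching with $\bbP_{L,n}$ still goes through. The case $k=0$ is symmetric, and no further difficulty arises.
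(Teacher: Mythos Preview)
Your argument is correct: the direct computation of $\bbP_{L,n}(\bslambda')/\bbP_{L,n}(\bslambda)$ and $q(\bslambda,\bslambda')/q(\bslambda',\bslambda)$ matches up exactly as you describe, and the boundary cases $j=1$ and $k=0$ behave as claimed (the missing $(j-1)$-factor in the probability ratio is replaced by the $\theta$ from $p_+(0;\cdot)$, and symmetrically for $k=0$). One small point you gloss over is the case $x=y$ (allowed since $t(x,x)$ may be nonzero) and the resulting coincidences among the four index pairs; these require a line of extra bookkeeping but introduce no new idea.

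The paper takes a different, more structural route. Instead of computing the full ratio at once, it uses the factorisation $\bbP_{L,n}(\bslambda)=\bbP_{L,n}(\pi_1^{-1}\{\bseta\})\prod_x\nu_{\eta_x}(\lambda_x)$ from Proposition~\ref{prop:conditional}(a) and verifies three separate identities: detailed balance of the zero-range marginal $\bbP_{L,n}\circ\pi_1^{-1}$ with respect to the rates $g(\eta_x)t(x,y)$, and two single-site identities $\nu_{\eta_x}(\lambda_x)p_-(j;\lambda_x)=\nu_{\eta_x-1}(\lambda'_x)p_+(j-1;\lambda'_x)$ (and the analogous one at $y$). Your approach is shorter and self-contained; the paper's decomposition is conceptually cleaner because it isolates exactly which piece of the dynamics is balanced by which piece of the measure, and it is what later justifies the ``instant reshuffling'' variant for non-constant $\theta_j$ (the first identity survives unchanged, only the site-local identities need a new mechanism).
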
 

\begin{proof} 
	We only need to consider the case $\bslambda \neq \bslambda'$. 
	When the customer changes restaurants, we have to check that
	\begin{multline} 
		\bbP_{L,n}(\bslambda) \; g(\eta_x(\bslambda)) \; t(x,y) \; p_-(j;\lambda_x) \; p_+(k;\lambda_y) \\
		= \bbP_{L,n}(\bslambda') \; g(\eta_y(\bslambda')) \; t(y,x) \; p_+(k+1;\lambda'_y) \; p_-(j-1;\lambda'_x). 
	\end{multline} 
Using \eqref{eq:pbalance}, one can check that for all $\bslambda, \bslambda'$ and all $\bseta$, we have the identities
	\be
	\label{detailed balance detailed}
	\begin{split}
		 &\bbP_{L,n}\bigl(\pi_1^{-1}(\{\bseta\})\bigr)  g(\eta_x) t(x,y)
			= \bbP_{L,n}\bigl(\pi_1^{-1}(\{\bseta - \delta_x + \delta_y\})\bigr) g(\eta_y+1) t(y,x), \\
		&\nu_{\eta_x}(\lambda_x) p_-(j;\lambda_x)  = \nu_{\eta_{x} -1}(\lambda'_x) p_+(j-1;\lambda'_x), \\
		&\nu_{\eta_y}(\lambda_y) p_+(k;\lambda_y) = \nu_{\eta_y+1}(\lambda'_y) 
			p_-(k+1;\lambda'_y).
	\end{split}
	\ee
The detailed balance property now follows from Proposition~\ref{prop:conditional}.
\end{proof}

Since the state space $\Lambda_n$ is infinite, we need to check that the Markov process is non-explosive; to this aim we need the following lemma.

\begin{lemma} \label{lem:total-conductance}  
	We have
	\be \label{eq:total-conductance}
		\sumtwo{\bslambda,\bslambda'\in \Lambda_n}{\bslambda \neq \bslambda'} \bbP_{L,n}(\bslambda) q(\bslambda,\bslambda') <\infty. 
	\ee
\end{lemma}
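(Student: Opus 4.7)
The plan is to bound the total outgoing rate from $\bslambda$ by $\sum_x g(\eta_x(\bslambda))$, and then to evaluate its expectation under $\bbP_{L,n}$ via Proposition~\ref{prop:marginals}(a) together with the telescoping identity $h_m g(m) = h_{m-1}$ built into the choice of $g$. First I would sum the rate \eqref{eq:rate} over the auxiliary indices $(y,j,k)$. Using $\sum_y t(x,y) = 1$ and that the weights $p_-(\cdot;\lambda_x)$ and $p_+(\cdot;\lambda_y)$ defined in \eqref{eq:pminus}--\eqref{eq:pplus} are probability distributions, one obtains
\[
 \sum_{\bslambda' \neq \bslambda} q(\bslambda, \bslambda') \;\leq\; \sum_{x,y,j,k} g(\eta_x(\bslambda))\, t(x,y)\, p_-(j;\lambda_x)\, p_+(k;\lambda_y) \;=\; \sum_{x\in \bbZ^d} g(\eta_x(\bslambda)),
\]
the inequality accounting for tuples $(x,y,j,k)$ that map to the same $\bslambda'$ or back to $\bslambda$.

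Next I would take the expectation. Proposition~\ref{prop:marginals}(a) yields
\[
 \sum_{\bslambda\in\Lambda_n} \bbP_{L,n}(\bslambda) \sum_{x} g(\eta_x(\bslambda)) \;=\; \frac{1}{Z_{L,n}} \sum_{x \in \bbZ^d} \sum_{\bseta \in \caN_n} g(\eta_x) \prod_{y \in \bbZ^d} h_{\eta_y} \e{-\eta_y V(y/L)}.
\]
The summand vanishes when $\eta_x = 0$ since $g(0) = 0$. On the event $\eta_x \geq 1$ the defining relation $g(m) = h_{m-1}/h_m$ gives $h_{\eta_x} g(\eta_x) = h_{\eta_x - 1}$. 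The change of variables $\eta_x \mapsto \eta_x - 1$ is a bijection between $\caN_n \cap \{\eta_x \geq 1\}$ and $\caN_{n-1}$ and extracts a factor $\e{-V(x/L)}$ from the weight at site $x$; it therefore collapses the inner sum to $\e{-V(x/L)} Z_{L,n-1}$. This gives
\[
 \sum_{\bslambda \neq \bslambda'} \bbP_{L,n}(\bslambda)\, q(\bslambda,\bslambda') \;\leq\; \frac{Z_{L,n-1}}{Z_{L,n}} \sum_{x \in \bbZ^d} \e{-V(x/L)}.
\]

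Finally I would check both factors on the right are finite under \eqref{eq:zfinite}. From the identity \eqref{eq:zln-cond} one sees that the generating function $\sum_m z^m Z_{L,m}$ converges for some $z > 0$, so in particular $Z_{L,n-1} < \infty$. Keeping only the $j=1$ term of \eqref{eq:zfinite} gives $\theta\, z \sum_x \e{-V(x/L)} < \infty$, and since $\theta > 0$ in the Chinese restaurant setting the spatial sum $\sum_x \e{-V(x/L)}$ is finite. The essential --- and only nonroutine --- step is the telescoping identity $h_m g(m) = h_{m-1}$, which converts summation over $\caN_n$ into summation over $\caN_{n-1}$; this identity is precisely the mechanism underlying both the reversibility of Lemma~\ref{lem:detailed-balance} and the finite total rate asserted here, while the rest of the argument is bookkeeping.
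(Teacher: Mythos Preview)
Your argument is correct, but it takes a different route from the paper. The paper exploits the fact that in the Ewens setting $\theta_j\equiv\theta$ one has the explicit formula $g(m)=m/(\theta+m-1)$, which gives the \emph{deterministic} bound
\[
\sum_{\bslambda'\neq\bslambda} q(\bslambda,\bslambda') \;\leq\; \sum_{x} g(\eta_x(\bslambda)) \;=\; \sum_{x}\frac{\eta_x}{\theta+\eta_x-1} \;\leq\; \frac{1}{\theta}\sum_x \eta_x \;=\; \frac{n}{\theta},
\]
uniformly in $\bslambda$; summing against the probability measure $\bbP_{L,n}$ then yields $n/\theta<\infty$ with no further hypotheses on $V$. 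Your computation of the expectation via the telescoping identity $h_m g(m)=h_{m-1}$, leading to $\bbE_{L,n}\bigl[\sum_x g(\eta_x)\bigr]=(Z_{L,n-1}/Z_{L,n})\sum_x\e{-V(x/L)}$, is perfectly valid and has the merit of not using the explicit Ewens form of $g$: it would apply unchanged to general weights $(\theta_j)$ with $g(m)=h_{m-1}/h_m$, e.g.\ to the ``instant reshuffling'' variant mentioned at the end of Section~\ref{sec Chinese}. The trade-off is that you must invoke the summability assumption~\eqref{eq:zfinite} to conclude, whereas the paper's argument uses only that $\bbP_{L,n}$ is a probability measure.
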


\begin{proof} 
We have
\be
\begin{split}
\sum_{\bslambda'} q(\bslambda,\bslambda') &= \sum_{x,y\in\bbZ^{d}} \sum_{j,k\in\bbN} g(\eta_{x}(\bslambda)) \; t(x,y) \; p_{-}(j,\lambda_{x}) \; p_{+}(k,\lambda_{y}) \\
&= \sum_{x} g(\eta_{x}(\bslambda)) \\
&= \sum_{x} \frac{\eta_{x}}{\theta+\eta_{x}-1} \\
&\leq \frac n\theta.
\end{split}
\ee
\end{proof}

Define $q(\bslambda,\bslambda) =  -\sum_{\bslambda \in \Lambda_n,\, \bslambda' \neq \bslambda} q(\bslambda,\bslambda')$ and  $Q = (q(\bslambda,\bslambda'))_{\bslambda,\bslambda' \in \Lambda_n}$. For later purpose we note 
\be \label{eq:diagonal}
	q(\bslambda,\bslambda) = -  g(\eta_x(\bslambda)) \sum_{y\neq x} t(x,y) - g(\eta_x(\bslambda)) t(x,x) \sum_{k \neq j} p_+(k;\lambda_x^{-j}). 
\ee
Consider the backward Kolmogorov equations 
\be \label{eq:kolmogorov} 
	 \frac{\partial P_t}{\partial t}(\bslambda,\bslambda') = \bigl( Q P_t\bigr) (\bslambda,\bslambda') =  \sum_{\bslambda'' \in \Lambda_n} 
		q(\bslambda,\bslambda'') P_t(\bslambda'',\bslambda'),
\ee
with the standard initial condition $P_0(\bslambda,\bslambda') = \delta_{\bslambda,\bslambda'}$.

\begin{proposition} \label{prop:process}
	There is a unique Markov semi-group $(P_t(\bslambda,\bslambda'))_{t\geq 0}$ solving the backwards Kolmogorov equations for the rates~\eqref{eq:rate}. It defines a Markov process $(\bslambda(t))_{t\geq 0}$ with state space $\Lambda_n$, which has $\bbP_{L,n}$ as a reversible (hence stationary) measure. 
\end{proposition}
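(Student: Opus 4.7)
The plan is to build the semigroup by standard arguments for continuous-time Markov chains with uniformly bounded jump rates, and then to derive reversibility and stationarity from the detailed balance relation of Lemma~\ref{lem:detailed-balance}. The essential observation, already implicit in the proof of Lemma~\ref{lem:total-conductance}, is that
\begin{equation*}
    \sup_{\bslambda \in \Lambda_n} \sum_{\bslambda' \neq \bslambda} q(\bslambda,\bslambda') \leq \frac{n}{\theta},
\end{equation*}
so $Q$ is a bounded linear operator on $\ell^\infty(\Lambda_n)$. In particular, there is no risk of explosion.

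My first step would be to define $P_t := \e{tQ}$ via the norm-convergent series $\sum_{k\geq 0} (tQ)^k/k!$. A term-by-term check shows $P_0=I$, $P_{s+t}=P_s P_t$, and $P_t$ solves both the backward and forward Kolmogorov equations. The standard argument for $Q$-matrices with uniformly bounded row sums (see e.g.\ Norris, \emph{Markov Chains}, Thm.~2.8.4) shows that $P_t(\bslambda,\bslambda')\geq 0$ and that each row sums to $1$, so $P_t$ is an honest Markov semigroup. Uniqueness among sub-stochastic semigroups solving~\eqref{eq:kolmogorov} is a consequence of non-explosion: the minimal non-negative solution is the only one, and our $P_t$ coincides with it. The corresponding càdlàg process $(\bslambda(t))_{t\geq 0}$ is then constructed in the usual way (holding times with parameter $-q(\bslambda,\bslambda)$ and embedded jump chain with transition probabilities $q(\bslambda,\bslambda')/(-q(\bslambda,\bslambda))$), and non-explosion guarantees that the trajectory is defined for all $t\geq 0$.

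To establish reversibility, i.e.\ the symmetry of $\bbP_{L,n}(\bslambda) P_t(\bslambda,\bslambda')$ in $(\bslambda,\bslambda')$, I would proceed by induction on $k$: Lemma~\ref{lem:detailed-balance} is the base case $k=1$ applied to $Q$ (extended to the diagonal, where symmetry is trivial); the inductive step uses the computation
\begin{equation*}
    \bbP_{L,n}(\bslambda) \, Q^{k+1}(\bslambda,\bslambda') = \sum_{\bslambda''} \bbP_{L,n}(\bslambda) Q(\bslambda,\bslambda'') Q^k(\bslambda'',\bslambda'),
\end{equation*}
in which one substitutes the inductive hypothesis applied to the last factor and then the base case applied to the first. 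Summing the series in $t$ gives $\bbP_{L,n}(\bslambda) P_t(\bslambda,\bslambda') = \bbP_{L,n}(\bslambda') P_t(\bslambda',\bslambda)$. Summing over $\bslambda$ yields $\bbP_{L,n} P_t = \bbP_{L,n}$, i.e.\ stationarity.

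No step is particularly delicate; the uniform bound on row sums is what removes all analytic issues. The only point where a little care is required is in correctly invoking a general theorem to obtain \emph{uniqueness} among sub-stochastic solutions of the backward equation, because the proposition is phrased in terms of the backward (rather than forward) Kolmogorov equations and one must check that non-explosion indeed gives uniqueness in that formulation.
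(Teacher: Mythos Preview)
Your proof is correct and in fact more elementary than the paper's. The paper's argument invokes only the \emph{statement} of Lemma~\ref{lem:total-conductance} (finiteness of the $\bbP_{L,n}$-weighted total conductance) together with Lemma~\ref{lem:detailed-balance}, and then appeals to a general non-explosion criterion for symmetrizable jump processes due to Chen (Corollary~3.8 in~\cite{chen}). You instead extract from the \emph{proof} of Lemma~\ref{lem:total-conductance} the stronger pointwise bound $\sup_{\bslambda}\sum_{\bslambda'\neq\bslambda} q(\bslambda,\bslambda')\leq n/\theta$, which makes $Q$ a bounded operator on $\ell^\infty(\Lambda_n)$ and reduces everything to the standard theory of continuous-time Markov chains with uniformly bounded rates. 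Your route avoids the external reference and gives an explicit, self-contained construction; the paper's route, by contrast, would still go through under the weaker hypothesis of finite total conductance even if the uniform bound failed (which could matter for the non-Ewens variants mentioned at the end of Section~\ref{sec Chinese}). Both lead to the same semigroup, and the paper's remark that $P_t=\exp(tQ)$ on $\ell^\infty(\Lambda_n)$ is exactly what your construction makes explicit.
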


Note that $(P_t)$ defines a strongly continuous contraction semi-group in $\ell^\infty(\Lambda_n)$ with infinitesimal generator $Q$: $P_t = \exp(tQ)$.

\begin{proof}
	This follows from Lemmas~\ref{lem:detailed-balance} and~\ref{lem:total-conductance}, 
	and the non-explosion criterion given in~\cite{chen}, Corollary 3.8. 
\end{proof}

\begin{proposition} \label{prop:subprocess} 
	Let $(\bslambda(t))_{t\geq 0}$ be the Markov process from Proposition~\ref{prop:process}, with arbitrary initial law. 
	Then the  marginal $(\bseta(\bslambda(t)))_{t\geq 0}$ is a Markov process 
	with transitions  $(\eta_x,\eta_y) \to (\eta_x - 1,\eta_y+1)$ ($x\neq y$, all other occupation numbers unchanged) happening at rate
	$g(\eta_x) p(x,y)$. 
\end{proposition}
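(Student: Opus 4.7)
The plan is to apply the strong lumpability criterion for continuous-time Markov chains (due to Dynkin; see also Kemeny--Snell): if $(X_t)$ is a Markov chain on a countable state space and $f$ a map such that for every pair of level sets $C = f^{-1}(\{a\})$, $C' = f^{-1}(\{b\})$ the aggregate rate $\sum_{y \in C'} q(x,y)$ depends on $x \in C$ only through $a$, then $f(X_t)$ is Markov under \emph{every} initial law, with those aggregate rates as transition rates. Here the map is $\pi_1: \bslambda \mapsto \bseta(\bslambda)$ onto $\mathcal{N}_n$, with level sets $C_{\bseta} := \pi_1^{-1}(\{\bseta\})$.

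The key verification is a direct computation from the rate structure~\eqref{eq:switch}--\eqref{eq:rate}. Fix $\bslambda$ with $\bseta(\bslambda) = \bseta$ and sites $x \neq y$, and set $\bseta' = \bseta - \delta_x + \delta_y$. The transitions $\bslambda \to \bslambda'$ landing in $C_{\bseta'}$ are parametrized by the size $j \geq 1$ of the table the customer leaves at $x$ and the size $k \geq 0$ of the table he joins at $y$. Summing~\eqref{eq:rate} and using the two elementary identities
\[
\sum_{j \geq 1} p_-(j; \lambda_x) = \frac{1}{\eta_x} \sum_{j \geq 1} j r_{xj} = 1, \qquad \sum_{k \geq 0} p_+(k; \lambda_y) = \frac{\theta + \sum_{k \geq 1} k r_{yk}}{\eta_y + \theta} = 1,
\]
one obtains
\[
\sum_{\bslambda' \in C_{\bseta'}} q(\bslambda, \bslambda') = g(\eta_x) \, t(x,y),
\]
which depends on $\bslambda$ only through $\eta_x$. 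Transitions with $y = x$ leave $\bseta$ invariant and can be ignored. This establishes strong lumpability with precisely the rates claimed.

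The algebra is elementary; the point that requires a modest amount of care is the extension of the lumpability theorem to the infinite state space $\Lambda_n$. This is handled by the same non-explosion argument used for the full process: the bound $\sum_x g(\eta_x) \leq n/\theta$ from the proof of Lemma~\ref{lem:total-conductance} controls the total exit rate uniformly in $\bslambda$, hence also in $\bseta$, so the projected chain is non-explosive and the standard lumpability reasoning---via the embedded jump chain, or direct inspection of the backward Kolmogorov equations~\eqref{eq:kolmogorov}---goes through without modification.
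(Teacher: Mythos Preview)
Your proof is correct and follows essentially the same route as the paper's: both verify the lumpability condition by summing the rates~\eqref{eq:rate} over all $\bslambda'$ with a prescribed $\bseta'$, using $\sum_j p_-(j;\lambda_x)=\sum_k p_+(k;\lambda_y)=1$ to obtain $g(\eta_x)\,t(x,y)$, and then invoke a function-of-a-Markov-chain criterion (the paper cites Burke--Rosenblatt and writes the condition as the intertwining $QR=R\tilde Q$, whereas you phrase it as Dynkin/Kemeny--Snell strong lumpability). The paper also handles the diagonal block explicitly via~\eqref{eq:diagonal}, but your observation that the $y=x$ transitions leave $\bseta$ unchanged and hence drop out of the off-diagonal lumpability check is equivalent.
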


Hence the site occupation numbers evolve according to a (possibly inhomogeneous) zero-range process; the total rate for a customer to leave a restaurant is $g(\eta_x) (1 - p(x,x))$.

\begin{proof} 
	Observe
	\be \label{eq:subprocess1}
		\sum_{\bslambda'\vdash \bseta'} q(\bslambda,\bslambda')  = g(\eta_x(\bslambda)) t(x,y) \sum_{j,k} p_-(j;\lambda_x) p_+(k;\lambda_y) =   g(\eta_x(\bslambda)) t(x,y)
	\ee	
	if $\bseta' = \bseta(\bslambda) - \delta_x + \delta_y$, and in view of Eq.~\eqref{eq:diagonal},
	\be \label{eq:subprocess2}
		\sum_{\bslambda\vdash \bseta} q(\bslambda,\bslambda)  
		= g(x,\eta_x) t(x,x) \sum_{k\neq j} p_+(k;\lambda_x^{-j}) + 
		q(\bslambda,\bslambda) 
		 = -  g(x,\eta_x) \sum_{y \neq x} t(x,y).
	\ee
	Let $\tilde q(\bseta,\bseta'):= g(\eta_x) t(x,y)$ if $\bseta' = \bseta - \delta_x + \delta_y$ for some $x,y\in \bbZ^d$ with $x\neq y$, $\tilde q(\bseta,\bseta') = 0$ if $\bseta' \neq \bseta$ is not of the form just described, $\tilde q(\bseta,\bseta) := 1- \sum_{\bseta' \in \mathcal{N}_n,\bseta' \neq \bseta} \tilde q(\bseta,\bseta')$, and $\tilde Q:= \bigl( \tilde q(\bseta,\bseta') \bigr)_{\bseta,\bseta' \in \mathcal{N}_n} $. An argument similar to the one used in the proof of Proposition~\ref{prop:process} shows that $\tilde P_t:= \exp(t \tilde Q)$ uniquely defines a Markov semi-group on $\mathcal{N}_n$, which is an inhomogeneous zero-range process. Eqs.~\eqref{eq:subprocess1} and~\eqref{eq:subprocess2} become 
	\be
		\sum_{\bslambda' \vdash \bseta'} q(\bslambda,\bslambda') = \tilde q(\bseta(\bslambda),\bseta')  
	\ee
	for all $\bslambda\in \Lambda_n$, $\bseta' \in \mathcal{N}_n$. The latter identity can be conveniently recast in operator language: set  
$
 R(\bslambda,\bseta):= \mathbf{1}_{\{\bslambda \vdash \bseta\}} = \delta_{\bseta,\bseta(\bslambda)}. 
$
We have $Q R = R\tilde Q$, hence $P_t R = R \tilde P_t$ for all $t\geq 0$, i.e., 
\be 
      \sum_{\bslambda' \vdash \bseta'} P_t(\bslambda,\bslambda') = \tilde P_t(\bseta(\bslambda),\bseta').  
\ee	
This condition is sufficient to ensure that $\bseta(\bslambda(t))_{t\geq 0}$ is Markovian with transition rates $\tilde q(\bslambda,\bslambda')$, see Theorem 4 in~\cite{burke-rosenblatt} (the theorem is formulated for finite state spaces but holds in our set-up as well). 
\end{proof}

Thus we have constructed a process combining the zero-range and Chinese restaurant process  for which the measures $\bbP_{L,n}$ are reversible. Finally, let us address the non-Ewens case where the parameters $(\theta_{j})$ depend on $j$. There exist Markov processes such that the measure 
\eqref{def main prob} is stationary. For instance, take a zero-range process on the $(\eta_{x})$, together with an ``instant reshuffling'' of the partitions at the two locations where a customer has been lost or gained. The transition rate of the process is
\[
	g(\eta_x) \; t(x,y) \; \nu_{\eta_{x}-1}(\lambda'_x) \; \nu_{\eta_y+1} (\lambda'_y). 
\]
One can check that Propositions~\ref{prop:process} and \ref{prop:subprocess} remain true. It would be nice to discover other, more natural Markov processes.

\subsection{Coagulation-fragmentation processes}  \label{sec:coag-frag}
For our second process, let $(a_j)_{j\geq 1}$ and $(b_j)_{j\geq 2}$ be positive sequences such that for all $j$, 
\be \label{eq:ajbj}
	a_j \frac{\theta_j}{j} \theta_1 = b_{j+1} \frac{\theta_{j+1}}{j+1}.
\ee
In addition to \eqref{eq:pbalance}, we assume that for some $z>0$ satisfying Eq.~\eqref{eq:zfinite}, we have
\be
	\sum_{x,y\in\bbZ^d} \e{-V(x/L)} t(x,y)<\infty,\quad 
		\sum_{x\in \bbZ^d,j\geq 2} b_{j} \frac{\theta_{j}}{j} z^j \e{- j V(x/L)} <\infty.  
\ee
As the process presented here is a generalization of the stochastic version of the Becker--D{\"o}ring model presented in Sec.~\ref{sec:becker-doering}, we use the vocabulary of nucleation/particle clustering and we speak of particles and clusters (or droplets) instead of customers or restaurants. 
 \emph{Monomers} are clusters of size $1$.
We allow three types of transitions: 
\begin{itemize}
	\item \emph{Coagulation} at a given site $x$: a $j$-cluster ($j\geq 1$) and a $1$-cluster coagulate to a $(j+1)$-cluster, i.e., $(r_{x,j-1},r_{j,x},r_{x,j+1})\to (r_{x,j-1}-1,r_{x,j}-1,r_{x,j+1}+1)$. 
	This occurs at rate $a_j r_{xj}(\bslambda) r_{x1}(\bslambda)$ if $j \neq 1$, and at rate $a_{1} r_{x1}(\bslambda) (r_{x1}(\bslambda) - 1)$ if $j =1$. 
	\item \emph{Fragmentation} at a given site $x$: a monomer departs from a $j$-cluster ($j\geq 2$). 
	The transition $(r_{x1},r_{x,j-1},r_{x,j}) \to (r_{x1}+1,r_{x,j-1}+1,r_{x,j}-1)$ 
	occurs at rate $b_{j} r_{xj}(\bslambda)$. 
	\item \emph{Jump of a monomer} from site $x$ to site $y\neq x$:  the transition 
	$(r_{x1},r_{y1}) \to (r_{x1}-1,r_{y1}+1)$ occurs at rate $r_{x1}(\bslambda) t(x,y)$.  
\end{itemize} 
There are many possible generalizations: for example, we could allow for the jumping of whole clusters rather than only monomers, or allow two groups of size larger than one to coalesce, or take coagulation-fragmentation rates proportional to powers of the occupation numbers $r_{xj}^\gamma$, see \cite{durrett-granovsky-gueron}. We stick to the process presented here for notational simplicity, and also because it is closest in spirit to the original Becker--D{\"o}ring model of nucleation~\cite{becker-doering} set in the framework of kinetic gas theory: monomers are mobile particles of a dilute gas, clusters of size $j\geq 2$ are small chunks of condensed precipitate and they are deemed immobile (or at least very slow compared to gas particles). 

\begin{proposition} \label{prop:coag-frag-process} 
	The transition rates specified above define uniquely a Markov semi-group with state space $\Lambda_n$. The measure $\bbP_{L,n}$ is reversible, hence stationary, for the associated Markov process. 	
\end{proposition}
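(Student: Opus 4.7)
The plan is to verify that the rates satisfy detailed balance with respect to $\bbP_{L,n}$ for each of the three transition types, and then to invoke the same non-explosion argument as in Proposition~\ref{prop:process}. The verification is essentially a direct computation using the explicit product form of $\bbP_{L,n}$.

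First I would compute the ratio $\bbP_{L,n}(\bslambda')/\bbP_{L,n}(\bslambda)$ from the definition~\eqref{def main prob} for each admissible transition. For a coagulation at site $x$, the transition $(r_{x,1},r_{x,j},r_{x,j+1})\to(r_{x,1}-1,r_{x,j}-1,r_{x,j+1}+1)$ modifies three factors; the factorials give $r_{x,1}(\bslambda)\,r_{x,j}(\bslambda)/(r_{x,j+1}(\bslambda)+1)$, the $\theta$-weights give $(\theta_{j+1}/(j+1))/(\theta_1\,\theta_j/j)$, and the $V(x/L)$-factors cancel because the total mass at $x$ is unchanged ($1+j=j+1$). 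The forward rate is $a_j r_{x,j}(\bslambda) r_{x,1}(\bslambda)$ (with the obvious modification $a_1 r_{x,1}(r_{x,1}-1)$ when $j=1$), while the reverse fragmentation rate is $b_{j+1}(r_{x,j+1}(\bslambda)+1)$. Detailed balance then reduces exactly to the consistency condition~\eqref{eq:ajbj}. For a monomer jump $(r_{x,1},r_{y,1})\to(r_{x,1}-1,r_{y,1}+1)$ the probability ratio is $r_{x,1}(\bslambda)/(r_{y,1}(\bslambda)+1)\cdot \e{V(x/L)-V(y/L)}$, and detailed balance against the reverse jump with rate $(r_{y,1}(\bslambda)+1)\,t(y,x)$ reduces to~\eqref{eq:pbalance}.

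Next I would deal with well-posedness by bounding the total outgoing rate from any state $\bslambda\in\Lambda_n$ uniformly in $\bslambda$. Since $\sum_{x,j} j r_{xj}(\bslambda)=n$, we have $r_{xj}(\bslambda)=0$ for $j>n$, and the total number of clusters $\sum_{x,j} r_{xj}(\bslambda)$ is at most $n$. Therefore the fragmentation rate is bounded by $(\max_{2\leq j\leq n} b_j)\cdot n$, the monomer-jump rate equals $\sum_x r_{x,1}(\bslambda)\leq n$ (using $\sum_y t(x,y)=1$), and the coagulation rate is at most $(\max_{1\leq j\leq n} a_j)\cdot n^2$. Hence the diagonal entries $|q(\bslambda,\bslambda)|$ are bounded by a constant $C(n)$ depending only on $n$ and on $a_1,\ldots,a_n,b_2,\ldots,b_n$. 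In particular $\sum_{\bslambda'\neq\bslambda}\bbP_{L,n}(\bslambda)q(\bslambda,\bslambda')<\infty$, so the analogue of Lemma~\ref{lem:total-conductance} holds.

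Finally, I would conclude exactly as in Proposition~\ref{prop:process}: uniqueness of the Markov semi-group solving the backward Kolmogorov equations follows from the non-explosion criterion in Chen~\cite{chen}, Corollary~3.8, and reversibility of $\bbP_{L,n}$ is then immediate from the detailed balance just verified.

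The main substantive input is the algebraic identity~\eqref{eq:ajbj}, which is precisely engineered so that the coagulation-fragmentation ratio of rates matches the ratio of Gibbs weights; once this is recognized, the remaining work is bookkeeping. The only mild subtlety is the boundary case $j=1$ in coagulation, where two monomers merge into a dimer and the combinatorial factor $r_{x,1}(r_{x,1}-1)$ must be used instead of a product of two distinct occupation numbers — one checks directly that the same ratio identity still produces~\eqref{eq:ajbj} with $j=1$.
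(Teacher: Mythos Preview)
Your argument is correct, and the detailed-balance verification is exactly what the paper leaves to the reader. The only genuine difference is in the non-explosion step: the paper estimates the total conductance $\sum_{\bslambda\neq\bslambda'}\bbP_{L,n}(\bslambda)q(\bslambda,\bslambda')$ by passing to the grand-canonical measure $\bbP_L^z$, using the independence of the Poisson variables $R_{xj}$ to compute $\bbE_L^z\bigl[\sum_{x,j}a_jR_{xj}R_{x1}\bigr]$ explicitly, and then invoking the integrability hypotheses stated just before the proposition (in particular $\sum_{x,j\geq 2} b_j\frac{\theta_j}{j}z^j\e{-jV(x/L)}<\infty$). Your route is more elementary: you exploit directly the canonical constraint $\sum_{x,j} j r_{xj}=n$ to get a uniform bound $|q(\bslambda,\bslambda)|\leq C(n)$ on the exit rate, which immediately yields finiteness of the total conductance (and in fact gives non-explosion without even needing Chen's criterion, since uniformly bounded exit rates preclude explosion outright). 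This buys you a cleaner proof that does not use the extra integrability assumptions the paper imposes; the paper's approach, on the other hand, illustrates once more the usefulness of the grand-canonical coupling developed in Section~\ref{sec:poisson}.
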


\begin{proof}
We leave the proof of the detailed balance conditions to the reader and look at the non-explosion criterion from Lemma~\ref{lem:total-conductance}.
Let $z>0$ be such that Eq.~\eqref{eq:zfinite} holds. The contribution to $\sum_{\bslambda\neq \bslambda'} \bbP_{L}^z(\bslambda) q(\bslambda,\bslambda')$ from coagulation of $j$-clusters ($j\neq 1$)
with monomers is estimated as 
\begin{multline} 
	\sum_{\bslambda\in \cup_n \Lambda_n} \bbP_L^z(\bslambda) \sum_{x\in \bbZ^d, j\in \bbN} a_j r_{xj}(\bslambda) r_{x1}(\bslambda) 
		 = \sum_{x\in \bbZ^d, j\in \bbN} a_j \Bigl(\theta_1 z\e{-V(x/L)}\Bigr) 
			\Bigl( \frac{\theta_j}{j} z^j \e{- j V(x/L)} \Bigr)\\
	= \sum_{x\in \bbZ^d,j\geq 1} b_{j+1}  \frac{\theta_{j+1}}{j+1} z^{j+1} \e{- (j+1) Vx/L)} <\infty.
\end{multline}
	We have used that under $\bbP_{L}^z$ the $R_{xj}$s are independent Poisson variables, and we have used the assumption~\eqref{eq:ajbj}.
The contributions of coagulation of monomers, fragmentation, and random walk can be evaluated in a similar way. 
 We conclude as in Lemma~\ref{lem:total-conductance} and Proposition~\ref{prop:process}. 
\end{proof}

It is natural to ask about the evolution of the marginals $\bseta(\bslambda(t))$ and $\bsr(\bslambda(t))$. Let us first look at the site occupation numbers. The rate for a particle to leave a site depends on the number of monomers at this site, i.e., it is not a function of the total number of particles at the site alone. Therefore $\bseta(\bslambda(t))$ is not Markovian. However, it is natural to approximate the process by replacing the jump rate $r_{x1}t(x,y)$ by the conditional expectation 
\be \label{eq:eff-zrp}
	t(x,y) \bbE_{L,n}\bigl( r_{x1}(\bslambda) \big| \eta_x(\bslambda) = \eta_x\bigr) 
		= t(x,y) \sum_{r_{x1}\geq 1} r_{x1}  \nu_{\eta_x}( r_{x1}) = \frac{\theta_1 h_{\eta_x-1}}{h_{\eta_x}} t(x,y),
\ee 
which is  the transition rate for  a zero-range process (for the last equality, see Proposition~2.1 in~\cite{ercolani-ueltschi}).  
Now look at the cluster size counts $r_{j} = \sum_x r_{xj}$. Remember that the conditional law of $r_{xj}$ is binomial with $r_j$ trials and success probability $p_{xj} \propto \exp( - jV(x/L))$. 
Again, $\bsr(\bslambda(t))_{t\geq 0}$ will not be Markovian, but it is natural to approximate it by a process with effective coagulation rate 
\be \label{eq:eff-coagj}
	\sum_{x\in \bbZ^d} a_j \bbE_{L,n} \Bigl[ r_{x1}(\bslambda) r_{xj}(\bslambda) \Big| r_1(\bslambda) = r_1,\ r_j(\bslambda) = r_j \Bigr] 
	 = a_j r_1 r_j \sum_{x\in \bbZ^d} p_{x1} p_{xj},  
\ee
for $j\neq 1$. Similar expressions hold for coagulation of two monomers and for the effective fragmentation rate. 
In the case of square traps, $\exp( - V(x/L))=\mathbf{1}_{[0,L)^d}(x)$ with $L\in \bbN$, we have  $p_{xj} = L^{-d} \mathbf{1}_{[0,L)^d}(x)$ and the effective rates are exactly those of the Becker--D{\"o}ring process from Section~\ref{sec:becker-doering}.

\section{Condensation}  \label{sec rv}  

We study the asymptotic behavior in the thermodynamic limit $n\to \infty$, $L\to \infty$ at fixed density $\rho = n/L^d$. The model may undergo a {\it phase transition} when the density varies. Above a certain {\it transition density} $\rho_{\rm c}$, a {\it condensation} occurs that is characterized by the presence of large components.

There are two natural definitions of condensation, one in terms of site occupation numbers (number of customers of a restaurant), used in the zero-range process, and the other in terms of random partitions (number of occupants of a table). We shall actually observe that both definitions are equivalent. The results of this section can be found in Theorems \ref{thm trans dens} and \ref{thm infinite elements}. The expressions are familiar for the zero-range process and for spatial permutations. The motivation of this section is to present a novel, unified perspective.

Condensation is measured by the following ``order parameters'':
\begin{align}
\label{def nu oo}
	\nu_\infty &=\lim_{K\to\infty} \limtwo{n\to\infty}{\rho L^{d}=n} \bbE_{L,n} \Bigl( \frac1n \sum_{j>K} j \, R_{j} \Bigr) \\
	\mu_\infty&= \lim_{K\to\infty} \limtwo{n\to\infty}{\rho L^{d}=n} \bbE_{L,n}\Bigl( \frac1n \sum_{x : \eta_{x}>K} \eta_{x}(\bslambda) \Bigr).
\end{align} 
(The existence of the limits over $n$ is proved in the proof of Theorem~\ref{thm infinite elements} below. The existence of the limits over $K$ is guaranteed by monotonicity.)
 We say that condensation occurs if $\mu_\infty>0$ or $\nu_\infty>0$, and refer to $\mu_\infty$ or $\nu_\infty$ as the \emph{condensate fraction}. 
In principle, it might happen that $ \nu_\infty< \mu_\infty$. But Theorem~\ref{thm trans dens} below shows that $\nu_\infty=\mu_\infty$; the two definitions of condensation are equivalent. 

Under some assumptions, we prove the existence of a transition density $\rho_{\rm c}$ such that the order parameters are zero for $\rho \leq \rho_{\rm c}$ and positive for $\rho > \rho_{\rm c}$. In order to define the transition density, let
\be \label{eq:rhoz}
	\rho(z) = \sum_{j\geq 1} \theta_j z^j \int_{\bbR^d} \e{- j V(x)} \dd x.  
\ee
Let $z_{\rm c} \in (0,\infty]$ be the radius of convergence of the power series above. The \emph{transition density} is
 \be
 \label{def crit dens}
 \rho_{\rm c} = \rho(z_{\rm c}).
 \ee
 It is possible that $\rho_{\rm c}$ is infinite, meaning that no transition takes place. In the case $z_{\rm c}=1$ we get
 \be
 \label{Einstein}
 \rho_{\rm c} = \sum_{j\geq1} \theta_{j} \int_{\bbR^{d}} \e{-jV(x)} \dd x.
 \ee
 The transition density may be finite for two reasons (or a combination of both). First, a trap such as $V(x) = \|x\|^{\delta}$ yields $\int \e{-jV} \sim j^{-d/\delta}$, which is summable if $\delta<d$. This is the case of the ideal Bose gas of Section \ref{sec ideal Bose}, where $\theta_{j} \equiv 1$ and $\delta=2$. Eq.~\eqref{Einstein} is then the famous formula derived by Einstein in 1924. Second, the parameters $\theta_{j}$ may be summable. This is the case of the  invariant measure of the zero-range process of Section~\ref{sec zero-range} in the regime studied by Evans \cite{evans96}. There is no confining potential here, $\int \e{-jV} \equiv 1$. A very different situation is particle clustering, see Section \ref{sec:droplets}, where the parameters $\theta_{j}$ are given by certain cluster integrals \cite{jkm}. The general form of \eqref{eq:rhoz} with $\theta_{j} \not\equiv 1$ and $V \not\equiv 0$ appeared in the context of spatial random permutations with cycle weights \cite{BU-prl,BU2} discussed in Section \ref{sec spatial permutations}. A variant of the alternative expression~\eqref{eq:rhoz} for zero-range processes with disorder can be found e.g.\  in~\cite{godreche-luck} (Eq.\ (4.2)).

If we are given parameters $h_{n}$ instead of the $\theta_{j}$, we can consider the alternative expression
\be \label{eq:rhoz-hk}
	\rho(z)= \int_{\bbR^d} \frac{\sum_{n\geq 1} n h_n z^n \e{-n V(x)} }{\sum_{n\geq 0} h_n z^n \e{-n V(x)}}\, \dd x,
\ee
where the integrand is by definition equal to $0$ when $V(x) = \infty$. Indeed, the right side of Eq.~\eqref{eq:rhoz-hk} is equal to
\[
\int_{\bbR^d} z \frac{\partial}{\partial z} \log \Bigl( \sum_{k\geq 0} h_k z^k \e{- k V( x)} \Bigr) \dd x,
\]
and using \eqref{eq:levy} with $z \mapsto z \e{-V(x)}$ we get $\rho(z)$ in \eqref{eq:rhoz}.
The expression \eqref{eq:rhoz-hk} is the density-activity series for the (inhomogeneous) zero-range processes. 

We make the following assumptions throughout this section.

\begin{assumption}
\label{ass 1}
Assume that $V : \bbR^{d} \to [0,\infty]$ is continuous at $0$, with $V(0)=0$, and that for every $j\geq1$, we have
\[
\frac1{L^{d}} \sum_{x\in\bbZ^{d}} \e{-j V(x/L)} \to \int_{\bbR^{d}} \e{-j V(x)} \dd x
\]
as $L\to\infty$.
\end{assumption}

It follows from this assumption that the series $\sum_{j\geq1} \frac{\theta_{j}}j z^{j}$ has the same radius of convergence $z_{\rm c}$ as the series \eqref{eq:rhoz} for $\rho(z)$.
The next theorem claims that $\rho_{\rm c}$ is indeed the transition density.

\begin{theorem}[Condensation]
\label{thm trans dens}
Under Assumption \ref{ass 1}, we have  
	\[ \label{eq:equivalence}
		\nu_\infty = \mu_\infty =  \max\bigl(0, 1 - \tfrac{\rho_{\rm c}}\rho\bigr). 
	\]
\end{theorem}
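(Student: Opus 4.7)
The plan is to exploit the Poisson representation of Section~\ref{sec:poisson}: under the grand canonical measure $\bbP_L^z$ the variables $R_{xj}$ are independent Poissons, and $\bbP_{L,n}(\,\cdot\,) = \bbP_L^z(\,\cdot\, \mid N = n)$ for any $z$ satisfying~\eqref{eq:zfinite}. Combining the Poisson identity $k \, \bbP(R_{xj}=k) = \lambda_{xj} \bbP(R_{xj}=k-1)$ with~\eqref{eq:zln-cond} yields the key formulas
\begin{equation}
\bbE_{L,n}[j R_j] = \theta_j \Bigl( \sum_{x\in \bbZ^d} \e{-jV(x/L)} \Bigr) \frac{Z_{L,n-j}}{Z_{L,n}},
\qquad
\bbE_{L,n}[j R_{xj}] = \theta_j \e{-jV(x/L)}\, \frac{Z_{L,n-j}}{Z_{L,n}} .
\end{equation}
All expectations appearing in the definitions of $\nu_\infty$ and $\mu_\infty$ thus reduce to the asymptotics of $Z_{L,n-j}/Z_{L,n}$.

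Define $z^*(\rho) := z(\rho)$, the unique element of $(0,z_{\rm c})$ with $\rho(z(\rho)) = \rho$, when $\rho < \rho_{\rm c}$, and $z^*(\rho) := z_{\rm c}$ when $\rho \geq \rho_{\rm c}$. The central technical step is to establish
\begin{equation}
\lim_{L \to \infty,\, n = \rho L^d} \frac{Z_{L,n-j}}{Z_{L,n}} = z^*(\rho)^j \qquad (j \in \bbN) .
\end{equation}
Using the identity $Z_{L,n-j}/Z_{L,n} = z^j \, \bbP_L^z(N = n-j)/\bbP_L^z(N = n)$ with $z = z^*(\rho)$ reduces the claim to showing the probability ratio tends to $1$. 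In the subcritical case, Assumption~\ref{ass 1} gives $L^{-d} \bbE_L^{z(\rho)}[N] \to \rho$, and a local central limit theorem for the independent integer-valued sum $N = \sum_x H_x$ does the job. In the supercritical case, $\{N=n\}$ is an atypically large event under $\bbP_L^{z_{\rm c}}$, and one must establish the ``single big jump'' picture: under the conditioned measure, one site carries excess mass of order $(\rho-\rho_{\rm c}) L^d$ while the remaining $H_y$'s are typical, so that decreasing $n$ by $j$ merely trims the condensate and leaves the probability ratio tending to~$1$.

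Granted this asymptotic, Assumption~\ref{ass 1} lets us pass to the limit at fixed $K$:
\begin{equation}
\lim_{n \to \infty,\, n = \rho L^d} \frac{1}{n} \bbE_{L,n}\Bigl[ \sum_{j \leq K} j R_j \Bigr] = \frac{1}{\rho} \sum_{j \leq K} \theta_j z^*(\rho)^j \int_{\bbR^d} \e{-jV(x)} \dd x .
\end{equation}
Letting $K \to \infty$ and using $\sum_{j\geq 1} \theta_j z^*(\rho)^j \int \e{-jV} = \rho(z^*(\rho)) = \min(\rho,\rho_{\rm c})$ gives $1 - \nu_\infty = \min(1,\rho_{\rm c}/\rho)$, so $\nu_\infty = \max(0,1-\rho_{\rm c}/\rho)$ and the limits defining $\nu_\infty$ in~\eqref{def nu oo} exist. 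The inequality $\nu_\infty \leq \mu_\infty$ is immediate from the pointwise bound $\sum_{j>K} j R_j \leq \sum_{x:\eta_x > K} \eta_x$: any part of size exceeding $K$ is hosted by a site with $\eta_x > K$. For the reverse direction, work with the bounded quantity $L^{-d}\sum_x \bbE_{L,n}[\eta_x \wedge K]$. Equivalence of ensembles at fixed $K$ (in the supercritical case one must separately account for the $O(K/L^d)$ contribution of the condensate site, which vanishes) identifies its limit with the grand canonical analogue $L^{-d}\sum_x \bbE_L^{z^*(\rho)}[\eta_x \wedge K]$, which in turn converges by Assumption~\ref{ass 1} and monotone convergence in $K$ to $\rho(z^*(\rho)) = \min(\rho,\rho_{\rm c})$. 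Since $L^{-d}\sum_x \bbE_{L,n}[\eta_x] = \rho$ identically, subtraction yields $\mu_\infty \leq \max(0,1-\rho_{\rm c}/\rho)$.

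The main obstacle is the supercritical partition-function asymptotic. When the laws of the $H_x$'s are heavy-tailed at $z_{\rm c}$ (e.g.\ when $\sum_{j} j \theta_j z_{\rm c}^j = \infty$), a classical local CLT is unavailable and one must instead exploit the discrete subexponential tail equivalence of Theorem~\ref{thm EH}. Combined with a two-scale decomposition that separates the condensate site from the bulk (the latter behaves as a typical critical grand canonical sample), this should yield $\bbP_L^{z_{\rm c}}(N=n-j)/\bbP_L^{z_{\rm c}}(N=n) \to 1$ and complete the argument.
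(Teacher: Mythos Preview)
Your approach diverges from the paper's in a way that introduces a genuine gap in the supercritical regime. The paper does \emph{not} attempt to prove the ratio asymptotic $Z_{L,n-j}/Z_{L,n}\to z_0(\rho)^j$ directly. Instead it first establishes Theorem~\ref{thm infinite elements} (convergence in probability of $jR_j/n$ and $N_k/n$) via a soft large-deviations argument, and then Theorem~\ref{thm trans dens} follows in two lines by bounded convergence and the identity $\sum_j \theta_j z_0(\rho)^j\int\e{-jV}=\min(\rho,\rho_{\rm c})$. The key device you are missing is this: in the proof of Theorem~\ref{thm infinite elements}, when $\rho\geq\rho_{\rm c}$ the paper does \emph{not} work at $z=z_{\rm c}$; it chooses $z<z_{\rm c}$ close to $z_{\rm c}$. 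At such $z$ the grand-canonical variables have exponential concentration (Lemma~\ref{lem:deviations-bound}), while Lemma~\ref{lem:zln} guarantees that $\bbP_L^z(N=n)\geq\e{-aL^d}$ for any prescribed $a>0$ once $z$ is close enough to $z_{\rm c}$. Since the numerator deviation probability is $\leq\e{-bL^d}$ with $b>0$ independent of $a$, the conditioning costs nothing. This argument uses only Assumption~\ref{ass 1}.

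Your route, by contrast, asks for $\bbP_L^{z_{\rm c}}(N=n-j)/\bbP_L^{z_{\rm c}}(N=n)\to1$ with $n\sim\rho L^d$ and $L^d$ summands simultaneously diverging. This is a local large-deviation statement of the Nagaev/Doney/Armend\'ariz--Loulakis type (cf.\ Section~\ref{sec:square}), and those results require regularity hypotheses on the tails of the $h_m$'s (discrete subexponentiality, $h_{m+1}/h_m\to1$, etc.) that are \emph{not} implied by Assumption~\ref{ass 1}. Theorem~\ref{thm EH} concerns a single infinitely divisible law and does not by itself handle the triangular-array situation here. So under Assumption~\ref{ass 1} alone your supercritical step is not justified; the paper's detour through convergence in probability is precisely what avoids this difficulty. (Incidentally, once Theorem~\ref{thm infinite elements}(a) is known, your ratio asymptotic \emph{does} follow a posteriori by bounded convergence, since $jR_j/n\leq1$ --- but that reverses the logical order you propose.)
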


The proof of this theorem can be found immediately after Theorem \ref{thm infinite elements} below.

It is possible to provide more detailed results about ``typical sizes'' of components. Choose a customer at random, and consider the size of the element of the partition that he belongs to. Informally, we have
\be
\text{Prob(typical size of element is $j$)} = \bbE_{L,n}\Bigl( \frac{j R_{j}}n \Bigr).
\ee
Indeed, there are $j R_{j}$ customers in tables with $j$ people. Similarly, let $N_{k}$ denote the number of restaurants with $k$ people:
\be
N_k(\omega) = \#\{ x\in \bbZ^d:\  H_x(\omega) = k\}.
\ee
The probability that a random customer finds himself in a restaurant with $k$ people is
\be
\text{Prob(typical site occupation is $k$)} = \bbE_{L,n}\Bigl( \frac{N_{k}}n \Bigr).
\ee

The next theorem gives the asymptotic behavior of typical sizes. In order to state it, we introduce $z_{0}(\rho)$ as the unique solution of the equation $\rho(z) = \rho$ when $\rho < \rho_{\rm c}$; we set $z_{0}(\rho) = z_{\rm c}$ when $\rho \geq \rho_{\rm c}$.

\begin{theorem}[Typical sizes]
\label{thm infinite elements}
Under Assumption \ref{ass 1}, we have for all $j\geq1$ and $k\geq0$,
\begin{itemize}
\item[(a)] $\displaystyle \frac{j R_j}{n} \stackrel{\mathrm p}{\longrightarrow} \frac{1}{\rho} \theta_j z_0(\rho) ^j \int_{\bbR^d} \e{- j V(x)} \dd x$,
\item[(b)] $\displaystyle \frac{N_k}{n} \stackrel{\mathrm p}{\longrightarrow} \frac{1}{\rho} \int_{\bbR^d} \frac{k h_k z_0(\rho)^k \exp( - k V(x))}{\sum_{j\geq 0}  h_j z_0(\rho)^j \exp( -jV(x))}  \dd x$. \end{itemize}
Here, $\stackrel{\mathrm p}{\longrightarrow}$ denotes the convergence in probability as $n,L\to\infty$ with $\rho L^{d}=n$.
\end{theorem}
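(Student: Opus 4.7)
I work under the Poisson representation of Section~\ref{sec:poisson} with activity $z=z_0(\rho)$, so that $\bbP_{L,n}(\cdot) = \bbP_L^{z_0}(\cdot\mid N=n)$ and the $R_{xj}$ are independent Poisson under $\bbP_L^{z_0}$.

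\emph{Step 1 (exact formulas for the conditional expectations).} For part (a), a one-line manipulation of \eqref{def main prob} based on $r_{xj}\theta_j^{r_{xj}}/r_{xj}! = \theta_j \cdot \theta_j^{r_{xj}-1}/(r_{xj}-1)!$ gives
\[
\bbE_{L,n}[R_{xj}] \;=\; \frac{\theta_j}{j}\, e^{-jV(x/L)}\, \frac{Z_{L,n-j}}{Z_{L,n}},
\]
so $\bbE_{L,n}[jR_j/n] = (\theta_j/n)\bigl(\sum_{x\in\bbZ^d} e^{-jV(x/L)}\bigr)Z_{L,n-j}/Z_{L,n}$. For part~(b), Proposition~\ref{prop:marginals}(a) combined with the Poisson representation yields
\[
\bbP_{L,n}(\eta_x=k) \;=\; \bbP_L^{z_0}(H_x=k)\cdot\frac{\bbP_L^{z_0}(N-H_x=n-k)}{\bbP_L^{z_0}(N=n)},
\]
and combining \eqref{eq:hetap} with \eqref{eq:levy} identifies $\bbP_L^{z_0}(H_x=k) = h_k z_0^k e^{-kV(x/L)}\bigl/\sum_{j\geq 0}h_j z_0^j e^{-jV(x/L)}$.

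\emph{Step 2 (partition-function ratios as a local limit problem).} From \eqref{eq:zln-cond},
\[
\frac{Z_{L,n-j}}{Z_{L,n}} \;=\; z_0(\rho)^j\,\frac{\bbP_L^{z_0}(N=n-j)}{\bbP_L^{z_0}(N=n)}.
\]
The analytic core of the proof is that for each fixed $j$ this ratio of point masses tends to $1$ as $L,n\to\infty$ along $n=\rho L^d$, and similarly for the ratio appearing in Step~1 for~(b). In the \emph{subcritical} regime $\rho<\rho_{\rm c}$, I take $z_0<z_{\rm c}$: then $N=\sum_x H_x$ is a sum of independent $\bbN_0$-valued variables with $\bbE_L^{z_0}[N]=L^d\rho(z_0)=n$ and finite per-site variance $\sigma^2<\infty$, and a classical lattice Gaussian local central limit theorem gives $\bbP_L^{z_0}(N=n+O(1)) \sim (2\pi\sigma^2 L^d)^{-1/2}$, hence the ratio tends to~$1$. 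In the \emph{critical/supercritical} regime $\rho\geq\rho_{\rm c}$, I take $z_0=z_{\rm c}$; the summands $H_x$ are heavy-tailed, but the slowly varying tail behaviour of their distribution (the discrete subexponential picture underpinning Theorem~\ref{thm EH}) still gives $\bbP_L^{z_{\rm c}}(N=n-j)/\bbP_L^{z_{\rm c}}(N=n)\to 1$.

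\emph{Step 3 (passage to the limit and concentration).} Combining Steps~1--2 with the Riemann-sum statement of Assumption~\ref{ass 1} identifies the $L^1$-limits
\[
\bbE_{L,n}\Bigl[\frac{jR_j}{n}\Bigr]\longrightarrow\frac{\theta_j z_0(\rho)^j}{\rho}\int_{\bbR^d}e^{-jV(x)}\,dx,
\]
and its analogue for $\bbE_{L,n}[N_k/n]$. To upgrade to convergence in probability, I compute the companion identity
\[
\bbE_{L,n}[R_{xj}R_{yj}] \;=\; \Bigl(\frac{\theta_j}{j}\Bigr)^2 e^{-jV(x/L)-jV(y/L)}\,\frac{Z_{L,n-2j}}{Z_{L,n}} \qquad (x\neq y),
\]
with the matching diagonal term for $R_{xj}^2$; the same LLT input of Step~2 gives $Z_{L,n-2j}/Z_{L,n} = z_0(\rho)^{2j}(1+o(1))$ with error small enough that $\mathrm{Var}(jR_j/n)\to 0$, and Chebyshev's inequality yields (a). An analogous second-moment estimate for $N_k = \sum_x \mathbf{1}_{\{H_x=k\}}$, based on the approximate conditional independence $\mathrm{Cov}_{L,n}(\mathbf{1}_{\{H_x=k\}},\mathbf{1}_{\{H_y=k\}})=o(1)$ for $x\neq y$, yields (b).

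\emph{Main obstacle.} The hard part is Step~2 in the condensation regime $\rho\geq\rho_{\rm c}$: the ratio asymptotic $\bbP_L^{z_{\rm c}}(N=n-j)/\bbP_L^{z_{\rm c}}(N=n)\to 1$ is not Gaussian and must be extracted from the slowly varying heavy-tailed local behaviour of the law of $N$, precisely the discrete subexponential setup underlying Theorem~\ref{thm EH}. In the subcritical regime everything is straightforward and reduces to the classical lattice local central limit theorem together with the Riemann sums of Assumption~\ref{ass 1}.
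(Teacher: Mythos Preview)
Your approach differs substantially from the paper's, and while it is natural, there is a genuine gap in Step~2 for the regime $\rho\geq\rho_{\rm c}$.

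\textbf{The gap.} You want to work at $z=z_0(\rho)=z_{\rm c}$ and invoke a local limit statement $\bbP_L^{z_{\rm c}}(N=n-j)/\bbP_L^{z_{\rm c}}(N=n)\to 1$ based on ``discrete subexponential'' behaviour. But Assumption~\ref{ass 1} gives you none of this. First, the grand-canonical measure at $z_{\rm c}$ need not even be defined: if $\sum_j \theta_j z_{\rm c}^j/j=\infty$ (as in the Bose gas, $\theta_j\equiv 1$, $z_{\rm c}=1$), then $H_0=\infty$ almost surely at the trap minimum and $\bbP_L^{z_{\rm c}}(N=n)=0$. Second, even when the measure exists, the discrete subexponential property of Theorem~\ref{thm EH} requires tail hypotheses on $(\theta_j)$ such as $\theta_{j+1}/\theta_j\to 1$ and a domination condition, none of which are part of Assumption~\ref{ass 1}. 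So the ratio asymptotic you need is simply not available in this generality.

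\textbf{How the paper sidesteps this.} The paper never tries to control a \emph{ratio} of point masses. Instead it uses only the crude bound
\[
\bbP_{L,n}\bigl(|\tfrac{N_k}{L^d}-m_k|>\eps\bigr)\;\leq\;\frac{\bbP_L^{z}\bigl(|\tfrac{N_k}{L^d}-\bbE_L^z\tfrac{N_k}{L^d}|>\eps'\bigr)}{\bbP_L^{z}(N=n)},
\]
and chooses $z<z_{\rm c}$ throughout (so the grand-canonical measure is always well defined). The numerator is shown to be $\leq e^{-bL^d}$ by an elementary exponential Chebyshev bound (Lemma~\ref{lem:deviations-bound}). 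The denominator is handled not by a local CLT but by the free-energy asymptotics of Lemma~\ref{lem:zln}: from $z^n Z_{L,n}=\bbP_L^z(N=n)\exp(\cdots)$ one gets $\tfrac{1}{L^d}\log\bbP_L^z(N=n)\to -\bigl[\sum_j\tfrac{\theta_j}{j}(z^j-z_0^j)\int e^{-jV}-\rho\log\tfrac{z}{z_0}\bigr]$, which can be made $\geq -a$ for any $a>0$ by taking $z$ close enough to $z_0(\rho)$. Choosing $a<b$ finishes the proof. No local limit theorem, no second-moment computation, and no tail assumption on $(\theta_j)$ beyond Assumption~\ref{ass 1}.

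Your subcritical argument via a Gaussian local CLT is in principle workable (with the caveat that the $H_x$ are not identically distributed, so you need a triangular-array local CLT), and your second-moment route to concentration is fine there. But for $\rho\geq\rho_{\rm c}$ you would need to abandon the local-limit strategy and adopt something like the paper's ``$z$ slightly below $z_{\rm c}$ plus exponential concentration beats subexponential denominator'' trick.
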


The proof of this theorem can be found at the end of the section. We first use it to prove Theorem \ref{thm trans dens}.

\begin{proof}[Proof of Theorem \ref{thm trans dens}]
We have
\be
\begin{split}
\nu_{\infty} &= 1 - \lim_{K\to\infty} \lim_{L,n\to\infty} \sum_{j=1}^{K} \bbE_{L,n} \Bigl( \frac{j R_{j}}n \Bigr) \\
&= 1 - \frac1\rho \sum_{j\geq 1} \theta_j z_0(\rho)^j \int_{\bbR^d} \e{-j V(x)} \dd x
\end{split}
\ee
and the definitions of $\rho_{\rm c}$ and $z_0(\rho)$ imply that $\nu_\infty= \max (0, 1- \frac{\rho_{\rm c}}{\rho})$. The proof for $\mu_\infty$ is similar and is based on the alternative formula~\eqref{eq:rhoz-hk} for $\rho(z)$. 
\end{proof}

The rest of this section is devoted to the proof of Theorem \ref{thm infinite elements}. The method relies on the conditioning relation~\eqref{eq:pln-cond}; we first prove limit laws for the grand-canonical measures $\bbP_L^z$ and then show that the conditioning merely picks the right activity $z= z_0(\rho)$. Technicalities arise because the grand-canonical probability to have exactly $n$ particles goes to $0$, and because the power series at finite $L$ need not converge at $z= z_{\rm c}$. 

Lemma~\ref{lem:zln} below and the limit law for the $R_j$'s are closely related to large deviations results~\cite{BLP,Suto2,BCMP,GSS,jkm}; we provide a complete proof because our setting is more general.

\begin{lemma} 
\label{lem:zln}
\[
\limtwo{n\to\infty}{\rho L^{d}=n} \frac{1}{n} \log Z_{L,n} 
= \frac{1}{\rho} \sum_{j\geq 1}\frac{\theta_j}{j} z_0(\rho)^j \int_{\bbR^d} \e{-j V(x)} \dd x  -  \log z_0(\rho). 
\]		
\end{lemma}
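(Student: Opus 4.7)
My approach is to compare the canonical partition function to the grand-canonical one via identity~\eqref{eq:zln-cond}. Setting $\Xi_L(z) := \sum_{x\in\bbZ^d}\sum_{j\geq 1}\frac{\theta_j}{j}z^j \e{-jV(x/L)}$, taking logarithms in~\eqref{eq:zln-cond} and dividing by $n$ gives, for every $z\in(0,z_{\rm c}]$ with $\Xi_L(z) < \infty$,
\be \label{eq:plan-id}
	\frac{1}{n}\log Z_{L,n} = -\log z + \frac{1}{n}\Xi_L(z) + \frac{1}{n}\log\bbP_L^z(N=n).
\ee
Assumption~\ref{ass 1}, combined with a termwise monotone/dominated convergence argument (the inequality $\theta_j z^j/j \leq \theta_j z^j$ ensures that finiteness of $\rho_{\rm c}$ implies convergence of the pressure series at $z = z_{\rm c}$), yields
\[
	\lim_{L\to\infty}\frac{1}{n}\Xi_L(z) = \frac{1}{\rho}\sum_{j\geq 1}\frac{\theta_j}{j}z^j\int_{\bbR^d}\e{-jV(x)}\,\dd x.
\]

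Specializing~\eqref{eq:plan-id} to $z = z_0(\rho)$ and using the trivial bound $\bbP_L^{z_0}(N=n) \leq 1$ gives the upper bound in the lemma immediately. The matching lower bound therefore reduces to showing
\[
	\liminf_{L\to\infty}\frac{1}{n}\log\bbP_L^{z_0(\rho)}(N=n) \geq 0.
\]
The choice $z = z_0(\rho)$ is dictated by the saddle-point condition: whenever $\rho\leq\rho_{\rm c}$ one has $\bbE_L^{z_0(\rho)}[N] = n(1+o(1))$, so $n$ sits ``near'' the typical value of $N$.

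For the lower bound I would split into two regimes. In the \emph{subcritical} regime $\rho<\rho_{\rm c}$, we have $z_0 < z_{\rm c}$; the $H_x$ then have finite exponential moments and $\mathrm{Var}_L^{z_0}(N) = O(L^d)$, while their joint distribution is aperiodic as soon as the support of $(\theta_j)$ generates $\bbZ$. A classical local central limit theorem for sums of independent integer-valued variables then gives $\bbP_L^{z_0}(N=n) \geq c L^{-d/2}$, so $\frac{1}{n}\log\bbP_L^{z_0}(N=n) = O(\log L / L^d)\to 0$. In the \emph{critical/supercritical} regime $\rho\geq\rho_{\rm c}$, one has $z_0 = z_{\rm c}$ and the $H_x$ are heavy-tailed; the excess mass $m_L := \lfloor (\rho-\rho_{\rm c})L^d\rfloor$ is absorbed at a single site, yielding
\[
	\bbP_L^{z_{\rm c}}(N=n) \geq \sum_{x_0}\bbP_L^{z_{\rm c}}(H_{x_0} = m_L)\,\bbP_L^{z_{\rm c}}\Bigl(\textstyle\sum_{x\neq x_0}H_x = n - m_L\Bigr).
\]
The first factor decays only polynomially in $L$ by~\eqref{eq:hetap} together with the regularity of $(\theta_j)$ from Theorem~\ref{thm buv}; the second is of order $L^{-d/2}$ by a local CLT applied to the residual variables (whose aggregate mean is close to $n - m_L$).

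The principal obstacle is precisely this supercritical lower bound. One has to show that heavy-tailed large deviations are dominated by a single-site condensate \emph{and} simultaneously verify an inhomogeneous local CLT for the bulk of sites, with bounds uniform in the inhomogeneity produced by the trap $V(x/L)$. This is the step where the subexponential theory behind Theorem~\ref{thm EH} must be married to a local limit theorem; by comparison, the identity~\eqref{eq:plan-id} and the subcritical case are routine.
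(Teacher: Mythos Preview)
Your upper bound matches the paper's: both use~\eqref{eq:zln-cond} and drop the probability term. The divergence is in the lower bound, where the paper does \emph{not} go through $\bbP_L^{z_0(\rho)}(N=n)$ at all.

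Your supercritical lower bound has two genuine gaps. First, under Assumption~\ref{ass 1} alone the grand-canonical measure at $z_{\rm c}$ need not exist: since $V(0)=0$, the site $x=0$ contributes $\sum_j \theta_j z_{\rm c}^j/j$ to $\Xi_L(z_{\rm c})$, and this diverges e.g.\ for the ideal Bose gas ($\theta_j\equiv 1$, $z_{\rm c}=1$). Then~\eqref{eq:plan-id} is unavailable at $z=z_{\rm c}$ and the whole conditioning picture collapses. Second, even after excising the origin, your estimate $\bbP_L^{z_{\rm c}}(H_{x_0}=m_L)\gtrsim m_L^{-C}$ invokes Theorem~\ref{thm buv}, whose subexponential hypotheses on $(\theta_j)$ are \emph{not} part of Assumption~\ref{ass 1}. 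Lemma~\ref{lem:zln} is asserted under Assumption~\ref{ass 1} only, so an argument that needs extra tail regularity does not prove it.

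The paper bypasses both issues with a direct variational bound: from Proposition~\ref{prop:marginals}(b) one has, for any $(r_j)$ with $\sum_j jr_j=n$,
\[
Z_{L,n}\ \geq\ \prod_{j\geq 1}\frac{1}{r_j!}\Bigl(\frac{\theta_j}{j}\sum_{x}\e{-jV(x/L)}\Bigr)^{r_j},
\]
and one chooses $r_j^{(n)}/n\to a_j:=\rho^{-1}\theta_j z_0(\rho)^j\int\e{-jV}$. Taking logs produces the relative entropy $I(\bsa)$, and a Stirling/dominated-convergence computation shows that $-I(\bsa)$ equals the claimed limit. No grand-canonical measure at $z_{\rm c}$, no local CLT, and no tail hypotheses on $(\theta_j)$ are needed---only the Riemann-sum convergence in Assumption~\ref{ass 1}. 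Your local-CLT route is salvageable in the subcritical regime, but for the full statement you should switch to this entropy argument.
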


\begin{proof}
Using Eq.~\eqref{eq:zln-cond} but neglecting the probability term, we get
\be
\frac{1}{L^d} \log Z_{L,n} \leq \sum_{j\geq 1} \frac{\theta_j}{j} z^j \frac{1}{L^d} \sum_{x\in \bbZ^d} \e{- j V(x/L)} - \rho \log z. 
\ee
For any $z<z_{\rm c}$, we have by dominated convergence
\be
\limsuptwo{n\to \infty}{ \rho L^d = n} \frac{1}{L^d} \log Z_{L,n} \leq 	\sum_{j\geq 1} \frac{\theta_j}{j} z^j \int_{\bbR} \e{- j V(x)} \dd x - \rho \log z. 
\ee
The bound extends to $z=z_{\rm c}$ by continuity. The equation of Lemma \ref{lem:zln} holds at least as an upper bound.

For the lower bound, we start with Proposition~\ref{prop:marginals} (b). We have
\be 
Z_{L,n} \geq \prod_{j=1}^n \frac{1}{r_j!} \Bigl( \frac{\theta_j}{j} \frac{1}{L^d} \sum_{x\in \bbZ^d} \e{-j V(x/L)} \Bigr)^{r_j}
\ee
for any choice of non-negative integers $(r_{j})$ such that $\sum j r_{j} = n$. Let us introduce the following entropy function on sequences $\bsa = (a_{j})$ such that $a_{j} \geq 0$ and $\sum_{j} j a_{j} \leq 1$:
\be
I(\bsa) = -\sum_{j\geq1} a_{j} \log \Bigl( \frac{\e{} \theta_{j}}{j a_{j} \rho} \int_{\bbR^{d}} \e{-j V(x)} \dd x \Bigr). 
\ee
We have 
\be
\label{borne inferieure}
\frac{1}{n} \log Z_{L,n} \geq - I\bigl(\tfrac1n r_{1}, \tfrac1{n} r_{2}, \dots \bigr) + \sum_{j\geq1} \frac{r_{j}}n \log \frac{\frac1{L^{d}} \sum_{x \in \bbZ^{d}} \e{-j V(x/L)}}{\int_{\bbR^{d}} \e{-j V(x)} \dd x} - \frac{1}{n} \sum_{j=1}^{n} \log \frac{r_j!}{(r_j/{\rm e})^{r_j}}.
\ee
Let $a_j$ be equal to the right side of Theorem \ref{thm infinite elements} (a). We choose a sequence $\bsr^{(n)}$ such that $\sum j r_{j}^{(n)} = n$, $\frac1n r_{j}^{(n)} \leq 2a_{j}$, and $\frac1n r_{j}^{(n)} \to a_{j}$ as $n\to\infty$. By continuity of $V$ around $0$, we have
\be
\begin{split}
& {\rm const} \; j^{-d} \leq \int \e{-j V(x)} \dd x \leq \int \e{-V(x)} \dd x, \\
& {\rm const} \; j^{-d} \leq \frac1{L^{d}} \sum_{x} \e{-j V(x/L)} \leq \frac1{L^{d}} \sum_{x} \e{-V(x/L)} \leq {\rm const},
\end{split}
\ee
where the constants do not depend on $L,j$. The logarithm in the first sum in Eq.~\eqref{borne inferieure} is less than a constant times $j$, so the summand is less than a constant times $j a_{j}$. We can then use the dominated convergence theorem and the first sum in Eq.\ \eqref{borne inferieure} vanishes in the limit $n\to\infty$. The last sum also vanishes: Using $1 \leq \frac{r_{j}!}{(r_{j}/\e{})^{r_{j}}} \leq C r_{j}$, we have for any $K$
\be
\frac1n \sum_{j=1}^{K} \log \frac{r_{j}!}{(r_{j}/\e{})^{r_{j}}} \leq \frac1n \sum_{j=1}^{K} \log Cn,
\ee
which goes to 0 as $n\to\infty$. Further, using $\log s \leq s$,
\be
\frac1n \sum_{j=K+1}^{n} \log \frac{r_{j}!}{(r_{j}/\e{})^{r_{j}}} \leq \frac1n \sum_{j=K+1}^{n} \log C r_{j} \leq \frac CK \sum_{j=K+1}^{n} \frac{j r_{j}}n \leq \frac CK.
\ee
This is arbitrarily small by choosing $K$ large enough. Finally, recall that $I$ is continuous on the set of non-negative sequences $(b_{j})$ satisfying $\sum j b_{j} \leq 1$ \cite{ball-carr-penrose}. Then
\be
\liminf_{n\to\infty} \frac1n  \log Z_{L,n} \geq -I(\bsa),
\ee
and a calculation shows that $-I(\bsa)$ is equal to the right side of the expression in Lemma \ref{lem:zln}. 
\end{proof}

Recall the definition of $\bbE_{L}^{z}$ given in Section \ref{sec:poisson}.

\begin{lemma}
\label{lem z-convergence}
For any $z<z_{\rm c}$, we have
\begin{itemize}
\item[(a)] $\displaystyle \lim_{L\to\infty} \bbE_{L}^{z} \Bigl( \frac{jR_{j}}{L^{d}} \Bigr) = \theta_{j} z^{j} \int_{\bbR^{d}} \e{-j V(x)} \dd x$.
\item[(b)] $\displaystyle \lim_{L\to\infty} \bbE_{L}^{z} \Bigl( \frac{N_{k}}{L^{d}} \Bigr) = \int_{\bbR^{d}} \frac{h_{k} z^{k} \e{-kV(x)}}{\sum_{j\geq1} h_{j} z^{j} \e{-jV(x)}} \dd x$.
\end{itemize}
\end{lemma}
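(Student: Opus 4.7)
The plan is to use the Poisson representation of $\bbP_L^z$ from Section~\ref{sec:poisson}, write each expectation as a discrete Riemann sum, and pass to the limit via Assumption~\ref{ass 1}, possibly after a uniform approximation step.

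Part (a) is a direct computation. Under $\bbP_L^z$ the random variable $R_j$ is Poisson with parameter $(\theta_j/j)z^j \sum_{x\in\bbZ^d}\e{-jV(x/L)}$, so
\[
\bbE_L^z\Bigl(\frac{j R_j}{L^d}\Bigr) = \theta_j z^j \cdot \frac{1}{L^d}\sum_{x\in\bbZ^d}\e{-jV(x/L)},
\]
and the statement follows immediately from Assumption~\ref{ass 1}.

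For part (b), writing $N_k = \sum_{x\in\bbZ^d}\mathbf{1}\{H_x=k\}$ and combining~\eqref{eq:hetap} with the identity $\sum_{m\geq 0} h_m w^m = \exp(\sum_{j\geq 1}(\theta_j/j)w^j)$ applied at $w = z\e{-V(x/L)}$ yields
\[
\bbP_L^z(H_x=k) = F_k(V(x/L)), \qquad F_k(v):=\frac{h_k z^k \e{-kv}}{H(z\e{-v})},
\]
where $H(u):=\sum_{j\geq 0}h_j u^j$. Consequently $\bbE_L^z(N_k/L^d) = L^{-d}\sum_{x\in\bbZ^d} F_k(V(x/L))$. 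The obstacle is that $F_k$ is not a finite linear combination of the exponentials $\e{-jv}$ to which Assumption~\ref{ass 1} directly applies. To bypass this, I would write $F_k(v) = \e{-kv}\chi(\e{-v})$ with $\chi(w):=h_k z^k/H(zw)$. Since $H$ has radius of convergence $z_c$ and satisfies $H(u)\geq h_0 = 1$ on $[0,z_c)$, the function $\chi$ is analytic on an open disc of radius $z_c/z > 1$ in $w$. Its Taylor series at $w=0$ therefore converges uniformly on $[0,1]$, so for any $\eps > 0$ there is a polynomial $P(w) = \sum_{i=0}^M c_i w^i$ with $|\chi(w)-P(w)|\leq \eps$ on $[0,1]$. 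Substituting $w=\e{-v}$ gives the pointwise bound
\[
\Bigl|F_k(v) - \sum_{i=0}^M c_i \e{-(k+i)v}\Bigr| \leq \eps\, \e{-kv}, \qquad v\geq 0.
\]

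I would then apply Assumption~\ref{ass 1} to each of the finitely many terms $\e{-(k+i)V}$, which is legitimate for $k\geq 1$ since then $k+i\geq 1$ for all $i\geq 0$. Using Assumption~\ref{ass 1} once more for $\e{-kV}$ to control the error term uniformly in $L$ yields
\[
\limsup_{L\to\infty}\Bigl|\frac{1}{L^d}\sum_{x\in\bbZ^d}F_k(V(x/L)) - \int_{\bbR^d}F_k(V(y))\,\dd y\Bigr| \leq 2\eps\int_{\bbR^d}\e{-kV(y)}\,\dd y,
\]
and sending $\eps\to 0$ gives the desired convergence. The main difficulty is precisely handling the infinite sum in the denominator of $F_k$; the analyticity of $\chi$ near $[0,1]$, which rests on the hypothesis $z<z_c$, is the key tool. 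For $k=0$ the bound above breaks down (the integral $\int \e{-0\cdot V(y)}\dd y$ is infinite), and one needs an additional tail hypothesis on $V$ to ensure the right-hand side of (b) is finite and the Riemann sum argument goes through.
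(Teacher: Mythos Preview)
Your proof is correct and follows essentially the same route as the paper's. Both write $\bbP_L^z(H_x=k)$ as $h_k z^k \e{-kV(x/L)}\exp\bigl(-\sum_j \tfrac{\theta_j}{j}z^j \e{-jV(x/L)}\bigr)$, expand as a power series in $\e{-V(x/L)}$, and apply Assumption~\ref{ass 1} term by term; the paper keeps the full series and invokes dominated convergence, while you truncate and use an $\eps$-argument --- two standard executions of the same idea.

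One small point: your justification ``$H(u)\geq h_0=1$ on $[0,z_{\rm c})$'' only rules out zeros of $H$ on the real segment, not on the whole complex disc, so it does not by itself give analyticity of $\chi$ on $\{|w|<z_{\rm c}/z\}$ (which is what you need for the Taylor series to have radius of convergence exceeding $1$). The fix is immediate: by \eqref{eq:levy}, $H(u)=\exp\bigl(\sum_{j\geq1}\tfrac{\theta_j}{j}u^j\bigr)$ is an exponential and hence never vanishes on its disc of convergence. Your remark about $k=0$ is apt; the paper's argument also tacitly needs $k\geq 1$ for Assumption~\ref{ass 1} to apply to every term of the expansion.
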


\begin{proof}
We have
\be
\bbE_{L}^{z} \Bigl( \frac{jR_{j}}{L^{d}} \Bigr) = \theta_{j} z^{j} \frac1{L^{d}} \sum_{x\in\bbZ^{d}} \e{-jV(x)},
\ee
and the limit (a) exists by Assumption \ref{ass 1}. For the limit (b), we have
\be
\begin{split}
\bbE_{L}^{z} \Bigl( \frac{N_{k}}{L^{d}} \Bigr) &= \frac1{L^{d}} \sum_{x \in \bbZ^{d}} \frac{h_{k} z^{k} \e{-kV(x/L)}}{\sum_{j} h_{j} z^{j} \e{-jV(x/L)}} \\
&= \frac1{L^{d}} \sum_{x \in \bbZ^{d}} h_{k} z^{k} \e{-kV(x/L)} \exp\Bigl\{ -\sum_{j} \frac{\theta_{j}}j z^{j} \e{-jV(x/L)} \Bigr\}.
\end{split}
\ee
We can expand the last exponential so as to get a convergent series of terms of the form $\e{-nV(x/L)}$ with $n\in\bbN$. The Riemann sums converge in the limit $L\to\infty$ by Assumption 1, and the result follows from dominated convergence.
\end{proof}

\begin{lemma}
\label{lem:deviations-bound}
Let $z<z_{\rm c}$ and $a>0$. There exist $L_{0}$ and $b>0$ such that for all $L\geq L_{0}$, we have
\begin{itemize}
\item[(a)] $\displaystyle \bbP_L^z\Bigl( \tfrac{1}{L^d} \bigl| N_k - \bbE_L^z N_k\bigr|  > a \Bigr) \leq \e{-b L^d}$,
\item[(b)] $\displaystyle \bbP_L^z\Bigl( \tfrac{1}{L^d} \bigl| R_j - \bbE_L^z R_j \bigr|  > a \Bigr) \leq \e{-b L^d}$.
\end{itemize}
\end{lemma}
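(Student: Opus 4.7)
The plan is to exploit the product structure of the grand-canonical measure $\bbP_L^z$ and apply a Chernoff-type bound uniformly to both variables of interest. Both $R_j$ and $N_k$ are sums of independent random variables indexed by $x \in \bbZ^d$: $R_j = \sum_x R_{xj}$ is a sum of independent Poissons (hence itself Poisson with parameter $\lambda_{L,j} = \frac{\theta_j}{j} z^j \sum_x \e{-jV(x/L)}$), while $N_k = \sum_x \mathbf{1}\{H_x = k\}$ is a sum of independent Bernoullis (the $H_x$ depend on disjoint Poisson blocks) with parameters $p_x = \bbP_L^z(H_x = k)$. By Assumption \ref{ass 1} and the computation already carried out in Lemma \ref{lem z-convergence}, one has $\lambda_{L,j}/L^d \to \theta_j z^j \int \e{-jV(x)}\dd x$ and $\sum_x p_x / L^d = \bbE_L^z N_k /L^d$ converging to the limit from Lemma \ref{lem z-convergence}(b); in particular both sequences are bounded above by a constant $C$ for all large $L$.

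Next, I would invoke a single exponential tail bound that covers both cases. If $X$ is either Poisson with mean $\lambda$, or a sum of independent Bernoullis whose parameters sum to $\lambda$, then the elementary inequality $1-p+p\e{t} \leq \exp\bigl(p(\e{t}-1)\bigr)$ yields $\bbE\bigl[\e{t(X-\lambda)}\bigr] \leq \exp\bigl(\lambda(\e{t}-1-t)\bigr)$ for every $t\in\bbR$. Optimizing in $t$ and combining the two one-sided bounds gives
\[
	\bbP\bigl(|X-\lambda| > y\bigr) \;\leq\; 2\exp\bigl(-\lambda\, h(y/\lambda)\bigr), \qquad h(u) := (1+u)\log(1+u) - u,
\]
where on the left tail $h$ is replaced by the analogous strictly positive function. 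For $y = aL^d$ and $\lambda = \bbE_L^z X \leq C L^d$, one has $y/\lambda \geq a/C$, so $\lambda\, h(y/\lambda) \geq b L^d$ for some $b = b(a,C) > 0$ and all sufficiently large $L$. This delivers both (a) and (b) with the same argument.

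The only point needing mild care is that in (a) the sum defining $N_k$ ranges over the infinite set $\bbZ^d$. However, since $\sum_x p_x \leq h_k z^k \sum_x \e{-kV(x/L)} < \infty$ by Assumption \ref{ass 1}, the moment generating function $\bbE_L^z[\e{tN_k}] = \prod_{x\in\bbZ^d}(1-p_x+p_x \e{t})$ is a convergent infinite product and still satisfies $\bbE_L^z[\e{tN_k}] \leq \exp\bigl(\sum_x p_x(\e{t}-1)\bigr)$ by monotone convergence over finite subsets of $\bbZ^d$, so the Chernoff argument goes through unchanged. The ``main obstacle'' is essentially nonexistent: this is a routine concentration estimate, and the only thing to track is that the effective mean scales like $L^d$ so that a macroscopic deviation has the stated exponential cost.
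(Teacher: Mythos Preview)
Your proposal is correct and follows essentially the same approach as the paper: both exploit the product structure of $\bbP_L^z$, bound the centered cumulant generating function by $\lambda(\e{t}-1-t)$ (via $1-p+p\e{t}\leq\exp(p(\e{t}-1))$ for the Bernoulli case, which is exactly what the paper writes as $\log(1+s)\leq s$), use Lemma~\ref{lem z-convergence} to ensure $\lambda/L^d$ stays bounded, and then apply Markov's inequality.

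The only cosmetic difference is in the final optimization: the paper bounds $\e{t}-1-t\leq c(z)t^2$ for $|t|\leq 1$ and then picks a suitable $t$, whereas you carry out the full Bennett optimization and invoke $h(u)=(1+u)\log(1+u)-u$. Your route requires one extra remark you left implicit, namely that $\lambda\mapsto\lambda\,h(y/\lambda)$ is decreasing in $\lambda$ for fixed $y$ (so $\lambda\leq CL^d$ indeed yields $\lambda\,h(aL^d/\lambda)\geq CL^d\,h(a/C)$); with this the argument is complete, and the left-tail case is handled either by the analogous function $(1-u)\log(1-u)+u$ when $y<\lambda$ or trivially when $y\geq\lambda$. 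The paper's quadratic bound sidesteps this monotonicity check but is otherwise the same computation.
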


\begin{proof}
We bound the cumulant generating functions and then deduce bounds with the help of Markov's inequality. Since random variables at different locations are independent, we have
\be
\begin{split}
\frac{1}{L^d}  \log \bbE_L^{z} \bigl[  \e{t ( N_k - \bbE_L^z N_k)} \bigr] 
	 &= \frac{1}{L^d}  \sum_{x\in \bbZ^d} \log  \bbE_L^{z} \Bigl[ \exp\Bigl( t \mathbf{1}_{\{H_x(\omega) =k \}} - t \; \bbP_L^z(H_x=k) \Bigr) \Bigr] \\
	&= \frac{1}{L^d} \sum_{x\in \bbZ^d} \Bigl\{ -t \bbP_L^z(H_x=k) + \log \Bigl(1 +  (\e{t}-1) \; \bbP_L^z(H_x=k) \Bigr) \Bigr\}.
\end{split}
\ee
Using $\log(1+s) \leq s$, we find
\be
\begin{split}
\frac{1}{L^d}  \log \bbE_L^{z} \bigl[  \e{t ( N_k - \bbE_L^z N_k)} \bigr] &\leq \bigl( \e{t} - 1 - t \bigr) \frac{1}{L^d} \sum_{x\in \bbZ^d} \bbP_L^z(H_x=k) \\
&= \bigl( \e{t} - 1 - t \bigr) \; \bbE_L^z \Bigl( \frac{N_{k}}{L^{d}} \Bigr).
\end{split}
\ee
The latter expectation converges by Lemma \ref{lem z-convergence} (b) so that everything is bounded by $c(z) t^{2}$ for $t\leq1$ with $c(z)$ depending on $z$ only. Then
\be
\begin{split}
\bbP_{L}^{z} \Bigl( \tfrac1{L^{d}} (N_{k} - \bbE_{L}^{z} N_{k} ) > a \Bigr) &= \bbP_{L}^{z} \Bigl( \e{t (N_{k} - \bbE_{L}^{z} N_{k})} > \e{ta L^{d}} \Bigr) \\
&\leq \e{-ta L^{d}} \bbE_{L}^{z} \Bigl( \e{t (N_{k} - \bbE_{L}^{z} N_{k})} \Bigr) \\
&\leq \e{-L^{d} (ta - c(z) t^{2})}.
\end{split}
\ee
This is smaller than $\frac12 \e{-b L^{d}}$ for some $b>0$. A similar bound can be found for $\bbP_{L}^{z}(\tfrac1{L^{d}} (N_{k} - \bbE_{L}^{z} N_{k} ) < -a)$ and this completes the proof of (a). The same method can be used to prove the claim (b) for $R_{j}$.
 \end{proof}

\begin{proof}[Proof of Theorem \ref{thm infinite elements}]
We only prove the claim (b), as the proof for (a) is the same. Let $m_{k}$ be the limit in (b). We have 
\be
\begin{aligned} 
	\bbP_{L,n}\Bigl( \bigl|\tfrac{N_k}{L^d} - m_k \bigr| > \eps\Bigr) 
	&\leq  \bbP_{L,n}\Bigl( \bigl|\tfrac{N_k}{L^d} - \bbE_{L}^{z}(\tfrac{N_{k}}{L^{d}}) \bigr| > \eps - \bigl| m_k - \bbE_{L}^{z}(\tfrac{N_{k}}{L^{d}}) \bigr| \Bigr) \\
	&= \frac{\bbP_{L}^z\bigl( \bigl|\frac{N_k}{L^d} -\bbE_{L}^{z}(\tfrac{N_{k}}{L^{d}}) \bigr| > \eps - \bigl| m_k - \bbE_{L}^{z}(\tfrac{N_{k}}{L^{d}}) \bigr| \bigr)} {\bbP_L^z\bigl(\sum_{j\geq 1} j R_{j} = n\bigr)}.
\end{aligned}
\ee
Then
\be
\bbP_{L}^{z} \Bigl( \sum_{j\geq1} j R_{j} = n \Bigr) = z^{n} Z_{L,n} \exp\Bigl\{-\sum_{x\in\bbZ^{d}} \sum_{j\geq1} \frac{\theta_{j}}j z^{j} \e{-jV(x/L)} \Bigr\}.
\ee
We choose $z$ close to $z_{0}(\rho)$ in such a way that
\be
\label{eq:z-choice}
\begin{split}
&\Bigl| m_{k} - \int_{\bbR^{d}} \frac{h_{k} z^{k} \e{-kV(x)}}{\sum_{j\geq1} h_{j} z^{j} \e{-jV(x)}} \dd x \Bigr| \leq \frac\eps2, \\
&\Bigl| \sum_{j\geq 1} \frac{\theta_j}{j}\bigl( z^j - z_0(\rho)^j)\int_{\bbR^d} \e{-j V(x)}\dd x - 
\rho \log \frac{z}{z_0(\rho)} \Bigr| \leq \frac a2.
\end{split}
\ee
(If $\rho < \rho_{\rm c}$, we can choose $z = z_0(\rho)$; if $\rho\geq  \rho_{\rm c}$, we need to choose $z$ close enough to $z_{\rm c}$.)
From Lemma~\ref{lem:deviations-bound} there exist $b>0$ and $L_{0}$ such that for all $L>L_{0}$, we have
\be
\bbP_{L}^z\Bigl( \bigl|\tfrac{N_k}{L^d} -\bbE_{L}^{z}(\tfrac{N_{k}}{L^{d}}) \bigr| > \eps - \bigl| m_k - \bbE_{L}^{z}(\tfrac{N_{k}}{L^{d}}) \bigr| \Bigr) \leq \e{-bL^{d}}.
\ee
It follows from Lemma \ref{lem:zln} that there exists $L_{1}$ such that if $L>L_{1}$, we have
\be
\bbP_{L}^{z} \Bigl( \sum_{j\geq1} j R_{j} = n \Bigr) \geq \e{-a L^{d}}.
\ee
Choosing $a<b$, we get the claim.
\end{proof}

\section{Trap potentials}\label{sec:traps} 

In this section we investigate the shape and location of the condensate for  a class of potential functions that have a unique minimum at the origin; square traps are treated in Section~\ref{sec:square}. Theorem~\ref{thm H0} states that the condensate is located at the origin (the trap's minimum), and Theorems~\ref{thm clt}--\ref{thm:alg-neg-fluct} provide the fluctuations of the occupation of the origin. The fluctuations are governed by infinitely divisible laws that need not be normal or $\alpha$-stable, as expected for sums of independent random variables that are not identically distributed~\cite{gnedenko-kolmogorov}. Theorems~\ref{thm H0}, \ref{thm clt} and \ref{thm:ewens-fluct} generalize known results~\cite{BU2,BP,CD}, Theorems~\ref{thm:alg-fluct} and~\ref{thm:alg-neg-fluct} are new. A graphical overview is given in Figure~\ref{fig:fluct}. Section~\ref{sec:phase-diagram} discusses the distribution of the partition elements at the origin: the condensate can concentrate on a single large element or be distributed according to some non-trivial law,  e.g.\ a Poisson-Dirichlet distribution.

\subsection{Macroscopic occupation of the origin}
In this subsection we consider the marginal measure on $\bseta$ and we establish that the excess mass is concentrated at $x=0$. We also prove a central limit theorem.

\begin{assumption}
\label{ass 2}
We assume that the potential function $V: \bbR^{d} \to [0,\infty]$ satisfies the following properties:
\begin{itemize}
\item[(i)] $V(x) = \|x\|^{\delta} (1+o(1))$ around the origin with $0<\delta<d$.
\item[(ii)] $V(x)>b>0$ for all $\|x\|>1$.
\item[(iii)] For every $a>0$, there exists a constant $C_{a}$ such that
\[
\frac1{L^{d}} \sum_{x \in \bbZ^{d} \setminus \{0\}} \e{-a V(x/L)} < C_a \int_{\bbR^{d}} \e{-aV(x)} \dd x < \infty;
\]
we also assume that the left side converges to $\int \e{-aV}$ as $L\to\infty$.
\end{itemize}
For the weights, we assume that $\theta_j>0$ for all $j$, that $\lim_{j\to \infty} \frac{1}{j}\log \theta_j = 0$, and that
\be \label{eq:traptheta-cond1}
	\sum_{j\geq 1} \frac{\theta_j}{j^{d/\delta}} <\infty.  
\ee
\end{assumption}

Notice that if $V(x) \approx c\|x\|^{\delta}$ with $c>0$ around the origin, there is no loss in generality in taking $c=1$. Indeed, let $L' = c^{-1/\delta} L$ and $V'(x) = V(c^{-1/\delta} x)$. Then $V'(x) \approx \|x\|^{\delta}$ around 0 and the probability can be written in terms of $L'$ and $V'$ by replacing $\e{-j V(x/L)}$ by $\e{-j V'(x/L')}$.

It follows from our assumptions that  $\int_{\bbR^d} \exp( -  j V(x)) \dd x$ is bounded from above and below by a constant times $j^{-d/\delta}$, whence  $z_{\rm c}=1$ and $\rho_{\rm c}<\infty$. Additional regularity conditions on the $\theta_j$s, formulated as conditions on the tails of the $h_n$s, will be imposed in the theorems. They are loosely related to regularity conditions for  heavy-tailed random variables~\cite{embrechts-klueppelberg-mikosch} and can be checked, in part, with the help of Theorem~\ref{thm buv}. 

The assumption $\delta<d$ will be important in our proofs. We suspect that a condition on $\delta$ is necessary, as for large $\delta$ the potential may be too shallow in order to confine the condensate to a single site; in fact the limit $\delta\to \infty$ corresponds, formally, to the square traps from Section~\ref{sec:square}, where all sites are alike and the condensate chooses uniformly among them. It is not clear how large $\delta$ should be in order for this to occur.

Recall that $H_{0}$ is the random variable for the total occupation of the origin.

\begin{theorem}
\label{thm H0}
Suppose that Assumption \ref{ass 2} holds true, and that there exist constants $C, c \geq 0$ and $a \leq \frac12 (1-\frac\delta d)$ such that for all $n\geq1$,
\[
C^{-1} \e{-c  n^a} \leq h_n \leq  C \e{c n^a}.
\]
Assume also that $\rho>\rho_{\rm c}$. Then as $n,L\to\infty$ with fixed $\rho = n/L^{d}$, we  have the convergence in distribution
\[
	\frac1{L^{d}} H_{0} \todi \rho - \rho_{\rm c}.
\]
\end{theorem}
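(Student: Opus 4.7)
The plan is to pass, via Proposition~\ref{prop:marginals}(a), to the marginal law of the site occupations $\bseta=(\eta_x)$, which under $\bbP_{L,n}$ has the product form $\prod_{x}h_{\eta_x}\e{-\eta_x V(x/L)}$ subject to $\sum_x\eta_x=n$. Singling out the origin yields
\[
\bbP_{L,n}(H_0=k)=\frac{h_k\,\tilde Z_{L,n-k}}{Z_{L,n}},\qquad\tilde Z_{L,m}:=\sum_{\substack{(\eta_x)_{x\neq 0}\\\sum_{x\neq 0}\eta_x=m}}\prod_{x\neq 0}h_{\eta_x}\e{-\eta_x V(x/L)}.
\]
Since $V(x/L)>0$ strictly for $x\neq 0$ by Assumption~\ref{ass 2}, the Poisson construction of Section~\ref{sec:poisson}, restricted to sites $x\neq 0$, is well defined at the critical activity $z=z_{\rm c}=1$: the needed convergence $\sum_{x\neq 0}\sum_{j\geq 1}(\theta_j/j)\e{-jV(x/L)}<\infty$ follows from Assumption~\ref{ass 2}(iii) and $\sum_j\theta_j/j^{d/\delta}<\infty$. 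Writing $\tilde{\bbP}_L^1$ for this restricted grand-canonical measure and $\tilde N:=\sum_{x\neq 0}\eta_x$, one has $\tilde Z_{L,m}=\tilde C_L\,\tilde{\bbP}_L^1(\tilde N=m)$ with $\tilde C_L:=\exp\bigl(\sum_{x\neq 0}\sum_{j\geq 1}(\theta_j/j)\e{-jV(x/L)}\bigr)$, so that
\[
\bbP_{L,n}(H_0=k)=\frac{h_k\,\tilde{\bbP}_L^1(\tilde N=n-k)}{\sum_{k'\geq 0}h_{k'}\,\tilde{\bbP}_L^1(\tilde N=n-k')}.
\]

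Next I would establish a law of large numbers under $\tilde{\bbP}_L^1$: $\tilde N/L^d\to\rho_{\rm c}$ in probability. The Poisson decomposition $\tilde N=\sum_j jR_j^{(\neq 0)}$ with independent $R_j^{(\neq 0)}\sim\mathrm{Poiss}\bigl((\theta_j/j)\sum_{x\neq 0}\e{-jV(x/L)}\bigr)$ gives, via Assumption~\ref{ass 2}(iii) and dominated convergence, the mean $\rho_{\rm c}L^d(1+o(1))$. For the variance $\mathrm{Var}(\tilde N)=\sum_j j\theta_j\sum_{x\neq 0}\e{-jV(x/L)}$, splitting the sum at $j\sim L^\delta$ and using that $\sum_{x\neq 0}\e{-jV(x/L)}$ is $O(L^d j^{-d/\delta})$ for $j\leq L^\delta$ and exponentially small for $j\gg L^\delta$ (as $V(x/L)\gtrsim L^{-\delta}$ at the nearest neighbours of the origin), one obtains the rough bound $\mathrm{Var}(\tilde N)=O(L^{d+\delta})$. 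Since $\delta<d$ the standard deviation is $o(L^d)$, and Chebyshev's inequality gives the LLN.

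Consequently $\tilde{\bbP}_L^1(\tilde N=\cdot)$ concentrates on $m=\rho_{\rm c}L^d(1+o(1))$, i.e.\ on $k=n-m=(\rho-\rho_{\rm c})L^d(1+o(1))$. To conclude I would check that reweighting by $h_k$ does not shift the posterior mode. For $k\sim L^d$, $\log h_k=O(k^a)$ has derivative $O(L^{d(a-1)})$, so it varies by $O(L^{da-(d-\delta)/2})$ over the bulk fluctuation window of width $L^{(d+\delta)/2}$. The hypothesis $a\leq\tfrac12(1-\delta/d)$ is exactly $da\leq(d-\delta)/2$, making this variation $O(1)$; the reweighting is then subdominant to the bulk concentration and $H_0/L^d\to\rho-\rho_{\rm c}$ in probability, hence in distribution.

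The main obstacle is the quantitative interplay between the bulk distribution of $\tilde N$ and the weight $h_k$. The LLN alone controls $\tilde N/L^d$ only up to $o(1)$ errors; in order to compare $h_k\tilde{\bbP}_L^1(\tilde N=n-k)$ across different $k$ near the concentration point one needs matching pointwise bounds on $\tilde{\bbP}_L^1(\tilde N=m)$, for instance a local central limit theorem or Gaussian-type upper and lower bounds at the scale $L^{(d+\delta)/2}$. Such bounds must contend with heavy-tailed contributions from sites $x\neq 0$ close to the origin, where $V(x/L)$ is small; it is precisely the balance between those tails and the subexponential bound $h_n\leq C\e{cn^a}$ that dictates the sharp exponent $a\leq\tfrac12(1-\delta/d)$ in the hypothesis.
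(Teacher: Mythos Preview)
Your decomposition is exactly the paper's: you single out the origin and work with the grand-canonical measure at $z=z_{\rm c}=1$ restricted to $x\neq 0$; your $\tilde N$ is the paper's $M=\sum_{x\neq 0}H_x$, and your formula $\bbP_{L,n}(H_0=k)\propto h_k\,\tilde{\bbP}_L^1(\tilde N=n-k)$ is the paper's Eq.~\eqref{eq:conditioned-critical}. Your variance estimate $\mathrm{Var}(\tilde N)=O(L^{d+\delta})$ is also the paper's Eq.~\eqref{var M}.

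The gap is in the concentration step. Chebyshev gives only polynomial tail decay, $\tilde{\bbP}_L^1\bigl(|\tilde N/L^d-\rho_{\rm c}|>\eps\bigr)=O(L^{\delta-d})$, and this cannot beat the reweighting by $h_k$: the ratio $\max_{k\leq n}h_k/\min_{k\leq n}h_k$ can be as large as $C^2\e{2cn^a}$, which for $a>0$ is stretched-exponentially large in $L$. So the contribution of $k$ far from $(\rho-\rho_{\rm c})L^d$ is not controlled, and the argument as written does not close. You recognize this in your last paragraph.

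However, the fix you propose (a local CLT or matching Gaussian pointwise bounds on $\tilde{\bbP}_L^1(\tilde N=m)$) is stronger than what is needed. The paper instead proves an \emph{exponential} concentration bound (Lemma~\ref{lem:m-laplace}),
\[
\tilde{\bbP}_L^1\Bigl(\bigl|\tfrac{1}{L^d}\tilde N-\rho_{\rm c}^L\bigr|\geq B\Bigr)\leq C\,\e{-c\,B\,L^{(d-\delta)/2}},
\]
obtained by the exponential Markov (Chernoff) inequality and a direct estimate of the cumulant generating function of $\tilde N$. This is precisely what the hypothesis $a\leq\tfrac12(1-\delta/d)$, i.e.\ $da\leq(d-\delta)/2$, is calibrated against: the decay $\e{-c\eps L^{(d-\delta)/2}}$ beats the $h$-ratio $\e{2cn^a}$. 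The paper also works with the Laplace transform $\bbE_{L,n}\bigl[\e{t(n-H_0)/L^d}\bigr]=\bbE_L^{z_{\rm c}}[h_{n-M}\e{tM/L^d}]/\bbE_L^{z_{\rm c}}[h_{n-M}]$ (Eq.~\eqref{eq:conditioned-fourier}) rather than with the posterior pmf, which removes any need for pointwise or local-CLT information: one only has to show that the numerator restricted to $\{|M/L^d-\rho_{\rm c}^L|>\eps\}$ is negligible, and that follows immediately from the exponential bound and the crude $h$-ratio estimate.
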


The theorem applies to algebraically decaying  weights $\theta_j = j ^{-\gamma}$, $\gamma>0$, by Theorem~\ref{thm buv}.  It also applies to stretched exponential weights $\theta_j /j = \e{- j ^\gamma}$  and algebraically growing weights $\theta_j = j^\gamma$ with $\gamma>0$ small enough. In the latter case, we indeed have
\be \label{eq:hn-alg-growth}
	h_n  = \bigl(1+o(1)\bigr) c n^{- (\gamma+2)/[ 2(\gamma+1)]}  \exp\Bigl( C n^{\gamma/ (1+\gamma)} \Bigr) 
\ee 
for suitable constants $c,C$. 
Eq.~\eqref{eq:hn-alg-growth} was proven by Erlihson and Granovsky, see Eq. (4.66) in ~\cite{erlihson-granovsky}. It also follows from results proven independently in~\cite{ercolani-ueltschi}. (The proof in \cite{ercolani-ueltschi} was given for specific weights that satisfy $\theta_j = (1+o(1)) j^\gamma$, but it can be extended with the help of known results on polylogarithms~\cite[Chapter VI.8]{FS}.)

For the proof of Theorem~\ref{thm H0} we follow \cite{BP,BU2}, see also~\cite{CD}.  First we express the canonical expectations with respect to the grand-canonical measure at $z = z_{\rm c} =1$  with the help of the conditioning relation~\eqref{eq:pln-cond}. A difficulty here is that $\sum_j \theta_j /j$ can be infinite, in which case 
$\bbP_{L}^{z_{\rm c}}( H_0 = \infty) =1$. Therefore we give a special treatment to $x=0$. Let  
\be
	M(\omega) = \sum_{x\in \bbZ^d\backslash \{0\}} H_x(\omega) = \sum_{j\geq 1} \sum_{x\in \bbZ^d\backslash \{0\}} j R_{xj}(\omega)
\ee
be the number of particles not at $0$.\footnote{A careful inspection of the definitions shows that the measure $\mu_\Lambda$ used by Buffet and Pul{\'e}~\cite{BP} and Betz and Ueltschi~\cite{BU2} is, in our notation,  the law of $M$ under $\bbP_L^{z_{\rm c}}$. The variable $M$ is also used by Chatterjee and Diaconis~\cite{CD}.}
Let $\rho_{\rm c}^{L} = \frac{1}{L^d} \bbE_{L}^{z_c}(M)$; this is approximately equal to the critical density:
\be
\rho_{\rm c}^L = \frac{1}{L^d} \bbE_{L}^{z_c}(M) = \frac{1}{L^d} \sum_{j\geq 1} \sum_{x\in \bbZ^d\backslash \{0\}} \theta_j  \e{- j V(x/L)}, 
\ee
which converges to $\rho_{\rm c}$ by dominated convergence.

\begin{lemma} \label{lem:m-laplace}
Under the same assumptions as in Theorem \ref{thm H0}, there exist $C,c>0$ such that for all $L>0$ and all $B>0$, 
\[
\bbP_L^{z_{\rm c}}\Bigl( \bigl| \tfrac1{L^{d}} M -\rho_{\rm c}^{L} \bigr| \geq B \Bigr) \leq C \exp( - c \, B \, L^{(d-\delta)/2} \bigr). 
\]
	If in addition $\sum_j j \theta_j / j^{d/\delta} <\infty$, the same estimate holds with exponent $\min (d/2, d-\delta)$ instead of $(d-\delta)/2$. 
\end{lemma}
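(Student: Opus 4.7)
The plan is a Chernoff argument applied to the centered sum $M-\rho_{\rm c}^L L^d = \sum_{x\neq 0} (H_x-\bbE_L^{z_{\rm c}} H_x)$. Since the $H_x$ are independent and infinitely divisible with L\'evy measure $\nu_x(j)=(\theta_j/j)\e{-jV(x/L)}$ (Eq.~\eqref{eq:rjgc} at $z=z_{\rm c}=1$), the centered log-Laplace transform is explicit,
\[
\Psi(t):=\log \bbE_L^{z_{\rm c}}\bigl[\e{t(M-\rho_{\rm c}^L L^d)}\bigr] = \sumtwo{x\in\bbZ^d}{x\neq 0}\sum_{j\geq 1} \frac{\theta_j}{j}\e{-jV(x/L)}\bigl(\e{jt}-1-jt\bigr),
\]
and Markov's inequality applied to $\pm t$ gives, for $t>0$,
\[
\bbP_L^{z_{\rm c}}\bigl(|L^{-d}M-\rho_{\rm c}^L|\geq B\bigr) \leq \e{-tBL^d+\Psi(t)}+\e{-tBL^d+\Psi(-t)}.
\]
The task therefore reduces to estimating $\Psi(\pm t)$ for a well-chosen $t$.

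The left tail is the easier one: because $\e{s}-1-s\leq s^2/2$ for every $s\leq 0$,
\[
\Psi(-t)\leq \frac{t^2}{2}\sumtwo{x\in\bbZ^d}{x\neq 0}\sum_{j\geq 1} j\theta_j \e{-jV(x/L)} \leq C t^2 L^d \sum_{j\geq 1}\frac{\theta_j}{j^{d/\delta-1}},
\]
where the spatial sum is controlled via Assumption~\ref{ass 2}(iii). Under the stronger hypothesis $\sum_j j\theta_j/j^{d/\delta}<\infty$ the series is finite. Without it, I would truncate at $j\leq 1/t$ (where the partial sum is still a negative power of $L$ for the eventual choice of $t$) and absorb the tail using $\tfrac{1}{j}\log\theta_j\to 0$.

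The right tail is the main obstacle, because $\e{jt}-1-jt$ grows exponentially once $jt\gg 1$. I would split the $j$-sum at $j_0=\lfloor 1/t\rfloor$. For $j\leq j_0$ the bound $\e{jt}-1-jt\leq (jt)^2$ reduces to the same variance-type estimate as above. For $j>j_0$ I would use $\e{jt}-1-jt\leq \e{jt}$, giving an inner factor $\e{-j(V(x/L)-t)}$. The key observation is that $V(x/L)\geq C\|x/L\|^\delta \geq CL^{-\delta}$ for every $x\in\bbZ^d\setminus\{0\}$; since $\delta<d$ implies $L^{-\delta}\gg L^{-(d+\delta)/2}$, taking $t=cL^{-(d+\delta)/2}$ with $c$ small yields $V(x/L)\geq 2t$ for every $x\neq 0$ and all large $L$, so the geometric series in $j$ is summable with quotient $\e{-V(x/L)/2}$, and the subsequent spatial sum is controlled again by Assumption~\ref{ass 2}(iii) applied at $j_0/2$.

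With this choice one has $tBL^d=cBL^{(d-\delta)/2}$ while $\Psi(\pm t)=O(1)$, which proves the first estimate. For the refinement, the additional summability gives $\mathrm{Var}(M)\leq CL^d$, so the choice $t=cL^{-d/2}$ (valid when $\delta\leq d/2$) delivers the improved exponent $d/2$; when $\delta>d/2$ the original choice already yields the sharper $d-\delta<d/2$, giving the claimed $\min(d/2,d-\delta)$. The hardest step is the large-$j$ bookkeeping in the right tail: simultaneously controlling the L\'evy tail $\sum_j (\theta_j/j)\e{jt}$ and the spatial sum crucially uses both Assumption~\ref{ass 2} and the decay hypothesis on $\theta_j$; weaker growth than $\theta_j=\e{o(j)}$ would break the estimate on $\{j>j_0\}$.
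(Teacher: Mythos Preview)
Your Chernoff strategy and the identification of the pivotal bound $V(x/L)\geq b'L^{-\delta}$ for $x\neq 0$ are exactly what the paper uses. There is, however, a genuine gap in your variance-type estimate. You write
\[
\Psi(-t)\;\leq\; \frac{t^2}{2}\sum_{x\neq 0}\sum_{j\geq1} j\theta_j\,\e{-jV(x/L)} \;\leq\; Ct^2L^d\sum_{j\geq1}\frac{\theta_j}{j^{d/\delta-1}},
\]
and when the last series diverges you assert that the truncated sum over $j\leq1/t$ is ``still a negative power of $L$''. That fails under the sole hypothesis $\sum_j\theta_j/j^{d/\delta}<\infty$: the only general bound is $\sum_{j\leq1/t}j\cdot(\theta_j/j^{d/\delta})\leq(1/t)\sum_j\theta_j/j^{d/\delta}$, giving $t^2L^d\cdot(1/t)=tL^d\asymp L^{(d-\delta)/2}$, a \emph{positive} power, and for $\theta_j=j^\gamma$ with $\gamma$ near $d/\delta-1$ this is essentially sharp. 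Your ``absorb the tail using $\frac{1}{j}\log\theta_j\to0$'' also breaks down, since $V(x/L)$ can be as small as $b'L^{-\delta}$, far below any fixed $\epsilon>0$. A bound $\Psi(t)=O(tL^d)$ instead of $O(t^2L^{d+\delta})$ yields, after Markov, only $\bbP\leq\exp\bigl(-(cB-C)L^{(d-\delta)/2}\bigr)$, which is useless for small $B$; the lemma is stated for \emph{all} $B>0$. The same defect affects your small-$j$ right-tail term and, after summation, your large-$j$ term.

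The missing ingredient is to absorb the stray factor of $j$ with part of the spatial exponential rather than with the $j$-sum. The paper does not split at $j_0$; it uses $|\e{s}-1-s|\leq\frac{s^2}{2}\e{|s|}$ uniformly and writes
\[
\frac{\theta_j}{j}\,\e{-jV(x/L)}\cdot\frac{(jt)^2}{2}\,\e{j|t|}
\;=\;\frac{\theta_j}{2}\,\e{-\frac12 jV(x/L)}\,\Bigl[\,j\,t^2\,\e{-j(\frac12 V(x/L)-|t|)}\,\Bigr].
\]
Since $|t|\asymp L^{-(d+\delta)/2}\ll L^{-\delta}\leq\frac12 V(x/L)$, the bracket is at most $t^2\sup_{s\geq0}s\,\e{-sb'/(4L^\delta)}=t^2\cdot 4L^\delta/(b'{\rm e})$, uniformly in $j$ and $x$. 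Summing the remaining factor $\frac{\theta_j}{2}\e{-\frac12 jV(x/L)}$ over $x$ and then $j$ gives the genuinely quadratic bound $\Psi(t)\leq C't^2L^{d+\delta}\sum_j\theta_j/j^{d/\delta}=O(1)$, and Markov delivers the lemma for every $B>0$. The refinement is obtained the same way: under $\sum_j j\theta_j/j^{d/\delta}<\infty$ one keeps the bracket as $\const\cdot j\,t^2$ (valid for $|t|\leq cL^{-\delta}$), so $\Psi(t)\leq C't^2L^d$ and one may push $|t|$ up to order $L^{-\max(\delta,d/2)}$.
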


As we shall see later, the additional condition means that $M$ has normal fluctuations of order $L^{d/2}$; if the confining potential is quite shallow, $\delta >d/2$, our large deviations estimate kicks in at fluctuations of the order of $L^\delta$ only.

\begin{proof}
Recall that the cumulant generating function of a Poisson variable $N\sim \mathrm{Poiss}(\lambda)$ is $\log \bbE\e{t N} = \lambda(\e{t} - 1)$. Let $t= t_L \in \bbR$ with $t _L^2 = O(L^{d-\delta})$. 
We have 
\be
	\begin{aligned}
		\log \bbE_L^{z_{\rm c}}\bigl[\e{t_L (\frac{1}{L^d} M -\rho_{\rm c}^L)} \bigr]  
			& = \sum_{j\geq 1} \sum_{x\in \bbZ^d\backslash \{0\}}  
				\log \bbE_L^{z_{\rm c}}\bigl[\e{j t_{L} \frac1{L^{d}} ( R_{xj}(\omega) - \bbE_L^{z_{\rm c}} R_{xj})} \bigr]\\
			& = \sum_{j\geq 1} \sum_{x\in \bbZ^d\backslash \{0\}} 
				\frac{\theta_j}{j}\e{-j V(x/L)} \bigl( \e{j t_L /L^d} - 1 - j \frac{t_L}{L^d} \bigr).  					
	\end{aligned}
	\ee  
	Using $|\e{s} - 1 - s| \leq \frac12 s^{2} \e{|s|}$, the interior sum is bounded by 
	\be \label{eq:interior-sum}
		\frac{1}{2} \theta_j \frac{1}{L^d}  \sum_{x\in \bbZ^d\backslash \{0\}}  \e{- \frac{1}{2} j V(x/L)} \Bigl[  \frac{j t_L^2}{L^d} \e{- 
		j ( \frac{1}{2}  V(x/L) - \frac{1}{L^{d}} t_L)}\Bigr].  
	\ee
Because of Assumption \ref{ass 2} (i), there exists $b'>0$ such that $V(x/L) \geq  b' L^{-\delta}$ for all non-zero $x\in \bbZ^{d}$. 
Since $t_L \ll L^{d-\delta}$, the square bracket is bounded, for large $L$, by 
	\be
		  j \frac{t_L^2}{L^d} \exp\bigl(- \frac{b'}{ 4 L^\delta}j \bigr) \leq \frac{t_L^2}{L^d} \times \frac{4 L^\delta}{b' {\rm e}}.	
	\ee
Then there is a constant $C'>0$ such that
	\be
	\label{Pfff... encore une borne}
		\Bigl| \log \bbE_L^{z_{\rm c}}\bigl[\e{t_L (\frac{1}{L^d} M -\rho_{\rm c}^L)} \bigr] \Big| \leq C' \frac{t_L^2}{L^{d-\delta}}  \sum_{j\geq 1} \frac{\theta_j}{j^{d/\delta}} = O(1).
	\ee
Using Markov's inequality,
	\be
		\bbP_L^{z_{\rm c}} \Bigl( \tfrac1{L^{d}} M - \rho_{\rm c}^{L} \geq B \Bigr) 
			\leq \e{- t_L B} \bbE_L^{z_{\rm c}}\bigl[\e{t_L (\frac{1}{L^d} M - \rho_{\rm c}^L) } \bigr],
	\ee
and a similar bound for the probability that $\frac1{L^{d}} M - \rho_{\rm c}^{L} \leq - B$ (use a negative $t_L$).

If $\sum_j j \theta_j / j^{d/\delta}<\infty$, we choose $t_L = O(L^{d-\delta})$ to bound the square bracket in \eqref{eq:interior-sum} by a constant times $j$. The right side in \eqref{Pfff... encore une borne} is then replaced by ${\rm const} (t_{L}^{2}/L^{d}) \sum j\theta_{j} / j^{d/\delta}$ and it is bounded when $t_{L} = O(L^{d/2})$.
\end{proof}

\begin{proof}[Proof of Theorem~\ref{thm H0}]
We have  
\be
	\bbP_L^{z_{\rm c}}\Bigl( \sum_{j=1}^n j R_{0j} = \ell \Bigr)  = h_\ell\exp\Bigl( - \sum_{j=1}^n \frac{\theta_j}{j}\Bigr)
\ee
so that
\begin{align} 
	&\bbP_{L,n}(H_{0}=\ell) =  \frac{ \bbP_L^{z_{\rm c}}(\sum_{j=1}^n j R_{0j} = \ell) \bbP_L^{z_{\rm c}}( M = n-\ell)}{ \bbP_L^{z_{\rm c}}(\sum_{j=1}^n j R_{0j}+ M =n ) },\label{eq:conditioned-critical}  \\
	&\bbE_{L,n} \bigl[ \e{t (n- H_0)}\bigr] = \frac{\bbE_L^{z_{\rm c}}[ h_{n - M}  \e{t M} ]} {\bbE_L^{z_{\rm c}} [h_{n-M}] }. \label{eq:conditioned-fourier}
\end{align}
We use the convention that $h_{m}=0$ when $m<0$.

We show that $\bbE_{L,n} \bigl[ \exp(\frac t{L^{d}} (n- H_0)) \bigr] \to \e{t \rho_{\rm c}}$ for any $t$. Convergence in distribution follows from the pointwise convergence of the Laplace transform and we get the claim of the theorem. It is enough to show that for any $\eps>0$, we have
\be
\frac{\bbE_L^{z_{\rm c}}[ h_{n - M} \mathbf{1}_{|M/L^{d} - \rho_{\rm c}^{L}| > \eps}]} {\bbE_L^{z_{\rm c}} [h_{n-M} ] } \to 0.
\ee
We have
\be
\begin{split}
\frac{\bbE_L^{z_{\rm c}}[ h_{n - M} \mathbf{1}_{|M/L^{d} - \rho_{\rm c}^{L}| > \eps}]} {\bbE_L^{z_{\rm c}} [h_{n-M} ] } &\leq \frac{\max_{j \leq n} h_{j}}{\min_{j \leq n} h_{j}} \; \frac{\bbP_{L}^{z_{\rm c}}(|\frac1{L^{d}} M - \rho_{\rm c}^{L}| > \eps)}{\bbP_{L}^{z_{\rm c}}(M \leq n)} \\
&\leq \frac{\max_{j \leq n} h_{j}}{\min_{j \leq n} h_{j}} \, C \e{-c\eps L^{(d-\delta)/2}}.
\end{split}
\ee
The last inequality follows from Lemma~\ref{lem:m-laplace}. The last term vanishes indeed as $L\to\infty$.
\end{proof}

\subsection{Fluctuations of the condensate}

We now study the fluctuations of the condensate. The goal is to find the correct scaling $\alpha$ and the limiting random variable $X$ such that
\be
\frac{H_{0} - L^{d} (\rho - \rho_{\rm c}^{L})}{L^{\alpha}} \todi X.
\ee
For simplicity, we fix the potential $V(x) = c \|x\|^{\delta}$ in this subsection.
It turns out that the phase diagram of the fluctuations is very rich. It is pictured in Fig.\ \ref{fig:fluct}. We only provide partial results, see the regions in dark colors. The lightly colored region is left to future studies; it would certainly be interesting to know what happens there.

\begin{figure}[htb]
\begin{center}
\begin{picture}(0,0)%
\includegraphics{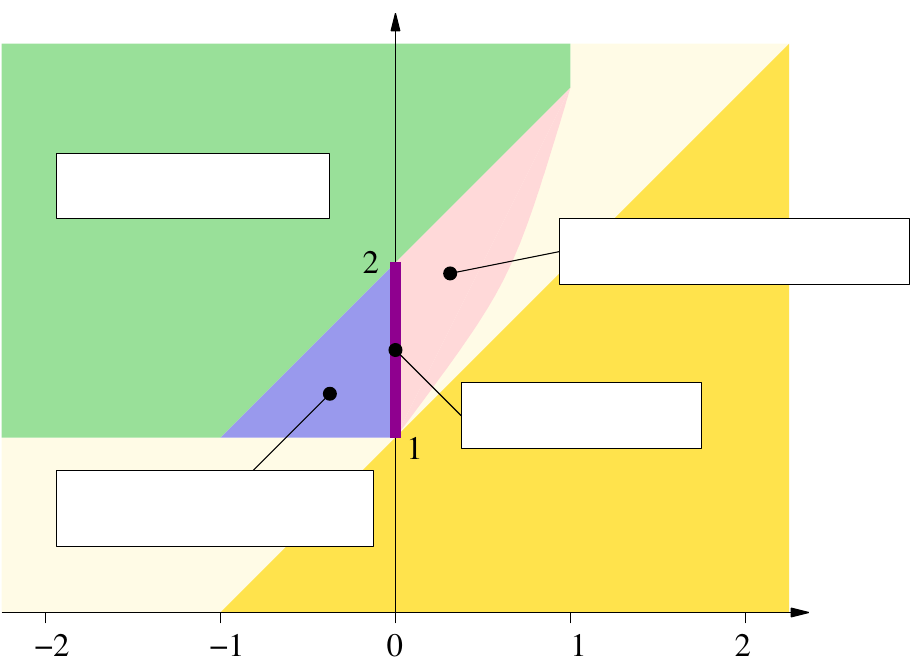}
\end{picture}%
\setlength{\unitlength}{2763sp}%
\begingroup\makeatletter\ifx\SetFigFont\undefined%
\gdef\SetFigFont#1#2#3#4#5{%
  \reset@font\fontsize{#1}{#2pt}%
  \fontfamily{#3}\fontseries{#4}\fontshape{#5}%
  \selectfont}%
\fi\endgroup%
\begin{picture}(6249,4494)(2689,-6661)
\put(5701,-5686){\makebox(0,0)[lb]{\smash{{\SetFigFont{11}{13.2}{\rmdefault}{\mddefault}{\updefault}{\color[rgb]{0,0,0}no condensation}%
}}}}
\put(8326,-6436){\makebox(0,0)[lb]{\smash{{\SetFigFont{11}{13.2}{\rmdefault}{\mddefault}{\updefault}{\color[rgb]{0,0,0}$\gamma$}%
}}}}
\put(5476,-2311){\makebox(0,0)[lb]{\smash{{\SetFigFont{11}{13.2}{\rmdefault}{\mddefault}{\updefault}{\color[rgb]{0,0,0}$\frac d\delta$}%
}}}}
\put(5926,-5086){\makebox(0,0)[lb]{\smash{{\SetFigFont{11}{13.2}{\rmdefault}{\mddefault}{\updefault}{\color[rgb]{0,0,0}$L^\delta$ (Thm \ref{thm:ewens-fluct})}%
}}}}
\put(3151,-5761){\makebox(0,0)[lb]{\smash{{\SetFigFont{11}{13.2}{\rmdefault}{\mddefault}{\updefault}{\color[rgb]{0,0,0}$L^{\frac\delta{1-\delta\gamma/d}}$ (Thm \ref{thm:alg-neg-fluct})}%
}}}}
\put(6601,-3961){\makebox(0,0)[lb]{\smash{{\SetFigFont{11}{13.2}{\rmdefault}{\mddefault}{\updefault}{\color[rgb]{0,0,0}$L^{\delta(1+\gamma/2)}$ (Thm \ref{thm:alg-fluct})}%
}}}}
\put(3151,-3511){\makebox(0,0)[lb]{\smash{{\SetFigFont{11}{13.2}{\rmdefault}{\mddefault}{\updefault}{\color[rgb]{0,0,0}$L^{d/2}$ (Thm \ref{thm clt})}%
}}}}
\end{picture}%
\end{center}
\caption{\label{fig:fluct}  
\small
Phase diagrams of the fluctuations of the condensate for the weights $\theta_j = j^\gamma$ and the potential $V(x) = c\|x\|^\delta$. Results for the dark regions are stated in Theorems~\ref{thm clt} to~\ref{thm:alg-neg-fluct}. For $\frac{d}{\delta}\leq \gamma+ 1$, there is no condensation. The light region remains to be investigated.} 
\end{figure} 

The first result deals with normal fluctuations. Let
\be
\label{def sigma}
\sigma_{\rm c}^2 = \sum_{j\geq 1} j\theta_j \int_{\bbR^d} \e{-j V(x)} \dd x.
\ee
The first result holds in the case where the variance above is finite.

\begin{theorem}[Central limit theorem for $H_{0}$]
\label{thm clt}
Assume that $V(x) = \|x\|^{\delta}$, $\rho>\rho_{\rm c}$ and $\sum_{j\geq 1} j \theta_j/j^{d/\delta}<\infty$. In addition, assume that there exist $C,c>0$, $a<\min(\frac12, 1-\frac\delta d)$, and $b>a+\max(\frac12, \frac\delta d)$ such that for all $n$,
\[
\begin{split}
&C^{-1} \e{- c n^a} \leq h_{n} \leq C \e{c n^a}, \\
&\lim_{n\to\infty} \max_{m\in [-n^{b}, n^{b}]} \frac{h_{n+m}}{h_{n}} = 1.
\end{split}
\]
Then, as $L,n \to \infty$ with $n = \rho L^{d}$,
\be
	\frac{H_{0} - L^{d} (\rho - \rho_{\rm c}^{L})}{\sigma_{\rm c} L^{d/2}} \todi \caN(0,1).
\ee
\end{theorem}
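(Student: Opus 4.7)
The plan, in the spirit of the proof of Theorem~\ref{thm H0}, is to pass to the grand-canonical measure $\bbP_L^{z_{\rm c}}$ at $z_{\rm c}=1$, establish a CLT for $M$ there, and transfer it to $\bbP_{L,n}$ via the conditioning relation together with the slowly-varying hypothesis on $(h_n)$. On the event $\{N=n\}$ one has $H_0 = n-M$; setting $m_L := \bbE_L^{z_{\rm c}}(M) = L^d \rho_{\rm c}^L$, the target variable equals $-(M-m_L)/(\sigma_{\rm c} L^{d/2})$. As in~\eqref{eq:conditioned-critical}--\eqref{eq:conditioned-fourier}, for bounded measurable $g$,
\[
	\bbE_{L,n}\bigl[g(M)\bigr] = \frac{\bbE_L^{z_{\rm c}}\bigl[g(M)\, h_{n-M}\bigr]}{\bbE_L^{z_{\rm c}}\bigl[h_{n-M}\bigr]}
	\qquad (h_k := 0 \text{ for } k<0),
\]
so it suffices to show that $\bbE_L^{z_{\rm c}}[e^{\ii t(M-m_L)/(\sigma_{\rm c} L^{d/2})}\, h_{n-M}]/\bbE_L^{z_{\rm c}}[h_{n-M}]$ converges to $e^{-t^2/2}$ for every $t\in\bbR$.

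For the grand-canonical CLT, under $\bbP_L^{z_{\rm c}}$ the variables $(H_x)_{x\neq 0}$ are independent and infinitely divisible, with total variance $\sum_{x\neq 0}\sum_j j\theta_j e^{-jV(x/L)} = (1+o(1))L^d\sigma_{\rm c}^2$ by Assumption~\ref{ass 2} combined with $\sum_j j\theta_j/j^{d/\delta}<\infty$. Writing the L\'evy--Khintchin characteristic function
\[
	\log \bbE_L^{z_{\rm c}}\bigl[e^{\ii t(M-m_L)/(\sigma_{\rm c} L^{d/2})}\bigr]
	= \sum_{x\neq 0}\sum_{j\geq 1} \frac{\theta_j}{j}e^{-jV(x/L)}\, \psi\Bigl(\frac{jt}{\sigma_{\rm c} L^{d/2}}\Bigr)
\]
with $\psi(u)=e^{\ii u}-1-\ii u$, and splitting the inner sum at $j\sim L^{d/2}/|t|$ using $|\psi(u)+u^2/2|\leq \min(|u|^3,u^2)$, both error contributions vanish by dominated convergence (using the summability $\sum_j j\theta_j/j^{d/\delta}<\infty$), leaving the quadratic limit $-t^2/2$. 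Hence $(M-m_L)/(\sigma_{\rm c} L^{d/2})\todi\caN(0,1)$ under $\bbP_L^{z_{\rm c}}$.

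For the transfer, fix $b' \in (a+\max(\tfrac12,\tfrac\delta d),\, b)$ (non-empty by hypothesis) and set $G_L := \{|M-m_L|\leq L^{db'}\}$. On $G_L$ one has $|n-M - (n-m_L)|\leq L^{db'}\leq \mathrm{const}\cdot n^b$ for $L$ large, so the slowly-varying hypothesis forces $\sup_{M\in G_L}|h_{n-M}/h_{n-m_L}-1|\to 0$: the tilt by $h_{n-M}$ degenerates to a constant factor $(1+o(1))h_{n-m_L}$ on $G_L$, which cancels between numerator and denominator, reducing the left-hand side to the grand-canonical characteristic function of the previous step. For $G_L^{\rm c}$, the improved form of Lemma~\ref{lem:m-laplace} (with exponent $\min(d/2,d-\delta)$, available thanks to $\sum_j j\theta_j/j^{d/\delta}<\infty$) gives
\[
	\bbP_L^{z_{\rm c}}\bigl(M\notin G_L\bigr)\leq C\exp\bigl(-c\, L^{db'-d+\min(d/2,d-\delta)}\bigr),
\]
with strictly positive exponent, while the crude subexponential bounds on $h_n$ yield $h_{n-M}/h_{n-m_L}\leq \exp(O(n^a))=\exp(O(L^{da}))$. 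The choice $b'>a+\max(\tfrac12,\tfrac\delta d)$ forces $db'-d+\min(d/2,d-\delta)>da$, so the $G_L^{\rm c}$-contribution to both numerator and denominator is negligible. L\'evy's continuity theorem now yields the stated convergence.

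The delicate part is the transfer step: the canonical measure is the tilt of $\bbP_L^{z_{\rm c}}$ by $h_{n-M}$, and the CLT can only be inherited if this tilt is asymptotically flat across the CLT window of width $L^{d/2}$ around $m_L$. The strict inequalities $a<\min(\tfrac12,1-\tfrac\delta d)$ and $b>a+\max(\tfrac12,\tfrac\delta d)$ are calibrated precisely to match the sharpened Gaussian large-deviation exponent of Lemma~\ref{lem:m-laplace} against, respectively, the subexponential size of $h_n$ and the width over which $h_n$ is locally constant.
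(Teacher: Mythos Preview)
Your proof is correct and follows essentially the same route as the paper: a grand-canonical CLT for $M$ via the L\'evy--Khintchin representation plus dominated convergence (using $\sum_j j\theta_j/j^{d/\delta}<\infty$), then a transfer to $\bbP_{L,n}$ by splitting according to $\{|M-m_L|\le L^{db'}\}$ and balancing the improved exponent $\min(d/2,d-\delta)$ of Lemma~\ref{lem:m-laplace} against the bounds on $h_n$. Your cutoff parameter $b'$ is exactly the paper's $\nu$ under the substitution $\nu=d(1-b')$, and the constraints $a+\max(\tfrac12,\tfrac\delta d)<b'<b$ translate into the paper's $d(1-b)<\nu<\min(\tfrac d2,d-\delta)-da$.
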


The conditions on the $h_{n}$s can be easily verified for many given $(\theta_{j})$ with the help of Theorem \ref{thm buv} and Eq.~\eqref{eq:hn-alg-growth}. In the case of stretched exponential weights $\theta_{j} = \e{-j^{\alpha}}$, where $h_{n} \approx \e{-n^{\alpha}}$, we have $h_{n+m}/h_{n} \approx \e{-\alpha m/n^{1-\alpha}}$ so we need $b<1-\alpha$. This is possible only if $\alpha < \frac12 \min(\frac12, 1-\frac\delta d)$. For larger $\alpha$, or for weights $\theta_{j} = j^{\gamma}$ with large $\gamma$, we expect a deterministic shift in the average occupation number of the origin, small compared to $n$ but  large compared to $\sqrt{n}$,  and then normal fluctuations around the shifted mean; compare with Theorem~\ref{thm:homogen} (c). 

\begin{proof} This is similar to Theorem \ref{thm H0}. Since $H_{0} = \rho L^{d} - M$, it is enough to show that
\be
\bbE_{L,n}\Bigl( \exp\Bigl\{ \ii u \frac{M - L^{d} \rho_{\rm c}^{L}}{\sigma_{\rm c} L^{d/2}} \Bigr\} \Bigr) \to \e{-u^{2}/2}.
\ee
As in Eq.\ \eqref{eq:conditioned-fourier}, and again using the convention $h_{m}=0$ when $m<0$, we have
\be
\label{it works!}
\begin{split}
\bbE_{L,n}&\Bigl( \exp\Bigl\{ \ii u \frac{M - L^{d} \rho_{\rm c}^{L}}{\sigma_{\rm c} L^{d/2}} \Bigr\} \Bigr) = \frac{\bbE_{L}^{z_{\rm c}} \Bigl( h_{n-M} \exp\Bigl\{ \ii u \frac{M - L^{d} \rho_{\rm c}^{L}}{\sigma_{\rm c} L^{d/2}} \Bigr\} \Bigr)}{\bbE_{L}^{z_{\rm c}}(h_{n-M})} \\
&= \bbE_{L}^{z_{\rm c}}\Bigl( \exp\Bigl\{ \ii u \frac{M - L^{d} \rho_{\rm c}^{L}}{\sigma_{\rm c} L^{d/2}} \Bigr\} \Bigr) + \bbE_{L}^{z_{\rm c}}\Bigl( \Bigl[ \frac{h_{n-M}}{\bbE_{L}^{z_{\rm c}}(h_{n-M})} - 1 \Bigr] \exp\Bigl\{ \ii u \frac{M - L^{d} \rho_{\rm c}^{L}}{\sigma_{\rm c} L^{d/2}} \Bigr\} \Bigr).
\end{split}
\ee
We have
\be
 	\bbE_L^{z_{\rm c}}\Bigl[ \exp\Bigl(\mathrm{i} u \frac{M- \rho_{\rm c}^L L^d}{\sigma_{\rm c}L^{d/2} }\Bigl)\Bigr] 
	= \exp \biggl\{ \sum_{j\geq1} \frac{\theta_{j}}j \sum_{x \in \bbZ^{d} \setminus \{0\}} \e{-j V(x/L)} \Bigl[ \e{\ii j \frac u{\sigma_{\rm c} L^{d/2}}} - 1 - \ii j \tfrac u{\sigma_{\rm c} L^{d/2}} \Bigr] \biggr\}. 
\ee
The absolute value of the square bracket is less than ${j^{2} u^{2}}/{\sigma_{\rm c}^{2} L^{d}}$. From our assumptions and dominated convergence, we get
\be
\lim_{L\to\infty} \bbE_{L,n}\Bigl( \exp\Bigl\{ \ii u \frac{M - L^{d} \rho_{\rm c}^{L}}{\sigma_{\rm c} L^{d/2}} \Bigr\} \Bigr) = \e{-u^{2}/2}.
\ee
It remains to verify that the second term in \eqref{it works!} vanishes in the limit $L\to\infty$.

Let $\alpha =\min(\frac d2, d-\delta)$ and $\nu < \alpha - ad$.
As in the proof of Theorem \ref{thm H0}, it follows from Lemma~\ref{lem:m-laplace} that
\be
\bbE_{L}^{z_{\rm c}}\Bigl( \Bigl[ \frac{h_{n-M}}{\bbE_{L}^{z_{\rm c}}(h_{n-M})} - 1 \Bigr] {\mathbf 1}_{|\frac1{L^{d}} M - \rho_{\rm c}^{L}| > L^{-\nu}} \Bigr) \leq {\rm const} \e{2cn^{a} - c L^{\alpha-\nu}} \to 0.
\ee
We also have
\be
\bigl( 1 + o(1) \bigr) \min_{m, |m - L^{d} \rho_{\rm c}^{L}| < L^{d-\nu}} h_{n-m} \leq \bbE_{L}^{z_{\rm c}}(h_{n-M}) \leq \bigl( 1 + o(1) \bigr) \max_{m, |m - L^{d} \rho_{\rm c}^{L}| < L^{d-\nu}} h_{n-m}.
\ee
We choose $\nu>d(1-b)$, which is possible by the assumptions of the theorem. Then $\bbE_{L}^{z_{\rm c}}(h_{n-M}) = h_{n-\rho_{\rm c}^{L} L^{d}} (1+o(1))$ and the second term in \eqref{it works!} vanishes in the limit $L\to\infty$.
\end{proof}

The central limit theorem applies to the ideal Bose gas in dimensions $d \geq 5$. It was noted by Buffet and Pul\'e that the fluctuations were non-normal in $d=3$, of order $L^{2}$ \cite{BP}. We propose here three theorems about non-normal fluctuations larger than $L^{d/2}$. They always apply to situations where the variance $\sigma_{\rm c}^{2}$ defined in \eqref{def sigma} is infinite. In order to guess the scale of the fluctuations, we observe that
\be
\label{var M}
\bbE_{L}^{z_{\rm c}} \Bigl( \bigl( M - \bbE_{L}^{z_{\rm c}} M \bigr)^{2} \Bigr) = \sum_{j\geq1} j \theta_{j} \sum_{x\in\bbZ^{d}\setminus\{0\}} \e{-j V(x/L)} = O(L^{d+\delta}).
\ee
This can be shown by controlling the sum over $j$ as in the proof of Lemma \ref{lem:m-laplace}.

The first theorem deals with the Ewens case where the weights $\theta_{j}$ are constant. The critical density is finite and the variance infinite when $1< \tfrac{d}{\delta} - \gamma \leq 2$. Let $(G_{x})_{x\in\bbZ^{d}}$ be i.i.d.\ Gamma variables with parameters $(\theta,1)$ (that is, the probability density function is $\Gamma(\theta)^{-1} t^{\theta-1} \e{-t}$ for $t\geq0$).

\begin{theorem} \label{thm:ewens-fluct}
Suppose that $V(x) = c\|x\|^\delta$ with $\frac d2 < \delta < d$, and that $\theta_{j}\equiv\theta$ is constant. Then if $\rho>\rho_{\rm c}$, we have
	\[
		\frac{H_0 - (\rho-\rho_{\rm c}^L) L^d}{L^{\delta}} 
			\todi \sum_{x\in \bbZ^d\setminus \{0\}} \frac{\theta- G_x}{\|x\|^{\delta}}. 
	\]
\end{theorem}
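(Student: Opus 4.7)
The plan is to follow the template of the proof of Theorem~\ref{thm clt}. Since $H_0 - (\rho - \rho_{\rm c}^L)L^d = \bbE_L^{z_{\rm c}}M - M$, it suffices to show that $(M - \bbE_L^{z_{\rm c}} M)/L^\delta$ converges in distribution under $\bbP_{L,n}$ to $\sum_{x\neq 0}(G_x - \theta)/\|x\|^\delta$ (I take $c=1$ without loss of generality). By the conditioning relation~\eqref{eq:conditioned-fourier},
\be
\bbE_{L,n}\bigl[\e{\ii u (M - \bbE_L^{z_{\rm c}} M)/L^\delta}\bigr] = \frac{\bbE_L^{z_{\rm c}}\bigl[h_{n-M}\,\e{\ii u (M - \bbE_L^{z_{\rm c}} M)/L^\delta}\bigr]}{\bbE_L^{z_{\rm c}}[h_{n-M}]}.
\ee

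The first step is to replace this ratio by the grand-canonical characteristic function $\bbE_L^{z_{\rm c}}[\e{\ii u(M - \bbE_L^{z_{\rm c}} M)/L^\delta}]$. In the Ewens case $h_n = \Gamma(n+\theta)/(\Gamma(\theta)\,\Gamma(n+1)) \sim n^{\theta-1}/\Gamma(\theta)$, so $h_{n-m}/h_{n-\bbE_L^{z_{\rm c}} M}$ is uniformly close to $1$ whenever $|m - \bbE_L^{z_{\rm c}} M| = o(n)$. Combined with the stretched-exponential concentration of Lemma~\ref{lem:m-laplace}, which confines $M$ to a window of width $L^\delta \log L = o(L^d)$ up to negligible tails, this justifies the replacement exactly as at the end of the proof of Theorem~\ref{thm clt}.

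The second step is to compute the limit of the grand-canonical characteristic function. Under $\bbP_L^{z_{\rm c}}$ the variables $(H_x)_{x\neq 0}$ are independent, and the Ewens identity $\sum_{j\geq 1}\theta z^j/j = -\theta \log(1-z)$ gives the closed form
\be
\bbE_L^{z_{\rm c}}\bigl[\e{\ii u H_x/L^\delta}\bigr] = \biggl( \frac{1-\e{-V(x/L)}}{1-\e{-V(x/L)+\ii u/L^\delta}} \biggr)^{\!\theta}.
\ee
For each fixed $x\neq 0$, using $V(x/L) \sim \|x\|^\delta/L^\delta$ as $L\to\infty$, the right-hand side tends to $(\|x\|^\delta/(\|x\|^\delta - \ii u))^\theta$, while $\bbE_L^{z_{\rm c}}H_x/L^\delta \to \theta/\|x\|^\delta$; hence the centered characteristic function of $H_x/L^\delta$ converges to that of $(G_x-\theta)/\|x\|^\delta$ with $G_x \sim \Gamma(\theta,1)$.

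The main obstacle is to pass to the limit in the resulting \emph{infinite product} over $x \neq 0$, uniformly in $L$. I would split $\bbZ^d\setminus\{0\} = \{0<\|x\|\leq R\}\cup\{\|x\|>R\}$: the ball contributes a finite product that converges termwise to the analogous product for the limiting variable, while for the tail I use a second-order Taylor expansion of the centered log-characteristic function of $H_x/L^\delta$, whose modulus is bounded by a constant times $u^2 \, \mathrm{Var}_L^{z_{\rm c}}(H_x)/L^{2\delta}$. A direct computation gives $\mathrm{Var}_L^{z_{\rm c}}(H_x) = \theta \e{-V(x/L)}/(1-\e{-V(x/L)})^2$, which is $O(L^{2\delta}/\|x\|^{2\delta})$ for $\|x\|\leq L$ and decays exponentially for $\|x\|>L$. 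Together with Assumption~\ref{ass 2}(iii), the tail $\sum_{\|x\|>R} \mathrm{Var}_L^{z_{\rm c}}(H_x)/L^{2\delta}$ is $O(R^{d-2\delta}) + O(L^{d-2\delta})$, which goes to $0$ as $R\to\infty$ uniformly in $L$ because $\delta > d/2$. The same summability ensures that $\sum_{x\neq 0}(G_x-\theta)/\|x\|^\delta$ is well-defined by Kolmogorov's three-series theorem, and a standard $\eps/3$ argument then closes the proof.
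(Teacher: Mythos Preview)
Your proposal is correct and matches the approach the paper indicates (the paper omits the detailed proof, stating only that it is based on Lemma~\ref{lem:ewens-fluct} and analogous to the proof of Theorem~\ref{thm:alg-fluct}): you compute the single-site grand-canonical limits exactly as in Lemma~\ref{lem:ewens-fluct}, control the infinite product via a summable variance bound exploiting $2\delta>d$ (the analogue of the dominated-convergence step in the proof of Theorem~\ref{thm:alg-fluct}), and pass from the grand-canonical to the canonical ensemble using the slow variation of $h_n\sim n^{\theta-1}/\Gamma(\theta)$ together with Lemma~\ref{lem:m-laplace}, precisely as the paper remarks. The only cosmetic difference is that you cut the lattice at a fixed radius $R$ rather than at $\eps L$, which is harmless here because the Ewens closed form for the characteristic function is valid for all $x$ without requiring $V(x/L)$ to be small.
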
 

When $\theta=1$, $G_x$ is exponential with mean $1$ and we recover the result from~\cite{BP,CD}. The limiting random variable of Theorem \ref{thm:ewens-fluct} requires some explanations (see \cite{CD} for a closely related situation). Each $G_{x}$ has mean $\theta$. Consider
\be
Y_{\Lambda} = \sum_{x\in\Lambda} \frac{\theta- G_x}{\|x\|^{\delta}}.
\ee
Since $2\delta>d$ and $\sum_{x\in \bbZ^d \backslash \{0\}} \|x\|^{-2\delta} <\infty$, a standard theorem applies that concerns random series \cite[Chapter 2.5]{durrett-proba-book}: For every increasing sequence $(\Lambda_n)_{n\in \bbN}$ of domains with $\cup_{n} \Lambda_n = \bbZ^d$, the limit $Y(\omega) = \lim_{n\to \infty} Y_{\Lambda_n}(\omega)$ exists almost surely.  The dominated convergence theorem further shows that the limiting random variable has characteristic function 
\be
\bbE(\e{\ii u Y}) = \exp\biggl\{ \theta \sum_{x\in\bbZ^d\backslash \{0\}} \Bigl[ \frac{\ii u}{\|x\|^{\delta}} - \log \Bigl( 1 + \frac{\ii u}{\|x\|^{\delta}} \Bigr) \Bigr] \biggr\}.
\ee
It follows in particular that the law of $Y$ is independent of the precise choice of the sequence of domains. The infinite sum in Theorem \ref{thm:ewens-fluct} is by definition a variable equal to $Y$ in law.

\begin{theorem}\label{thm:alg-fluct} 
	Suppose that $V(x) = \|x\|^\delta$, $\theta_j = j^\gamma$ with $\gamma>0$, and that $(1+\gamma) (1+\frac12 \gamma) < \frac{d}{\delta} < \gamma+2$. Then if $\rho>\rho_{\rm c}$, we have
	\[
		\frac{H_0 - (\rho-\rho_{\rm c}^L) L^d}{L^{\delta(1+\gamma/2)}} 
			\todi \Bigl( \Gamma(\gamma+2) \sum_{x\in \bbZ^d\backslash \{0\}} \frac{1}{\|x\|^{\delta(\gamma+2)}}\Bigr)^{1/2} \mathcal{N}(0,1).
	\]
\end{theorem}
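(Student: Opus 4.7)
The plan is to follow the template of the proof of Theorem~\ref{thm clt}: express the canonical Fourier transform through the grand-canonical measure at $z=z_{\rm c}=1$ via the conditioning identity~\eqref{eq:conditioned-fourier}, establish a grand-canonical central limit theorem by a direct cumulant computation, and then show that the weight $h_{n-M}/\bbE_{L}^{z_{\rm c}}(h_{n-M})$ does not perturb the limit. Since $H_{0}=n-M$ and $L^{d}\rho_{\rm c}^{L}=\bbE_{L}^{z_{\rm c}}(M)$, the goal reduces to
\[
\frac{\bbE_{L}^{z_{\rm c}}\bigl[h_{n-M}\,\e{\ii u (\bbE_{L}^{z_{\rm c}}(M) - M)/L^{\delta(1+\gamma/2)}}\bigr]}{\bbE_{L}^{z_{\rm c}}[h_{n-M}]} \longrightarrow \e{-\frac12 u^{2} \sigma^{2}},\qquad \sigma^{2} = \Gamma(\gamma+2)\sum_{x\neq 0} \|x\|^{-\delta(\gamma+2)}.
\]

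For the grand-canonical CLT, independence and the Poisson structure under $\bbP_{L}^{z_{\rm c}}$ yield
\[
\log \bbE_{L}^{z_{\rm c}}\bigl[\e{\ii u (M - \bbE_{L}^{z_{\rm c}}(M))/L^{\delta(1+\gamma/2)}}\bigr] = \sum_{j\geq 1} j^{\gamma-1} \sum_{x\neq 0} \e{-j\|x/L\|^{\delta}}\bigl(\e{\ii j u /L^{\delta(1+\gamma/2)}} - 1 - \ii j u /L^{\delta(1+\gamma/2)}\bigr),
\]
and the quadratic part of the Taylor expansion of $\e{\ii s}-1-\ii s$ contributes
\[
-\frac{u^{2}}{2 L^{\delta(\gamma+2)}} \sum_{j\geq 1} j^{\gamma+1} \sum_{x\neq 0} \e{-j\|x/L\|^{\delta}}.
\]
For each fixed $x\neq 0$ the inner sum is asymptotic to $\Gamma(\gamma+2)(L/\|x\|)^{\delta(\gamma+2)}$ as $L\to\infty$ (Euler--Maclaurin, or standard polylogarithm asymptotics $\sum_{j}j^{\gamma+1}\e{-ja}\sim \Gamma(\gamma+2)a^{-(\gamma+2)}$ as $a\downarrow 0$). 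The upper-bound assumption $d/\delta<\gamma+2$ ensures $\sum_{x\neq 0}\|x\|^{-\delta(\gamma+2)}<\infty$, and dominated convergence, with an envelope obtained from $\|x/L\|^{\delta}\geq L^{-\delta}$ and from the exponential decay for $\|x\|\geq L$, delivers the variance $\sigma^{2}$. The cubic remainder is bounded by $|u|^{3} L^{\delta(\gamma+3)-3\delta(1+\gamma/2)}=|u|^{3}L^{-\delta\gamma/2}\to 0$.

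The main obstacle is absorbing the weight $h_{n-M}/\bbE_{L}^{z_{\rm c}}(h_{n-M})$, since the scale $L^{\delta(1+\gamma/2)}$ of fluctuations exceeds the diffusive scale $L^{d/2}$ admitted by Theorem~\ref{thm clt}, so its direct hypotheses on $h_{n}$ fail. Here I would invoke the sharp Erlihson--Granovsky asymptotics~\eqref{eq:hn-alg-growth}, which give $\log h_{n}=Cn^{\gamma/(\gamma+1)}+O(\log n)$ for $\theta_{j}=j^{\gamma}$, together with a large-deviations bound for $M$ under $\bbP_{L}^{z_{\rm c}}$ analogous to Lemma~\ref{lem:m-laplace}. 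A first-order Taylor expansion of $n\mapsto n^{\gamma/(\gamma+1)}$ at $n\sim L^{d}$ gives
\[
\log \frac{h_{n-M}}{h_{n-\bbE_{L}^{z_{\rm c}}(M)}} = O\Bigl(\frac{|M - \bbE_{L}^{z_{\rm c}}(M)|}{L^{d/(\gamma+1)}}\Bigr),
\]
which tends to $0$ uniformly on the typical scale of fluctuations precisely when $L^{\delta(1+\gamma/2)} \ll L^{d/(\gamma+1)}$, i.e.\ when $d/\delta > (1+\gamma)(1+\gamma/2)$. This is exactly the lower-bound hypothesis, and it justifies replacing $h_{n-M}$ by the constant $h_{n-\bbE_{L}^{z_{\rm c}}(M)}$ on a large-deviation-typical event whose complement is controlled by the adapted version of Lemma~\ref{lem:m-laplace}. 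Once this replacement is in place, the ratio of expectations collapses to the grand-canonical characteristic function analyzed in the previous paragraph, and the argument concludes as in the end of the proof of Theorem~\ref{thm clt}.
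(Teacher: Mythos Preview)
Your proposal is correct and follows essentially the same strategy as the paper: a grand-canonical CLT for $M$ via the cumulant generating function, combined with the conditioning identity~\eqref{eq:conditioned-fourier} and the Erlihson--Granovsky asymptotics~\eqref{eq:hn-alg-growth} for $h_n$ to absorb the weight, the lower bound $(1+\gamma)(1+\tfrac12\gamma)<d/\delta$ being exactly what makes $h_{n-M}$ flat on the fluctuation scale $L^{\delta(1+\gamma/2)}$. The only organizational difference is that the paper first isolates single-site fluctuations (Lemma~\ref{lem:alg-fluct}) and handles the sum over $x$ by a spatial splitting $\|x\|\lessgtr \eps L$ with a variance argument for the far part, whereas you control the full double sum directly via a third-order Taylor remainder and the uniform polylogarithm bound $\sum_j j^{\gamma+1}e^{-ja}\leq C a^{-(\gamma+2)}$; both routes yield the same limit.
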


Next, let $b_{\gamma,\delta,d}$ be equal to
\be
b_{\gamma,\delta,d} = -\frac{\Gamma(1+\gamma)}{|\gamma|} \int_{\bbR^d} \Bigl( \bigl( 1 + \|x\|^\delta\bigr)^{|\gamma|} - \|x\|^{\delta |\gamma|} - |\gamma| \, \|x\|^{-\delta(1+\gamma)}\Bigr) \dd x.
\ee
The integrand behaves like $\|x\|^{-\delta(2 + \gamma)}$ for large $\|x\|$ and like $\|x\|^{-\delta (1+\gamma)}$ for small $\|x\|$, so the integral is well-defined. It follows from $(1+\|x\|^{\delta})^{|\gamma|} \leq \|x\|^{\delta |\gamma|} (1 + |\gamma| \, \|x\|^{-\delta})$ that $b_{\gamma,\delta,d}$ is positive. We define $Z$ to be the stable random variable whose Laplace transform for $t\geq0$ is
	\be \label{eq:alg-neg} 
		 \bbE \e{tZ}  = \e{b_{\gamma,\delta,d} \, t^{|\gamma| + d/\delta}}.
	\ee

\begin{theorem} \label{thm:alg-neg-fluct}
	Suppose that $V(x) = \|x\|^\delta$, $\theta_{j} = j^{\gamma}$ with $\gamma<0$, and that $1 < \frac d\delta < \gamma+2$. Then if $\rho>\rho_{\rm c}$, we have
\[
\frac{H_0- (\rho-\rho_{\rm c}^L) L^d}{L^\alpha} \todi Z,
\]
where $\alpha = \frac{\delta}{1 - \delta\gamma / d}$.
\end{theorem}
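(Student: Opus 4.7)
The strategy mirrors the proofs of Theorems~\ref{thm H0} and~\ref{thm clt}. Writing $H_0 = n - M$ with $M = \sum_{x\neq 0} H_x$, the claim reduces to showing, for $t \geq 0$,
\[
\bbE_{L,n}\bigl[\e{-t(M - \rho_{\rm c}^L L^d)/L^\alpha}\bigr] \longrightarrow \exp\bigl(b_{\gamma,\delta,d}\, t^{|\gamma|+d/\delta}\bigr).
\]
The conditioning relation~\eqref{eq:pln-cond} expresses the left-hand side as $\bbE_L^{z_{\rm c}}[h_{n-M}\,\e{-t(\cdots)/L^\alpha}]/\bbE_L^{z_{\rm c}}[h_{n-M}]$, splitting the analysis into an unconditional scaling limit and a conditioning step.

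For the unconditional limit, Poisson independence gives
\[
\log \bbE_L^{z_{\rm c}}\bigl[\e{-t(M-\bbE M)/L^\alpha}\bigr] = \sum_{x\neq 0,\, j\geq 1} j^{\gamma-1}\e{-j\|x/L\|^\delta}\bigl[\e{-jt/L^\alpha} - 1 + jt/L^\alpha\bigr].
\]
I would perform the sum over $j$ first via the polylogarithm asymptotics $\sum_j j^{\nu-1}\e{-js} = \Gamma(\nu) s^{-\nu} + \zeta(1-\nu) + O(s)$, applied with $\nu=\gamma$ and $\nu=\gamma+1$; the $\zeta(-\gamma)$ remainders cancel exactly, and the summand reorganizes, up to subleading corrections, as
\[
\Gamma(\gamma)\bigl[(\|x/L\|^\delta + t/L^\alpha)^{|\gamma|} - \|x/L\|^{\delta|\gamma|} - |\gamma|(t/L^\alpha)\|x/L\|^{-\delta(1+\gamma)}\bigr].
\]
Approximating $L^{-d}\sum_x$ by $\int dy$ (Assumption~\ref{ass 1}) and rescaling $w = (L^\alpha/t)^{1/\delta} y$ produces the overall prefactor $L^d(t/L^\alpha)^{|\gamma|+d/\delta}$. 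The choice $\alpha = \delta/(1-\delta\gamma/d)$ makes $L^{d-\alpha(|\gamma|+d/\delta)} = 1$, and $\Gamma(\gamma) = -\Gamma(1+\gamma)/|\gamma|$ reproduces precisely the constant $b_{\gamma,\delta,d}$.

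For the conditioning step, Theorem~\ref{thm buv} applies to $\theta_j = j^\gamma$ with $\gamma \in (-1,0)$ (the hypotheses are elementary since $\sum_j j^{\gamma-1} < \infty$) and yields $h_m = (1+o(1))\, c\, m^{\gamma-1}$ as $m\to\infty$. Since $\alpha < \delta < d$, fluctuations of $M$ at scale $L^\alpha$ are of lower order than $n = \rho L^d$, so $h_{n-M}/h_{n - \rho_{\rm c}^L L^d} \to 1$ uniformly on the bulk event $\{|M - \rho_{\rm c}^L L^d| \leq L^\alpha \log L\}$. The complementary event is controlled by Lemma~\ref{lem:m-laplace}: its stretched-exponential decay $\e{-cBL^{(d-\delta)/2}}$ dominates the worst-case growth $\e{tCL^{d-\alpha}}$ of the Laplace integrand (valid because $M \geq 0$), and a dominated-convergence argument parallel to the last paragraph of the proof of Theorem~\ref{thm clt} concludes that the conditional and unconditional Laplace transforms share the same limit.

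The principal technical obstacle is the regularization: each of the three terms in the bracketed integrand individually diverges either at infinity (the first two) or at the origin (the third), and only their full combination is absolutely integrable on $\bbR^d$, precisely when $d/\delta < \gamma+2$ as assumed. One must carefully track the subleading terms in the polylogarithm expansion, the Riemann-sum errors in $L^{-d}\sum_x \to \int dy$, and the contribution from the exterior regime $\|x\| \gtrsim L$ (where the small-$s$ polylog expansion breaks down, but the exponential factor $\e{-j\|x/L\|^\delta}$ forces a contribution of order $t^2 L^{d-2\alpha}$ that vanishes exactly under the assumption $d/\delta < \gamma+2$). This case-by-case regime analysis, and verifying the cancellation of the non-integrable leading behaviours against the regularizing correction, is where most of the labor lies.
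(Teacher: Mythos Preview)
Your proposal follows essentially the same route as the paper: reduce to the unconditional Laplace transform of $M$ under $\bbP_L^{z_{\rm c}}$, expand via polylogarithm asymptotics to obtain the bracketed expression $\Gamma(\gamma)[(\mu+\tau)^{|\gamma|}-\mu^{|\gamma|}-|\gamma|\tau\mu^{-(1+\gamma)}]$, recognize a Riemann sum converging to the integral defining $b_{\gamma,\delta,d}$, and handle the conditioning through regular variation of $h_n$ (Theorem~\ref{thm buv}). The paper rescales the lattice sum in one step by $L^{1-\alpha/\delta}$ rather than your two steps ($x/L$ followed by a substitution in the integral), but the computations are equivalent; the paper's organization has the minor advantage that the Riemann sum has an $L$-independent integrand.

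One concrete slip: your claim that Lemma~\ref{lem:m-laplace}'s decay $\e{-cBL^{(d-\delta)/2}}$ dominates the worst-case Laplace growth $\e{tCL^{d-\alpha}}$ is false in this regime, since $\alpha=\delta d/(d+\delta|\gamma|)<\delta$ and hence $d-\alpha>d-\delta>(d-\delta)/2$. The paper is equally terse here, simply noting that $h_n$ is regularly varying and invoking Eq.~\eqref{eq:conditioned-fourier}. A cleaner way to close the gap is to take the bulk event at any scale $o(L^d)$ (regular variation of $h$ still gives $h_{n-M}/h_{(\rho-\rho_{\rm c})L^d}\to 1$ there), apply Lemma~\ref{lem:m-laplace} with $B$ of order one on the complement, and observe that the Laplace integrand is large only where $M$ is small, which is the \emph{light}-tailed side of the limiting stable law; alternatively, work with the characteristic function as in the proof of Theorem~\ref{thm clt}, where the integrand is bounded and only a polynomial bound on $h_{n-M}/\bbE h_{n-M}$ is needed.
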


\begin{remark} 
	If $\delta \geq d$, an analogous result for the fluctuations of $M$ in the grand-canonical ensemble at $z=z_{\rm c}$ still holds, but  we can no longer use Lemma~\ref{lem:m-laplace} in order to pass from the grand-canonical to the canonical ensemble. We suspect that Theorem~\ref{thm:alg-neg-fluct} can be shown with an adequate replacement of Lemma~\ref{lem:m-laplace} (e.g., algebraic deviations bounds based on finite moments), but leave the proof or disproof as an open problem.
\end{remark}

We prove these theorems by showing convergence of Laplace or Fourier transforms. Let
\be
I_{s}(z) = \sum_{j\geq1} j^{s} \theta_{j} z^{j}.
\ee
We have
\begin{align}
	&\log \bbE_L^{z_{\rm c}} \Bigl[ \e{ t \bigl( M - \bbE_L^{z_{\rm c}} M\bigr)} \Bigr] 
		=  \sum_{x\in \bbZ^d\backslash \{0\}} \log \bbE_L^{z_{\rm c}} \Bigl[ \e{ t \bigl( H_x- \bbE_L^{z_{\rm c}} H_x\bigr)} \Bigr], \label{eq:mhx} \\
	&\log \bbE_L^{z_{\rm c}} \Bigl[ \e{ t \bigl( H_x- \bbE_L^{z_{\rm c}} H_x\bigr)} \Bigr] = 
 I_{-1}\bigl(\e{t- V(x/L)} \bigr) -  I_{-1}\bigl( \e{- V(x/L)} \bigr)  - t  I_0 \bigl( \e{- V(x/L)}\bigr).	 \label{eq:hxi}
\end{align} 
Combining Eq.~\eqref{eq:hxi} with asymptotic expansions of $I_{-1}(\e{-\mu})$, $I_0(\e{-\mu})$ around $\mu=0$, we obtain two lemmas on single-site fluctuations.

\begin{lemma} \label{lem:ewens-fluct} 
	Suppose that $V(x) = \|x\|^\delta$ and that $\theta_j = \theta$ for all $j$. 
	Then under $\bbP_L^{z_{\rm c}}$, for every fixed $x\in \bbZ^d\backslash \{0\}$, we have
	\[		
 \frac{ H_x - \bbE_L^{z_{\rm c}} H_x} {L^\delta}\todi  \frac{1}{\|x\|^\delta} (\theta - G_x), 
	\]
	where $G_x$ is a Gamma random variable with parameter $(\theta,1)$.
 \end{lemma}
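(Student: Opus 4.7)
The plan is to prove the convergence in distribution by establishing pointwise convergence of characteristic functions and then invoking L\'evy's continuity theorem.

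The starting point is formula \eqref{eq:hxi}, which expresses the cumulant generating function of $H_x - \bbE_L^{z_{\rm c}} H_x$ in terms of $I_{-1}$ and $I_0$. In the Ewens case $\theta_j \equiv \theta$, both admit closed forms: $I_{-1}(z) = -\theta \log(1-z)$ and $I_0(z) = \theta z/(1-z)$. Substitution yields the explicit expression
\[
\log \bbE_L^{z_{\rm c}}\bigl[\e{t(H_x - \bbE H_x)}\bigr] = \theta \log \frac{1 - \e{-V(x/L)}}{1 - \e{t - V(x/L)}} - \frac{t\theta}{\e{V(x/L)} - 1}.
\]

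Next I would substitute $t = \ii u / L^\delta$ together with $V(x/L) = \|x\|^\delta/L^\delta$, and Taylor-expand as $L \to \infty$ with $u$ fixed. Writing $a := \|x\|^\delta/L^\delta$, we have $1 - \e{-a} = a + O(a^2)$ and $1 - \e{\ii u/L^\delta - a} = (a - \ii u / L^\delta)\bigl(1 + O(L^{-\delta})\bigr)$, so the ratio inside the logarithm converges to $\|x\|^\delta/(\|x\|^\delta - \ii u)$, and similarly $(\ii u / L^\delta)/(\e{a} - 1) \to \ii u/\|x\|^\delta$. Collecting these limits,
\[
\log \bbE_L^{z_{\rm c}}\bigl[\e{(\ii u / L^\delta)(H_x - \bbE H_x)}\bigr] \; \longrightarrow \; -\theta \log\!\Bigl(1 - \frac{\ii u}{\|x\|^\delta}\Bigr) - \frac{\ii u \theta}{\|x\|^\delta},
\]
which, using $\bbE \e{u G_x} = (1-u)^{-\theta}$ for $G_x \sim \Gamma(\theta, 1)$, is (up to the sign convention in the statement) the characteristic exponent of the centred Gamma-type variable $\frac{1}{\|x\|^\delta}(\theta - G_x)$.

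The only technical step requiring any care is the uniform control of the Taylor remainders in the two expansions above for $u$ in compact subsets of $\bbR$; this is routine because every elementary function involved is analytic in a neighbourhood of $0$, and the scaling $t = \ii u /L^\delta$ tends to $0$ uniformly on compacts. L\'evy's continuity theorem then delivers the convergence in distribution.
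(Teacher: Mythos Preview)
Your proof is correct and follows the same approach as the paper's: start from \eqref{eq:hxi}, insert the closed forms $I_{-1}(z)=-\theta\log(1-z)$ and $I_0(z)=\theta z/(1-z)$, expand as $L\to\infty$, and identify the limiting transform; the only cosmetic difference is that the paper works with real Laplace transforms rather than characteristic functions. Your parenthetical remark on the sign is apt---the limit that both computations actually produce is $\|x\|^{-\delta}(G_x-\theta)$, and the sign in the lemma's display (and in one line of the paper's own proof) is a slip that is silently absorbed when the lemma is summed over $x$ and combined with $H_0=n-M$ in the proof of Theorem~\ref{thm:ewens-fluct}.
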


\begin{proof} 
For $\theta_j\equiv \theta$ we have $I_{-1}(z) = - \theta \log(1- z)$. 
As $\mu\to 0$,
\be
\begin{aligned}
	&I_{-1} (\e{-\mu}) = - \theta \log (1- \e{-\mu}) =  - \theta \log \mu + \theta \frac{\mu}{2} + O(\mu^2) \\
	&I_0(\e{-\mu}) =  \frac{\theta}{1-\e{-\mu}} = \theta \mu^{-1} + o(\mu^{-1})
\end{aligned}
\ee 
hence as long as $t\to 0$, $t<\mu$, 
\be
	 I_{-1}(\e{t-\mu}) - I_{-1}(\e{-\mu}) - t I_0(\e{-\mu}) = - \theta \log \bigl(1- \tfrac{t}{\mu} \bigr) + \theta \tfrac{t}{\mu} + o(1).
\ee
Applying this to $\mu= V(x/L)$ and  $t= s L^{-\delta}$, we get
\be
	\log \bbE_L^{z_{\rm c}} \Bigl[ \e{ s L^{-\delta} \bigl( H_x - \bbE_L^{z_{\rm c}} H_x\bigr)} \Bigr]
	=  - \theta  \Bigl( \log \bigl( 1 - \tfrac{s}{\|x\|^\delta} \bigr) -
 \tfrac{s}{\|x\|^\delta} \Bigr) +o(1).
\ee
On the other hand,
\be 
\log\bbE \bigl[ \e{s(\theta-G)}\bigr] = \theta \bigl( s - \log(1+s) \bigr).
\ee
The result follows with the help of the usual continuity theorems (convergence of Laplace transforms implies convergence in distribution).
\end{proof}

\begin{lemma} \label{lem:alg-fluct} 
	Suppose that $V(x) = \|x\|^\delta$ and $\theta_j = j^\gamma$ with $\gamma>0$.
	Then 
	under $\bbP_L^{z_{\rm c}}$, for every fixed $x\in \bbZ^d\backslash \{0\}$, 
	\be
		 \frac{ H_x - \bbE_L^{z_{\rm c}}H_x }{L^{\delta(\gamma+2)/2}}\ \todi \frac{( \Gamma(\gamma+2))^{1/2}}{\|x\|^{\delta(\gamma+2)/2}} \mathcal{N}(0,1).
	\ee
\end{lemma}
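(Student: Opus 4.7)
The plan is to follow the same template as in Lemma~\ref{lem:ewens-fluct}: compute the log-Laplace transform of $(H_x - \bbE_L^{z_{\rm c}} H_x)/L^{\delta(\gamma+2)/2}$ from the single-site identity~\eqref{eq:hxi}, identify the limit, and apply the continuity theorem. First I would recognize $I_{-1}$ and $I_0$ as polylogarithms of parameters $1-\gamma$ and $-\gamma$. Using the standard expansion of $\mathrm{Li}_s(\e{-\mu})$ as $\mu \to 0^+$ (as derived via Mellin transforms, see \cite[Ch.~VI.8]{FS}), one has
\be
I_s(\e{-\mu}) = \Gamma(s+\gamma+1)\,\mu^{-(s+\gamma+1)} + \text{(entire series in $\mu$)},
\ee
valid whenever $s+\gamma+1$ is not a non-positive integer (and with a logarithmic correction otherwise, which will not affect the leading behaviour).

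Next, I Taylor expand $f(t) := I_{-1}(\e{t-\mu})$ around $t=0$. The identity $z I_s'(z) = I_{s+1}(z)$ yields $f^{(k)}(0) = I_{k-1}(\e{-\mu})$ for all $k\geq 1$, so
\be
I_{-1}(\e{t-\mu}) - I_{-1}(\e{-\mu}) - t\, I_0(\e{-\mu}) = \sum_{k\geq 2}\frac{t^k}{k!}\, I_{k-1}(\e{-\mu}).
\ee
I then substitute $\mu = \|x\|^\delta / L^\delta$ and $t = s\, L^{-\delta(\gamma+2)/2}$. Since $\gamma>0$ we have $t \ll \mu$ as $L\to\infty$, so $\e{t-\mu}<1$ and the polylog expansion applies. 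For each $k\geq 2$, using the leading asymptotics of $I_{k-1}(\e{-\mu})$,
\be
\frac{t^k}{k!}\, I_{k-1}(\e{-\mu}) = \frac{s^k}{k!}\, \frac{\Gamma(k+\gamma)}{\|x\|^{\delta(k+\gamma)}}\, L^{\delta\gamma(1-k/2)} \bigl(1+o(1)\bigr).
\ee
The exponent of $L$ vanishes at $k=2$ and is strictly negative for $k\geq 3$. Therefore only the $k=2$ term survives, giving
\be
\log \bbE_L^{z_{\rm c}}\Bigl[ \e{s L^{-\delta(\gamma+2)/2}(H_x - \bbE_L^{z_{\rm c}} H_x)}\Bigr] \longrightarrow \frac{s^2\, \Gamma(\gamma+2)}{2\,\|x\|^{\delta(\gamma+2)}},
\ee
which is the log-moment generating function of $\sqrt{\Gamma(\gamma+2)}\,\|x\|^{-\delta(\gamma+2)/2}\,\caN(0,1)$, and the usual continuity theorem for Laplace transforms yields the claimed convergence in distribution.

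The main obstacle is the control of the tail $\sum_{k\geq 3}\frac{t^k}{k!} I_{k-1}(\e{-\mu})$, to ensure that the termwise asymptotics actually imply the vanishing of the sum. Since the ratio between consecutive terms behaves like $t\cdot (k+\gamma)/\mu$ (up to constants) and $t/\mu \to 0$, one can dominate the tail by a convergent geometric series uniformly in $L$ large, which rigorously justifies exchanging limit and summation. The extra care required concerns the borderline case $\gamma\in\bbN$ where logarithmic corrections appear in the polylog expansion, but these too are swallowed by the power-of-$L$ gain for $k\geq 3$.
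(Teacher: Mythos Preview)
Your proof is correct and follows essentially the same route as the paper: both compute the log-Laplace transform via Eq.~\eqref{eq:hxi}, invoke the polylogarithm asymptotics from \cite[Ch.~VI.8]{FS}, and extract the dominant quadratic term $\tfrac{t^2}{2}I_1(\e{-\mu})\sim\tfrac12\Gamma(\gamma+2)\mu^{-\gamma-2}t^2$. The paper organizes the remainder via the integral identity $\int_0^t\bigl(I_0(\e{s-\mu})-I_0(\e{-\mu})\bigr)\,\dd s$ (working first with modified weights for which $I_0$ has a closed form, then transferring to $\theta_j=j^\gamma$ via the polylog expansion), whereas you expand directly in the Taylor/cumulant series; this is a cosmetic difference only.
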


\begin{proof} 
Consider first the case $\gamma\notin \bbN$, and slightly modified weights corresponding to 
\be \label{eq:gamma-dilog}
	I_0(\e{-\mu}) = \frac{\Gamma(\gamma+1)}{(1- \e{-\mu})^{\gamma+1}} - \Gamma(\gamma+1).
\ee
The corresponding weights satisfy $\theta_j = j^\gamma (1+o(1))$~\cite{ercolani-ueltschi}. 
We note the identity 
\be 
	  I_{-1}(\e{t-\mu}) - I_{-1}(\e{-\mu})- tI_0(\e{-\mu})  =  \int_0^t \bigl( I_0(\e{s-\mu}) - I_0(\e{-\mu})\bigr) \dd s.
\ee
For $\mu \to 0$, we have 
\be \label{eq:izero}
	I_0(\e{-\mu})  = \Gamma(\gamma+1)\mu^{-\gamma-1} +O(\mu^{-\gamma}). 
\ee
More precisely, $I_0(\e{-\mu})$ has an asymptotic expansion of the form $\mu^{-\gamma-1} \sum_{n\geq 0} a_n \mu^n$. As $t,\mu \to 0$ with $t = o(\mu)$, 
\be
\begin{aligned} 
	 \int_0^t \Bigl( (\mu- s)^{-\gamma- 1} - \mu^{-\gamma- 1}\Bigr) \dd s 
		& = \tfrac{1}{\gamma} \Bigl( (\mu - t)^{-\gamma} - \mu^{-\gamma}\Bigr) - t \mu^{-\gamma - 1}  \\
	& = \bigl(1+o(1)\bigr) \tfrac{1}{2} (\gamma+1) \mu^{-\gamma-2} t^2.  
\end{aligned} 
\ee
The other terms of the asymptotic expansion --- which can be pushed to arbitrarily high order ---  can be evaluated in a similar way and are found to give contributions of smaller order. 
Choosing $t= s L^{- \delta(\gamma+2)/2}$ and  $\mu = \|x\|^\delta / L^\delta$, we get 
\be
	\lim_{L\to\infty} \log \bbE_L^{z_{\rm c}} \Bigl[ \exp\biggl( \frac{s (H_x- \bbE_L^{z_{\rm c}} H_x) }{L^{\delta(\gamma+2)/2}}\biggr) \Bigr] = 
	 \tfrac{1}{2}  \Gamma(\gamma+2) s^2 \|x\|^{- \delta(\gamma+2)}
\ee 
and we recognize the Laplace transform of a normal random variable. This proves the lemma for the modified weights from~\cite{ercolani-ueltschi}. For the original weights $\theta_j = j^\gamma$, we recognize in $I_{-1}(z)$ a \emph{dilogarithm}, and the results still apply because the dilogarithm and the function~\eqref{eq:gamma-dilog} have similar asymptotic expansions~\cite[Chapter VI.8]{FS}.
\end{proof} 

Now we come to the proofs of the theorems. The proof of Theorem~\ref{thm:ewens-fluct} 
is based on Lemma~\ref{lem:ewens-fluct} and is analogous to the proofs in~\cite{BP,CD} and to the proof of Theorem~\ref{thm:alg-fluct}, and it is therefore omitted. In contrast with algebraically growing weights no additional condition on $\gamma$ is needed in order to go from the grand-canonical to the canonical ensemble. This is because $h_n = \theta(\theta+1) \cdots(\theta+n-1) /n!$ is a slowly varying function of $n$. 

\begin{proof}[Proof of Theorem~\ref{thm:alg-fluct}]
We treat first the fluctuations of $M$ in the grand-canonical ensemble at $z=z_{\rm c}$. 
To this aim we look at the Fourier tranform of the law of $L^{-\delta(\gamma+2) }(M - \bbE_L^{z_{\rm c}} M)$ under $\bbP_L^{z_{\rm c}}$; it is given by a sum over $x$ analogous to Eq.~\eqref{eq:mhx}. We split the sum in two parts. 
Revisiting the proof of Lemma~\ref{lem:alg-fluct}, we find that 
for suitable $c$ and $\eps$, all $\mu \leq \eps^\delta$, and all $|t| \leq \frac{1}{2} \mu$, 
\be
	\bigl| I_{-1} (\e{\ii t - \mu}) -  I_{-1} (\e{- \mu}) - \ii t I_0 (\e{-\mu}) \bigr| 
		\leq (\gamma+1) |t|^2 \mu^{-\gamma-2}.
\ee
We apply this inequality to $\mu = \|x\|^\delta / L^\delta$ with $\|x\|\leq \eps L$, and $t= s / L^{\delta(\gamma+2)}$, with $L$ large enough so that $ |s| L^{-\delta(\gamma+2)} \leq \frac{1}{2} L^{-\delta}$. Then 
\be
	\Bigl|\log \bbE_L^{z_{\rm c}} \bigl[ \e{\ii s L^{-\delta(\gamma+2)/2} (H_x- \bbE_L^{z_{\rm c}} H_x)} \bigr] \Bigr| \leq (\gamma+1) \frac{s^2}{\|x\|^{\delta(\gamma+2)}}.
\ee
We have $\delta(\gamma+2) >d$, so the right side can be summed over $x\in \bbZ^d\setminus \{0\}$. From Lemma~\ref{lem:alg-fluct} and dominated convergence, we get
\be
	\lim_{L\to \infty} \sum_{\substack{x\in \bbZ^d\backslash\{0\} \\ \|x\|\leq \eps L}} 
\log \bbE_L^{z_{\rm c}} \bigl[\e{\ii s L^{-\delta(\gamma+2)/2}(H_x - \bbE_L^{z_{\rm c}} H_x)} \bigr] = -\frac{s^2}{2} \sum_{x\in \bbZ^d\backslash \{0\}} \frac{\Gamma(\gamma+2) }{\|x\|^{\delta(\gamma+2)}}, 
\ee
which is the logarithm of the Fourier transform of the limiting normal random variable. It remains to show that the sum over $\|x\|\geq \eps L$ does not contribute. Notice that 
\be
	\sum_{j\geq 1} j \theta_j \int_{\|x\|>\eps} \e{- j V(x)} \dd x <\infty. 
\ee 
Arguments similar to the proof of Lemma~\ref{lem:m-laplace} show that
\be
	 \frac{1}{L^d} \bbE_L^{z_{\rm c}} \Bigl[ \sum_{\substack{x\in \bbZ^d \\ \|x\|\geq \eps L}} \bigl( H_x - \bbE_L^{z_{\rm c}} H_x \bigr)^2 \Bigr] = O(1).  
\ee
Write $M_{\eps}$ for the sum of the $H_x$ over $\|x\|\geq \eps L$.  We have, for every $\kappa>0$, 
\be
	\bbP_L^{z_{\rm c}} \Bigl( \frac{M_\eps}{L^{\delta(\gamma+2)/2 }} \geq \kappa \Bigr) 
		\leq \kappa^{-2} L^{-\delta(\gamma+2)} \bbE_L^{z_{\rm c}} \Bigl[ (M_\eps - \bbE_L^{z_{\rm c}} M_\eps)^2 \Bigr] = O(L^{d- \delta(\gamma+2)}) \to 0.   
\ee
It follows that $\bbE_L^{z_{\rm c}}[\exp( \ii s (M_\eps - \bbE_L^{z_{\rm c}} M_\eps) L^{-\delta(\gamma+2)/2})] \to 1$: the sites $\|x\|\geq 1$ do not contribute to the Fourier transform. Thus we have proven that $M$ fluctuates as $L^{\delta(\gamma+2)/2}$ times the limiting Gaussian variable. We conclude with the help of Eq.~\eqref{eq:conditioned-fourier} as in the proof of Theorems~\ref{thm H0} and Theorems~\ref{thm clt}. Let  $\eps_L = L^\beta$. The condition $\beta + \frac{1}{2} (d+\delta) > d \frac{\gamma}{\gamma+1}$ ensures that $|M- \rho_{\rm c}^L L^d|\geq \eps_L L^{(d+\delta)/2}$ does not contribute; the condition $\delta (1+\frac{1}{2}\gamma) \leq \beta + \frac{1}{2} (d+\delta) < \frac{d}{\gamma+1}$ ensures that the $h_n$ is approximately constant on the scale of fluctuations and in the interval $|M - \rho_{\rm c}^L L^d|\leq \eps_L L^d$. Such a $\beta$ exists in the region of parameters considered here. 
\end{proof}

\begin{proof} [Proof of Theorem~\ref{thm:alg-neg-fluct}]
We proceed as in the proofs of Theorems~\ref{thm H0} and Theorems~\ref{thm clt}. Observe that by Theorem~\ref{thm buv}, $h_n = (1+o(1))\theta_n /n = (1+o(1)) n^{-|\gamma| -1}$ is a regularly varying function of $n$. We can invoke Eq.~\eqref{eq:conditioned-fourier} and prove that
\be
\label{ca devrait converger}
\bbE_{L}^{z_{\rm c}} \Bigl( \e{-\frac t{L^{\alpha}} (M - \bbE_{L}^{z_{\rm c}} M)} \Bigr) \longrightarrow \bbE( \e{tZ} )
\ee
for each fixed $t$. Using Eqs~\eqref{eq:mhx} and \eqref{eq:hxi}, we have
\be
\label{ca avance}
\log \bbE_{L}^{z_{\rm c}} \Bigl( \e{-\frac t{L^{\alpha}} (M - \bbE_{L}^{z_{\rm c}} M)} \Bigr) = \sum_{x \in \bbZ^{d} \setminus \{0\}} \Bigl( I_{-1}\bigl(\e{-\frac t{L^{\alpha}} - V(x/L)}\bigr) -I_{-1}\bigl(\e{- V(x/L)}\bigr) + \frac t{L^{\alpha}} I_0\bigl(\e{- V(x/L)}\bigr) \Bigr).
\ee
Fix $\eps>0$. Just as in the proof of Theorem~\ref{thm:alg-fluct}, one can show that sites $x$ with $\|x\|\geq \eps L$ are irrelevant. Eq.~\eqref{eq:izero} stays true \cite[Chapter VI.8]{FS}.
Therefore as $\eps\to 0$, uniformly in $L$, Eq.~\eqref{ca avance} is equal to
\be
\begin{split}
&-\bigl( 1+o(1) \bigr) \frac{\Gamma(1+\gamma)}{|\gamma|} \sum_{\substack{x\in \bbZ^d\backslash \{0\} \\ \|x\|\leq \eps L}} \biggl[ \Bigl( \frac{t}{L^\alpha} + \frac{\|x\|^\delta}{L^\delta} \Bigr)^{|\gamma|} - \Bigl( \frac{\|x\|^\delta}{L^\delta} \Bigr)^{|\gamma|} - |\gamma| \frac{t}{L^\alpha} \Bigl( \frac{\|x\|^\delta}{L^\delta}\Bigr)^{|\gamma|-1} \biggr] \\
	&= -\bigl( 1+o(1) \bigr) \frac{\Gamma(1+\gamma)}{|\gamma|} \sum_{\substack{x\in \bbZ^d\backslash \{0\} \\ \|x\|\leq \eps L}} \frac{1}{L^{\alpha|\gamma|}} \biggl[ \Bigl( t + \Bigl\| \frac x{L^{1-\frac\alpha\delta}} \Bigr\|^{\delta} \Bigr)^{|\gamma|} - \Bigl\| \frac x{L^{1-\frac\alpha\delta}} \Bigr\|^{\delta |\gamma|} - |\gamma| t \Bigl\| \frac x{L^{1-\frac\alpha\delta}} \Bigr\|^{-\delta (1+\gamma} \biggr].
\end{split} 
\ee
We recognize a Riemann sum in the equation above, and this converges as $L\to\infty$ to
\be
-\frac{\Gamma(1+\gamma)}{|\gamma|} \int_{\bbR^{d}} \Bigl[ \bigl( t + \|x\|^{\delta} \bigr)^{|\gamma|} - \|x\|^{\delta |\gamma|} - |\gamma| \, t \, \|x\|^{-\delta (1+\gamma)} \Bigr] \dd x = t^{|\gamma| + d/\delta} \, b_{\gamma,\delta,d}.
\ee
This proves~\eqref{ca devrait converger}, and Theorem~\ref{thm:alg-neg-fluct}.
\end{proof}

\subsection{Size of partition elements} \label{sec:phase-diagram} 

In this section we have so far discussed the occupation number at $x=0$. We now turn to the partition structure of the condensate. Again we consider the potential $V(x)=\|x\|^{\delta}$ and the weights $\theta_j = j^\gamma$. In view of Proposition~\ref{prop:conditional} (a), and since by Theorem ~\ref{thm H0} we have $H_0/L^d\to \rho-\rho_{\rm c}$ when $\rho>\rho_{\rm c}$, it is natural to assume that the limiting structure of the partition $\lambda_0$ under $\bbP_{L,n}$ is the same as that of a single-site Gibbs partition $\nu_m$ when $m \sim (\rho-\rho_{\rm c})L^d\to \infty$. We do not wish to carry out the proof here
(see however the proof of Proposition~\ref{prop:equivalence} in Section~\ref{sec:square})
and we base our discussion on the assumption that this is true.

Recall the definition \eqref{def nu oo} of $\nu_{\infty}$, and let 
\be
\nu_{\rm macro} = \lim_{\eps \searrow 0} \limtwo{n\to\infty}{\rho L^{d}=n} \bbE_{L,n} \Bigl( \frac1n \sum_{j>\eps n} j \, R_{j} \Bigr).
\ee
be the fraction of particles in partition components of macroscopic size, i.e., of the order of  $n$. It is clear that $\nu_{\infty} \geq \nu_{\rm macro}$. There are three phases in the $(\gamma,\rho)$-plane, as illustrated in Fig.~\ \ref{fig phd}.  For $\rho<\rho_{\rm c}$, we have $\nu_\infty= \nu_{\rm macro}= 0$ by Theorem~\ref{thm trans dens}.

For $\rho>\rho_{\rm c}$, the site 0 is macroscopically occupied and the partition structure results from the Gibbs measure on partitions, $\nu_{m}$. There are three possibilities, depending on the parameter $\gamma$:
\begin{itemize}
\item[$\gamma<0$] The partition displays a unique large element that contains all the indices save a (random) finite number \cite{buv}. This gives a phase with macroscopic elements.
\item[$\gamma =0$] (Uniform random permutations or the Ewens measure.) The number of partition elements is logarithmic, they have typically a size proportional to $m$ and their joint law is asymptotically a Poisson-Dirichlet law, as 
for the lengths of cycles in uniform random permutations; see e.g.\ \cite{arratia-barbour-tavare}.
Here also, $\nu_{\rm macro} = \nu_{\infty}$.
\item[$\gamma>0$] It was proved in \cite{ercolani-ueltschi,DM} that typical elements have size of order $m^{1/(\gamma+1)}$, with Gamma distribution. Moreover the number of elements divided by   
$m^{\gamma/(\gamma+1)}$ converges in law to some constant \cite{erlihson-granovsky, ercolani-ueltschi}, and the limit behavior of the partition is described by a deterministic limit shape \cite{erlihson-granovsky-limitshapes,cipriani-zeindler}: there is a function $w(x)$ such that for all $x>0$, we have the convergence in law under $\nu_m$: 
\be
	\frac{1}{m^{\gamma/(\gamma+1)}} \# \{ j: \lambda_j \geq x m^{1/(\gamma+1)} \} \to w(x).
\ee
\end{itemize}

\begin{remark}
The Poisson-Dirichlet process appearing for $\gamma=0$ has been generalized and belongs to a 
 two-parameter family introduced in~\cite{pitman-yor}, called \emph{two-parameter Poisson-Dirichlet} or \emph{Pitman-Yor} processes. They can be constructed using subordinators or stick-breaking schemes~\cite{pitman-yor} and can appear as limits of more general Gibbs partitions where the weight is allowed to depend explicitly on the number of cycles~\cite{pitman}. 
\end{remark}

\begin{figure}[htb]
\begin{center}
\begin{picture}(0,0)%
\includegraphics{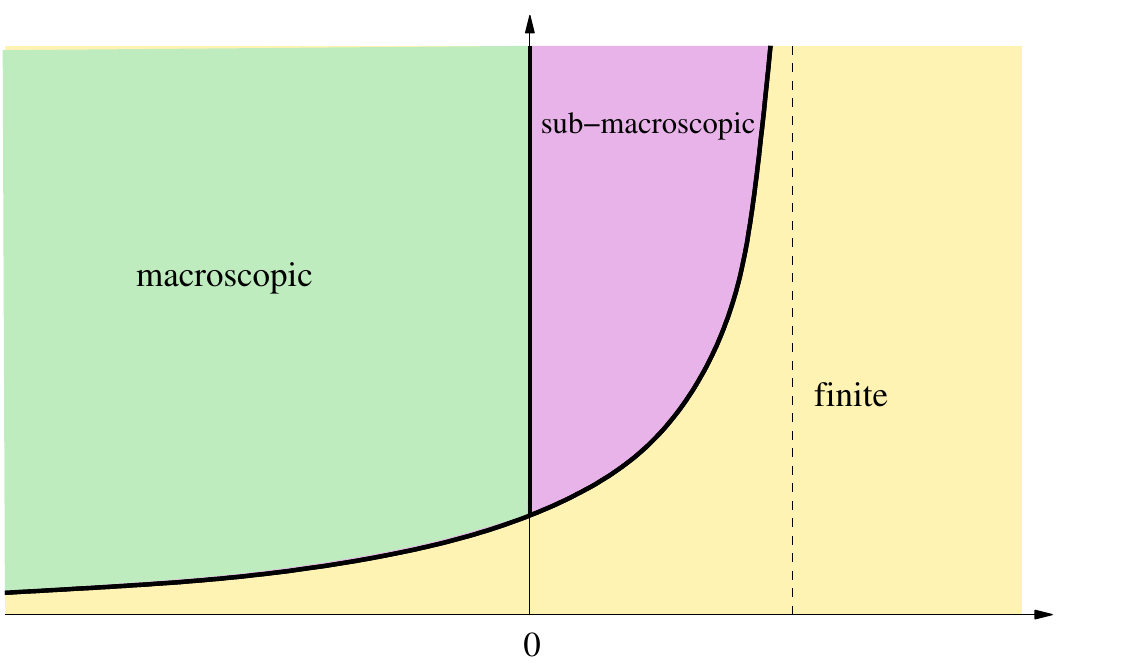}
\end{picture}%
\setlength{\unitlength}{2763sp}%
\begingroup\makeatletter\ifx\SetFigFont\undefined%
\gdef\SetFigFont#1#2#3#4#5{%
  \reset@font\fontsize{#1}{#2pt}%
  \fontfamily{#3}\fontseries{#4}\fontshape{#5}%
  \selectfont}%
\fi\endgroup%
\begin{picture}(7736,4564)(1168,-5519)
\put(6376,-5461){\makebox(0,0)[lb]{\smash{{\SetFigFont{11}{13.2}{\rmdefault}{\mddefault}{\updefault}{\color[rgb]{0,0,0}$\frac d\delta - 1$}%
}}}}
\put(2101,-3361){\makebox(0,0)[lb]{\smash{{\SetFigFont{11}{13.2}{\rmdefault}{\mddefault}{\updefault}{\color[rgb]{0,0,0}$\nu_{\rm macro} = \nu_\infty > 0$}%
}}}}
\put(4951,-2311){\makebox(0,0)[lb]{\smash{{\SetFigFont{11}{13.2}{\rmdefault}{\mddefault}{\updefault}{\color[rgb]{0,0,0}$\nu_\infty > 0$}%
}}}}
\put(4951,-2686){\makebox(0,0)[lb]{\smash{{\SetFigFont{11}{13.2}{\rmdefault}{\mddefault}{\updefault}{\color[rgb]{0,0,0}$\nu_{\rm macro} = 0$}%
}}}}
\put(8551,-5236){\makebox(0,0)[lb]{\smash{{\SetFigFont{11}{13.2}{\rmdefault}{\mddefault}{\updefault}{\color[rgb]{0,0,0}$\gamma$}%
}}}}
\put(6151,-4111){\makebox(0,0)[lb]{\smash{{\SetFigFont{11}{13.2}{\rmdefault}{\mddefault}{\updefault}{\color[rgb]{0,0,0}$\nu_{\rm macro} = \nu_\infty = 0$}%
}}}}
\put(4951,-1111){\makebox(0,0)[lb]{\smash{{\SetFigFont{11}{13.2}{\rmdefault}{\mddefault}{\updefault}{\color[rgb]{0,0,0}$\rho$}%
}}}}
\end{picture}%
\end{center}
\caption{\small Phase diagram for the family of measures with $V=\|x\|^{\delta}$ and $\theta_{j} = j^{\gamma}$. The transition line bordering the finite phase is given by the transition density $\rho = \rho_{\rm c}$ defined in \eqref{Einstein}. }
\label{fig phd}
\end{figure}

\section{Square potentials} \label{sec:square}

This last section is devoted to square traps, $\e{-V(x)} = \mathbf{1}_{(-\frac12,\frac12]^d}(x)$.
There is not much spatial structure left, as the $L^{d}$ available sites are all alike with indifferent location. But this case is actually very interesting mathematically as it involves conditioned random variables. And it is relevant in mathematical physics because it describes the stationary measure of the zero-range process, and certain equilibrium measures in droplet formations.

The results of Theorems \ref{thm trans dens} and \ref{thm infinite elements} are valid here. In the case $z_{\rm c}=1$, the transition density \eqref{Einstein} takes the simple form
\be
\rho_{\rm c} = \sum_{j\geq1} \theta_{j}.
\ee
We refer to our previous discussion of the zero-range process for background and literature behind this formula. In contrast to the confining potentials of Section \ref{sec:traps}, condensation does not take place at $x=0$ but on a random location picked uniformly at random. On that site, the partition has one large element. Instead of $H_{0}$, we consider here the following random variables:
\be
\begin{split}
&M_L = \max_{x\in \bbZ^d, \|x\|_{1} \leq \frac L2} H_x, \\
&T_L = \max\{ j \in \bbN:\ R_j \geq 1\}.
\end{split}
\ee

The main theorem of this section is due to Nagaev \cite{nagaev}, and to Armend\'ariz, Grosskinsky, and Loulakis \cite{AGL}, see Theorem \ref{thm:homogen} below. It concerns the random variable $M_L$ and we complement it with a result for $T_L$. Before this we discuss the grand-canonical measure $\bbP_{L}^{z_{\rm c}}$ and the connection with Cram{\'e}r series. At the end of the section, we comment on the relation with surface tension and the critical point of fluids. 

For concreteness we focus on algebraic and stretched exponential weights, $\theta_j= j^{-\alpha}$ with $\alpha >1$ or $\theta_j =j \exp(- j^\gamma)$ with $0<\gamma<1$. Under $\bbP_{L}^{z_{\rm c}}$ the random variables $H_{x}$ are i.i.d.\ and their distribution satisfies
\be
\bbP_L^{z_{\rm c}}(H_x=n) = h_n \exp( - \sum_{j\geq 1} \theta_j/j) = \frac{\theta_n}n (1+o(1)).
\ee
The last relation follows from Theorem~\ref{thm buv}. The mean and variance of $H_{x}$ are
\be
\begin{split}
&\bbE_{L}^{z_{\rm c}}(H_{x}) = \sum_{j\geq1} \theta_{j} = \rho_{\rm c}, \\
&\bbE_{L}^{z_{\rm c}}\bigl( [H_{x}-\bbE_{L}^{z_{\rm c}}(H_{x})]^{2} \bigr) = \sum_{j\geq1} j \theta_{j} = \sigma_{\rm c}^{2}.
\end{split}
\ee
The variance is consistent with Eq.~\eqref{var M}.

It turns out that the limit law for stretched exponential weights with fast decay ($\frac{1}{2} \leq \gamma <1$) involves a deterministic shift $\Delta_L$. In order to define it, consider first the cumulant generating function 
\be
	\varphi(t) = \log \bbE^{z_{\rm c}}_L \bigl( \e{ t H_x} \bigr) = \sum_{j\geq 1} \frac{\theta_j}{j} \bigl( \e{tj} - 1\bigr).
\ee
It is finite for $t\leq 0$. Given $\tau <0$, let 
$t_\tau$ be the unique solution of the equation $\varphi'(t) = \rho_{\rm c} + \tau$. 
Then as $\tau \nearrow 0$, the Legendre transform $\tilde\varphi(\tau) = \sup_{t\in \bbR} (t \tau - \varphi(t))$ has an  asymptotic expansion of the form
\be \label{eq:cramer} 
	- \tilde \varphi(\tau) = \varphi(t_\tau) - (\rho_{\rm c}+\tau) t_\tau  = - \frac{\tau^2}{2\sigma^2} + \tau^3 \sum_{k \geq 0} \lambda_k \tau^k.
\ee
The coefficients $\lambda_k$ are rational functions of the cumulants and they form the \emph{Cram{\'e}r series} of $H_{x}$ ~\cite{ibragimov-linnik}.  In a more general context, the series provides corrections to the central limit theorem when looking at \emph{moderate deviations} that are larger than $\sqrt{n}$ but smaller than $n$; see Chapter~7 in \cite{ibragimov-linnik}. For heavy-tailed variables, the Cram{\'e}r series diverges and we truncate it at 
$t:= \lfloor (1-\gamma)^{-1} - 2 \rfloor+1$. Let
\be \label{eq:surface}
	f_{L}(\Delta) =  \bigl( (\rho- \rho_{\rm c}) L^d  - \Delta\bigr)^\gamma +  \frac{\Delta^2}{2\sigma_{\rm c}^2 (L^d-1)} - \frac{\Delta^3}{(L^d-1)^2} \sum_{k= 0}^t \lambda_k \bigl( \frac{\Delta}{L^d-1}\bigr)^k.
\ee
It follows from the results of Nagaev~\cite{nagaev} that for $\rho>\rho_{\rm c}$ and $\frac{1}{2} \leq \gamma <1$, 
\be \label{eq:nagaev} 
	\bbP_L^{z_{\rm c}} \Bigl( \sum_{x\in \bbZ^d, \|x\|_{1} \leq L/2} H_x = \rho L^d\Bigr) 
		 = \bigl( 1+o(1)\bigr) L^d \exp \Bigl( - \min_{\Delta>0} f_{L}(\Delta) \Bigr). 
\ee
The interpretation of Eq.~\eqref{eq:nagaev} is that one of the $L^d$ sites captures almost all of the excess particles $(\rho- \rho_{\rm c}) L^d$, the remaining $L^d-1$ sites have a small excess $\Delta$ of particles, and the shift $\Delta$ minimizes the free energy penalty $f_{L}(\Delta)$.  In order to compute $\Delta$,  it is convenient to change variables and write $\Delta = a (\rho- \rho_{\rm c}) L^d $; setting the derivative equal to zero yields the equation 
\be
	\frac{\gamma \sigma_{\rm c}^2 (L^d-1)}{((\rho- \rho_{\rm c}) L^d)^{2-\gamma}} = a (1-a)^{1-\gamma} \Bigl( 1- \sigma_{\rm c}^2 \sum_{k=0}^t (k+3) \lambda_k \Bigl( \frac{(\rho- \rho_{\rm c}) L^d a}{L^d-1} \Bigr)^{k+2} \Bigr).
\ee
It has a solution $a \sim \gamma \sigma^2 L^{d (\gamma-1)} (\rho-\rho_{\rm c})^{\gamma-2}$. The minimizer $\Delta_L$ satisfies
\be
\label{Delta min}
	\Delta_L = \bigl(1+o(1)\bigr) \gamma \sigma_{\rm c}^2  (\rho-\rho_{\rm c})^{\gamma-1}  L^{d\gamma} .
\ee
When $0<\gamma<\frac{1}{2}$, the shift $\Delta_L \ll \sqrt{L^d}$ is negligible and Eq.~\eqref{eq:nagaev} is replaced by the simpler relation~\cite{nagaev}
\be \label{eq:heavy-tailed}
	\bbP_L^{z_{\rm c}} \Bigl( \sum_{x\in \bbZ^d, \|x\|_{1} \leq L/2} H_x = \rho L^d\Bigr)  \sim  L^d \bbP_L^{z_{\rm c}} \bigl( H_x= (\rho-\rho_{\rm c}) L^d\bigr) \sim  \frac{\theta_{(\rho-\rho_{\rm c})L^d}}{\rho-\rho_{\rm c}}.
\ee
(The last identity follows from Theorem~\ref{thm buv}). Eq.~\eqref{eq:heavy-tailed} also holds in the case of the algebraic weights $\theta_{j} = j^{\alpha}$ with $\alpha>1$ \cite{doney}. 

Before formulating the theorem on $M_L$ we recall the definition of $\alpha$-stable random variables. They are needed in the case of slowly decaying algebraic weights ($1<\alpha<2$). They are conveniently defined via their Laplace transform: For $t\geq0$,
\be \label{eq:laplace-stable}
	 \bbE\bigl[\e{- t Y}\bigr] =\exp\Bigl( \frac{1}{\alpha} \int_0^\infty \bigl( \e{ - t u} - 1 + t u)  \frac{\dd u}{u^{1+\alpha}}\Bigr) = \exp\Bigl( \frac{1}{\alpha} M(\alpha) t^\alpha\Bigr),
\ee
where $M(\alpha) = \int_0^\infty(\e{-y} -1 +y) y^{-1-\alpha} \dd y>0$. The random variable $Y$ has zero expectation $\bbE(Y) =0$; its right tail is $\bbP( Y\geq x) = (1+o(1)) \alpha^{-1} x^{-\alpha}$ as $x\to \infty$, and its left tail $\bbP(Y\leq -x)$ is exponentially small ~\cite{gnedenko-kolmogorov}.

Recall that $\rho_{\rm c} = \sum_{j\geq1} \theta_{j}$ and $\sigma_{\rm c}^{2} = \sum_{j\geq} j \theta_{j}$. The results from conditioned sums of random variables can be formulated in our context as follows.

\begin{theorem} [Nagaev~\cite{nagaev}, Armend{\'a}riz, Grosskinsky, Loulakis \cite{armendariz-loulakis,AGL}] \label{thm:homogen}
		Assume that $\rho>\rho_{\rm c}$.
	\begin{itemize}
		\item[(a)] If $\theta_j = j^{-\alpha}$ with $1<\alpha <2$, then 
			\[
				M_L = ( \rho- \rho_{\rm c}) L^d -  L^{d/\alpha} Y_L,
			\]
			where $Y_L$ converges in law to the $\alpha$-stable variable defined in~\eqref{eq:laplace-stable}. 	
		\item[(b)] If $\theta_j = j^{-\alpha}$ with $\alpha > 2$ or $\theta_j = \exp(- j^\gamma)$ with $0<\gamma<\frac{1}{2}$, then 
			\[
				M_L = ( \rho- \rho_{\rm c}) L^d + \sigma_{\rm c} L^{d/2}\, Y_L
			\]
			where $Y_L$ converges in law to a standard normal variable.  
		\item[(c)] If $\theta_j = j \exp(- j^\gamma)$ with $\frac{1}{2}\leq \gamma<1$, then  
			 \[
				M_L = ( \rho- \rho_{\rm c}) L^d  - \Delta_L + \sigma_{\rm c} L^{d/2}\,  Y_L 
			\]	
			where $Y_L$ converges in law to a standard normal variable, and $\Delta_L$ is the deterministic shift of order $L^{d\gamma}\geq L^{d/2}$ defined in \eqref{Delta min}. 
	\end{itemize} 
\end{theorem}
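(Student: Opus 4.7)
The plan is to reduce the canonical problem to a conditioned sum of iid random variables via the grand-canonical coupling~\eqref{eq:pln-cond}. Under $\bbP_L^{z_{\rm c}}$ the site occupation numbers $H_x$ are iid with mean $\rho_{\rm c}$ and tail $\bbP_L^{z_{\rm c}}(H_x = n) \sim \theta_n/n$ (by Theorem~\ref{thm buv}), and $\bbP_{L,n}$ is obtained by conditioning on $S_{L^d}:=\sum_x H_x = n = \rho L^d$. Because the excess $(\rho-\rho_{\rm c})L^d$ has to be realized by heavy-tailed or subexponential summands, the ``single big jump'' principle suggests that with high probability one site $x^*$ carries almost all of the excess while the remaining $L^d-1$ sites behave like typical iid variables.

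First I would show that with probability tending to one the argmax $x^*$ is unique and $M_L-(\rho-\rho_{\rm c})L^d$ is of smaller order than $L^d$. This is a consequence of the sharp asymptotics~\eqref{eq:nagaev} and~\eqref{eq:heavy-tailed}: the probability $\bbP_L^{z_{\rm c}}(S_{L^d}=n)$ is dominated by configurations in which a single site captures the excess, and by exchangeability of the $H_x$ the location $x^*$ is uniform on the box. Conditioning on $x^*$ and on $M_L=m$, the remaining $L^d-1$ occupation numbers are iid tilted by $\sum_{x\neq x^*} H_x = n-m$. Writing
\[
M_L - (\rho-\rho_{\rm c})L^d = \rho_{\rm c} - \Bigl( \sum_{x\neq x^*} H_x - \rho_{\rm c}(L^d-1) \Bigr),
\]
the fluctuations of $M_L$ around $(\rho-\rho_{\rm c})L^d$ are, up to the negligible additive constant $\rho_{\rm c}$, minus the fluctuations of a conditioned iid sum around its unconditioned mean. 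In case~(c) the conditioning forces that sum to sit slightly above $\rho_{\rm c}(L^d-1)$; the resulting shift is precisely the minimizer $\Delta_L$ of the free energy $f_L(\Delta)$ in~\eqref{eq:surface}, whose order $L^{d\gamma}$ is obtained by differentiating the truncated Cramér series~\eqref{eq:cramer}.

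Next I would apply the appropriate limit theorem to the non-condensate sum. In case~(a), the $H_x$ have infinite variance and lie in the domain of attraction of the $\alpha$-stable law~\eqref{eq:laplace-stable}; a local stable limit theorem for the unconditioned sum, combined with the fact that we condition on a value near the mean, produces a stable fluctuation at scale $L^{d/\alpha}$, giving $M_L-(\rho-\rho_{\rm c})L^d \sim -L^{d/\alpha}Y_L$. In case~(b), the variance $\sigma_{\rm c}^2$ is finite and the ordinary local central limit theorem yields a Gaussian limit at scale $L^{d/2}$. In case~(c), a moderate-deviation local limit theorem with Cramér corrections, as in Chapter~7 of~\cite{ibragimov-linnik}, gives a Gaussian limit around the shifted mean $\rho_{\rm c}(L^d-1)+\Delta_L$ at scale $L^{d/2}$.

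The main obstacle is the sharp quantitative control on the conditioned law of the non-condensate sites: proving (i) that the event of two or more sites carrying macroscopically large masses has vanishing probability (the single-big-jump principle in the conditioned setting), and (ii) a local limit theorem at the precision required to capture the Cramér series correction in case~(c). These are precisely the local large-deviation estimates underlying~\eqref{eq:nagaev} and~\eqref{eq:heavy-tailed} and established in~\cite{nagaev, armendariz-loulakis, AGL}; adapting them to the present setting amounts to a careful reworking of the Legendre-transform identity~\eqref{eq:cramer} and of the truncation of the divergent Cramér series.
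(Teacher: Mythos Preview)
The paper does not prove Theorem~\ref{thm:homogen}; it attributes the result to Nagaev~\cite{nagaev} and Armend\'ariz--Grosskinsky--Loulakis~\cite{armendariz-loulakis,AGL} and simply restates it in the present notation. The surrounding discussion (Eqs.~\eqref{eq:cramer}--\eqref{eq:heavy-tailed}) explains the heuristics and records the local large-deviation asymptotics that go into the cited proofs, but no argument is carried out.

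Your sketch correctly identifies the mechanism behind those references: the grand-canonical coupling turns $\bbP_{L,n}$ into a conditioning of an iid sum of heavy-tailed variables, the single-big-jump principle isolates one site carrying the excess, and the fluctuations of $M_L$ are then governed by the (possibly tilted) local limit theorem for the remaining $L^d-1$ summands --- stable in case~(a), Gaussian in case~(b), and Gaussian with a Cram\'er-series shift $\Delta_L$ in case~(c). This is indeed the route of~\cite{nagaev,armendariz-loulakis,AGL}. What you have written is an accurate high-level summary rather than a proof: the substantive work lies precisely in the two items you flag as obstacles, namely the quantitative single-big-jump statement under conditioning and the local limit theorem with the correct Cram\'er truncation for $\gamma\geq\tfrac12$. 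The paper does not supply those details either; it defers them to the cited literature.
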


The next proposition is a variant of the non-spatial Theorem~3.2  in~\cite{buv}.

\begin{proposition}  \label{prop:equivalence}
	Suppose that $\rho>\rho_{\rm c}$ and that the weights are from one of the cases in Theorem~\ref{thm:homogen}.  Let $(N_j)_{j\geq 1}$ be independent Poisson random variables with parameters $\theta_j /j $. Then $\sum_{j\geq 1} j N_j$ is almost surely  finite, and under $\bbP_{L,n}$ we have the convergence in law
	\be
		M_L - T_L \todi  \sum_{j\geq1} j N_j.
	\ee
\end{proposition}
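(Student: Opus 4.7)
The plan is to reduce to a non-spatial statement about Gibbs partitions $\nu_m$ and then lift via the product structure of Proposition~\ref{prop:conditional}(a). For $\lambda \sim \nu_m$ let $\tilde T(\lambda) = \max\{j : r_j(\lambda) \geq 1\}$; the non-spatial claim is that $m - \tilde T \todi \sum_{j\geq 1} j N_j$ as $m \to \infty$. To prove it, fix $\ell$ and let $m > 2\ell$. If $\tilde T = k$ with multiplicity $r_k = s \geq 1$, then $s k + \sum_{j<k} j r_j = m$ with $k = m - \ell$ forces $s = 1$ (otherwise $s k \geq 2(m-\ell) > m$), and the residual parts form an unconstrained partition of $\ell$. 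Hence
\begin{equation*}
\nu_m\bigl( m - \tilde T = \ell\bigr) = \frac{1}{h_m}\, \frac{\theta_{m-\ell}}{m-\ell}\, h_\ell.
\end{equation*}
Theorem~\ref{thm buv} applies to the weight families of Theorem~\ref{thm:homogen}, giving $h_m = (1+o(1))(\theta_m/m)\e{A}$ with $A = \sum_j \theta_j/j$, while regular variation yields $\theta_{m-\ell}/\theta_m \to 1$ for fixed $\ell$. Consequently $\nu_m(m - \tilde T = \ell) \to h_\ell \e{-A}$, which by the identity~\eqref{eq:levy} equals $\bbP\bigl(\sum_j j N_j = \ell\bigr)$. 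A.s.\ finiteness of the limit follows from $\bbE\bigl[\sum_j j N_j\bigr] = \sum_j \theta_j = \rho_{\rm c} < \infty$.

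For the spatial lift, fix $x^* \in \arg\max_x H_x$ (breaking ties arbitrarily) and set $\tilde T^* = \max\{j : r_{x^*,j} \geq 1\}$. By Proposition~\ref{prop:conditional}(a), $\lambda_{x^*}$ has law $\nu_{M_L}$ conditional on $\bseta$. Two points are needed: (i)~$T_L = \tilde T^*$ with probability tending to $1$, so that $M_L - T_L = M_L - \tilde T^* + o_P(1)$; and (ii)~the ability to integrate the conditional convergence against the law of $M_L$. Point~(i) holds because any part at a non-condensate site $x \neq x^*$ is bounded by the second-largest occupation $M_{L,2}$; passing to $\bbP_L^{z_{\rm c}}$ via~\eqref{eq:pln-cond} and applying standard extreme-value bounds to the i.i.d.\ $H_x$'s shows $M_{L,2} = o_P(L^d)$, whereas $\tilde T^* \geq M_L - O_P(1) \sim (\rho - \rho_{\rm c}) L^d$ by the non-spatial lemma. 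For~(ii), write $\bbP_{L,n}(M_L - \tilde T^* = \ell) = \bbE_{L,n}\bigl[\nu_{M_L}(M_L - \tilde T = \ell)\bigr]$ and invoke dominated convergence, using Theorem~\ref{thm:homogen} to ensure $M_L \to \infty$ in probability.

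The main obstacle lies in~(ii): the asymptotic $\nu_m(m - \tilde T = \ell) \to h_\ell \e{-A}$ must hold uniformly for $m$ in the typical range of $M_L$, an interval of width $L^{d/2}$ or $L^{d/\alpha}$ around $(\rho - \rho_{\rm c}) L^d$. This reduces to a quantitative form of Theorem~\ref{thm buv}, controlling $h_m/(\theta_m/m) - \e{A}$ to $o(1)$ uniformly in this range; for the weights at hand it can be extracted from a quantitative version of the Embrechts--Hawkes tail equivalence (Theorem~\ref{thm EH}).
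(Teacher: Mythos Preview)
Your approach is essentially the same as the paper's: your $\tilde T^*$ is the paper's $S_L$ (the largest part at the most occupied site), the key identity
\[
\nu_m\bigl(m-\tilde T=\ell\bigr)=\frac{h_\ell}{h_m}\,\frac{\theta_{m-\ell}}{m-\ell}\qquad(m>2\ell)
\]
is the same, and the reduction $T_L=\tilde T^*$ with high probability via a second-site bound is the same idea (the paper invokes the precise asymptotics~\eqref{eq:nagaev}--\eqref{eq:heavy-tailed} rather than generic extreme-value bounds, which is what one actually needs after undoing the conditioning).

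The one point worth correcting is your final paragraph: the ``main obstacle'' you raise is not real. Your own penultimate paragraph already contains the complete argument. Writing $f(m)=\nu_m(m-\tilde T=\ell)$, you have $f(m)\to h_\ell\,\e{-A}$ as $m\to\infty$ and $M_L\to\infty$ in probability (indeed $M_L/n\to 1-\rho_{\rm c}/\rho>0$). For any $\eps>0$ pick $m_0$ with $|f(m)-h_\ell\e{-A}|<\eps$ for $m\geq m_0$; then $\bbP(|f(M_L)-h_\ell\e{-A}|\geq\eps)\leq\bbP(M_L<m_0)\to0$, so $f(M_L)\to h_\ell\e{-A}$ in probability, and since $0\leq f\leq1$ bounded convergence finishes. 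No quantitative form of Theorem~\ref{thm buv} or uniformity over the fluctuation window is needed---only that $M_L$ eventually exceeds every fixed threshold. The paper phrases exactly this by splitting on $\{M_L\geq\delta n\}$ and observing that $\sup_{m\geq\delta n}\bigl|\tfrac{\theta_{m-k}}{m-k}\tfrac{h_k}{h_m}-h_k\e{-A}\bigr|\to0$, which is nothing more than the pointwise limit as $m\to\infty$.
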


The proof is given at the end of this section. Proposition~\ref{prop:equivalence} implies that for all  sequences $a_L\to \infty$, 
$
	\bbP_{L,n}\bigl( M_L - T_L \geq  a_L\bigr) \to 0.
$
Therefore $T_L$ and $M_L$ have the same fluctuations, and we have proved the following:

\begin{theorem} \label{thm:homogen-partition} 
	Theorem~\ref{thm:homogen} holds true for $T_L$ as well as $M_L$. 
\end{theorem}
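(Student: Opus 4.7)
The plan is to derive Theorem~\ref{thm:homogen-partition} from Theorem~\ref{thm:homogen} and Proposition~\ref{prop:equivalence} by a simple Slutsky-type argument, exploiting the fact that $M_L - T_L$ is tight while the fluctuation scale of $M_L$ diverges.

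First, I would record the key tightness consequence of Proposition~\ref{prop:equivalence}: since $M_L - T_L$ converges in distribution to the almost surely finite random variable $\sum_{j\geq 1} j N_j$, the family $\{M_L - T_L\}_L$ is tight, so $M_L - T_L = O_{\mathrm{P}}(1)$ under $\bbP_{L,n}$. In particular, for any deterministic sequence $a_L\to\infty$ we have $(M_L - T_L)/a_L \to 0$ in probability.

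Next, in each of the three cases of Theorem~\ref{thm:homogen}, I would write $T_L = M_L - (M_L-T_L)$ and rescale by the appropriate $a_L$:
\begin{itemize}
\item[(a)] With $a_L = L^{d/\alpha}$, set
\[
\tilde Y_L = - \frac{T_L - (\rho - \rho_{\rm c})L^d}{L^{d/\alpha}} = Y_L + \frac{M_L - T_L}{L^{d/\alpha}}.
\]
Since $Y_L$ converges in distribution to the $\alpha$-stable law from~\eqref{eq:laplace-stable} and the second term tends to $0$ in probability, Slutsky's theorem gives $\tilde Y_L\todi Y$.
\item[(b)] With $a_L = \sigma_{\rm c}L^{d/2}$, set
\[
\tilde Y_L = \frac{T_L - (\rho - \rho_{\rm c})L^d}{\sigma_{\rm c}L^{d/2}} = Y_L - \frac{M_L - T_L}{\sigma_{\rm c}L^{d/2}},
\]
and again $\tilde Y_L\todi \caN(0,1)$ by Slutsky.
\item[(c)] With $a_L = \sigma_{\rm c}L^{d/2}$, set
\[
\tilde Y_L = \frac{T_L - (\rho - \rho_{\rm c})L^d + \Delta_L}{\sigma_{\rm c}L^{d/2}} = Y_L - \frac{M_L - T_L}{\sigma_{\rm c}L^{d/2}},
\]
and once more $\tilde Y_L\todi \caN(0,1)$.
\end{itemize}

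This completes the proof modulo Proposition~\ref{prop:equivalence} and Theorem~\ref{thm:homogen}. Since there is really no obstacle at this stage --- the whole content sits in the two preceding results --- I expect the ``hard part'' to be conceptual rather than technical: one should justify, if only in a remark, that the deterministic shift $\Delta_L$ in case (c), which grows like $L^{d\gamma}$ with $\gamma<1$, is indeed larger than the $O_{\mathrm{P}}(1)$ gap between $M_L$ and $T_L$, so that the shift cannot be absorbed into the noise and must appear in the statement for $T_L$ as well. This is immediate from $\Delta_L \to \infty$ together with tightness of $M_L-T_L$, but it is the only place where one might otherwise worry about whether the two maxima share the same centering.
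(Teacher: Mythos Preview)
Your proposal is correct and follows exactly the paper's approach: the paper simply notes that Proposition~\ref{prop:equivalence} implies $\bbP_{L,n}(M_L - T_L \geq a_L)\to 0$ for every $a_L\to\infty$, so $T_L$ and $M_L$ have the same fluctuations, which is precisely your Slutsky argument spelled out in detail.
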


Thus we have proved a  limit law for the largest component of a Gibbs partition of $n$ with probability weights 
\be\label{eq:gibbs-ext}
	 \frac{1}{Z_{L,n}} \prod_{j\geq 1} \frac{1}{r_j!} \Bigl( L^d \frac{\theta_j}{j} \Bigr)^{r_j}
\ee
(see Proposition~\ref{prop:marginals}), and Eqs.~\eqref{eq:nagaev} and~\eqref{eq:heavy-tailed} provide, up to the factor $\exp (- L^d \sum_{j} \theta_j /j)$, the precise asymptotics of the partition function $Z_{L,n}$. 

From a mathematical point of view,  Theorem~\ref{thm:homogen-partition} is new\footnote{Bogachev and Zeindler prove the existence of a giant cycle of size of order $(1-\rho_{\rm c}/{\rho})n$ for a related model of random permutations, but without results on its fluctuations~\cite[Corollary 5.14]{BZ}.}  and it highlights  similarities between limit laws for the Gibbs partition~\eqref{eq:gibbs-ext} and sums of i.i.d.\ variables. These similarities might seem surprising at first, but we have given a natural explanation by providing a coupling (the measure $\bbP_{L,n}$) to  a model with  i.i.d.\ variables (the $H_x$s).

From a physical point of view, the coupling $\bbP_{L,n}$ establishes an explicit relation between stationary measures of the zero-range process on the one hand and the ideal Bose gas and classical droplet models on the other hand. The former is often used for driven systems that are \emph{out of equilibrium}, the latter belong to \emph{equilibrium} statistical mechanics. It is amusing to note, moreover, that the intricacies of the large deviations of stretched exponential variables with $\gamma \geq \frac{1}{2}$ have a natural physical interpretation in the droplet model, and the non-normal fluctuations  are related to the critical point of fluids.

Indeed, it is common to model the free energy of a droplet of volume $j$ as a bulk term proportional to $j$ plus a surface term $j^\gamma$ with $\gamma = (d-1) /d$~\cite{sator}. 
Assume that the bulk term is zero and set $\mu_{\rm sat} = 0$ (the chemical potential at which the gas saturates).
Then $p(\mu) = \sum_j (\theta_j/j) \e{\mu j}$ is the pressure of the gas, and Eq.~\eqref{eq:cramer} gives an asymptotic expansion of the free energy
\be \label{eq:feexp}
\begin{split}
	f(\rho) &= \sup_{\mu} \bigl( \mu \rho - p(\mu)\bigr) \\
	&= f(\rho_{\rm sat})+  \frac{1}{2\sigma^2} ( \rho- \rho_{\rm sat})^2 - \sum_{k\geq 0} \lambda_k ( \rho- \rho_{\rm sat})^{3+k},
	\end{split}
\ee
as $\rho \nearrow \rho_{\rm sat}$.
The variance $\sigma^2$ is proportional to the isothermal compressibility $\kappa_{\sf T}$ of the saturated gas~\cite{sator}: A formal manipulation in thermodynamics notation shows that
\be
	\frac{1}{\kappa_{\sf T}} = -  V\frac{\partial p}{\partial V} \Big|_{T} = V \frac{\partial^2}{\partial V^2} \bigl\{V f \bigl(\tfrac{N}{V}\bigr)\bigr\} = \rho^2 \frac{\partial^2 f}{\partial \rho^2}(\rho) 
\ee
which converges to $\frac{\rho_{\rm sat}^2}{\sigma^2}$ as $\rho\nearrow \rho_{\rm sat}$. Here, $V$ is the volume, $N=\rho V$ the particle number, and $F = V f(N/V)$ the extensive free energy.
 Eq.~\eqref{eq:surface} is up to an additive constant  the free energy of  supersaturated gas in coexistence with a large droplet; the shift $\Delta_L \propto L^{d-1} \propto n^{(d-1)/d}$ is natural because surface tension tends to make the condensate droplet smaller.  This gives a physical interpretation to the large deviations for stretched exponential variables with $\gamma  =(d-1)/d \geq 1/2$. 

The anomalous fluctuations arise when the compressibility $\kappa_{\sf T} \propto \sigma^2$ becomes infinite, i.e., at the critical point of the fluid. In the  Fisher droplet model, this is achieved by  assigning to each droplet a ``surface'' free energy more general than $j^{(d-1)/d}$, with temperature-dependent parameters; see the review~\cite{sator} and the references therein. 

\begin{proof} [Proof of Proposition~\ref{prop:equivalence}]
	Observe that for $k\geq0$,
	\be
		\bbP\bigl( \sum_{j\geq 1} j N_j = k\bigr) = h_k \e{-\sum_{j\geq 1} \theta_j /j }
	\ee
	(with $h_0=1$).
	By Eq.~\eqref{eq:levy} the right side adds up to $1$ when $k$ is summed over $k\geq 0$, so that $\sum_{j\geq 1} j N_j$ is almost surely finite. 
	Let $S_n$ be the size of the largest partition element, at the site of higher occupation. 
	Clearly $S_L \leq T_L \leq M_L$; we show below that $S_L = T_L$ with high probability.
	Fix $k \in \bbN_0$ and let $\delta>0$ with $\delta <1- \rho_{\rm c}/\rho$. Choose $n$ large enough so that $\delta n - k > k$. 
 Using Proposition~\ref{prop:conditional}, we get 
	\be
		\bbP_{L,n} \bigl( S_L = M_L - k\bigr)  = \sum_{m\geq \delta n} \frac{\theta_{m-k}}{m-k}  \frac{h_k}{h_m} \bbP_{L,n}\bigl( M_L = m\bigr) +  \bbP_{L,n} \bigl( S_L = M_L - k, M_L \leq \delta n\bigr),
	\ee
	hence 
	\begin{multline}
		\Bigl| \bbP_{L,n} \bigl( S_L = M_L - k\bigr) - \bbP\bigl(\sum_{j\geq 1} j N_j =k \bigr) \Big| \\
		\leq 2 \bbP_{L,n}\bigl( M_L \leq \delta n) + \sup_{m \geq \delta n} \Bigl| \frac{\theta_{m-k}}{m-k}  \frac{h_k}{h_m} - h_k \exp\Bigl(-\sum_{j\geq 1} \frac{\theta_j}{j} \Bigr) \Bigr|.
	\end{multline}
	The first term vanishes because $\frac{1}{n} M_L \to 1- \frac{\rho_{\rm c}}{\rho} >\delta$; the second term vanishes because $(\theta_j/j)$ is discrete subexponential (Theorems~\ref{thm EH} and~\ref{thm buv}). Thus we have shown that for all $k\geq0$,
	\be \label{eq:snlimit}
		\limtwo{n\to\infty}{\rho L^d = n} \bbP_{L,n} \bigl( S_L = M_L - k\bigr) = \bbP\bigl(\sum_{j\geq 1} j N_j =k \bigr),
	\ee
	that is, $M_L - S_L$ converges in law to $\sum_{j\geq 1} j N_j$. It follows in particular  that $\frac{1}{n} S_n \to 1- \frac{\rho_{\rm c}}{\rho}$. 
To conclude, we note that 
	since $S_L \leq T_L$, we have 
	\be
		\bbP_{L,n}\bigl(  T_L\neq S_L)  \leq  \bbP_{L,n}\bigl(  T_L>S_L \mid S_L \geq \delta n \bigr) + \bbP_{L,n} \bigl( S_L <\delta n\bigr).
	\ee
	The last term vanishes because $\frac{1}{n} S_L \to 1- \frac{\rho_{\rm c}}{\rho}$ and $\delta < 1-\rho_{\rm c}/\rho$. The first term in the right side is bounded by the conditional probability that there is a second site, in addition to the most occupied one, whose occupation is at least $\delta n$.  With the help of Eqs.~\eqref{eq:nagaev} and~\eqref{eq:heavy-tailed} one can show that the probability of the latter event goes to zero. Hence $\bbP_{L,n}(T_L \neq S_L) \to 0$ and we conclude with the help of Eq.~\eqref{eq:snlimit}.
\end{proof} 

In forthcoming work \cite{ercolani-jansen-ueltschi} we will introduce analytic continuations, known as {\it Lindel\"of integrals}, of the grand canonical ensembles studied in this paper and, by methods of Hardy space analysis and contour deformation, show how the results of this section may be extended to a broader class of decaying weights in which the weights $\theta_j$ arise as the values, at positive integers $j$, of a function $p(\xi)$ analytic for $\xi \in \mathbb{C} \setminus (-\infty, 0]$.

\medskip
{\footnotesize
{\bf Acknowledgments:}
We thank the Laboratoire Jean Dieudonn\'e of the University of Nice, and the Institut Henri Poincar\'e (during the program of the spring 2013 organized by M. Esteban and M. Lewin) for kindly hosting us. S.J. is grateful for helpful discussions on random partitions and heavy-tailed variables with J. Blath and M. Scheutzow, and we acknowledge useful discussions with S. Grosskinsky on the zero-range process. The referees made useful comments.
The authors gratefully acknowledge the support of NSF grant DMS-1212167 (N.M.E.),  EPSRC grant EP/G056390/1 (D.U.), and ERC advanced grant 267356 VARIS  of F. den Hollander (S.J.).

\end{document}